\newtheorem{theorem}{Theorem}[section]
\newtheorem{lemma}[theorem]{Lemma}
\newtheorem{proposition}[theorem]{Proposition}
\newtheorem{corollary}[theorem]{Corollary}
\newtheorem{claim}[theorem]{Claim}
\theoremstyle{definition}
\newtheorem{definition}[theorem]{Definition}
\newtheorem{example}[theorem]{Example}
\newtheorem{remark}[theorem]{Remark}
\newenvironment{proofclaim}{\paragraph{\emph{Proof of the Claim}.}}{\hfill$\qed$\\}
\newcommand{\thd}{{\twoheaddownarrow}}
\newcommand{\thu}{{\twoheaduparrow}}
\def\int{{\sf int}}
\newcommand\cl{{\sf cl}}
\newcommand{\dv}[1]{\mathfrak{#1}}
\newcommand\RO{\mathcal{RO}}
\newcommand\dev{{\sf DeV}}
\newcommand\deve{{\sf DeVe}}
\newcommand\ldeve{{\sf LDeVe}}
\newcommand\ndeve{{\sf NDeVe}}
\newcommand\Mdeve{{\sf MDeVe}}
\newcommand\Cdeve{{\sf CDeVe}}
\newcommand\C{{\sf Comp}}
\newcommand\creg{{\sf CReg}}
\newcommand\norm{{\sf Norm}}
\newcommand\KHaus{{\sf KHaus}}
\newcommand\LKHaus{{\sf LKHaus}}
\newcommand\Dim{{\sf Dim}}
\newcolumntype{x}[1]{>{\raggedright\arraybackslash}p{#1}}
\begin{document}

\title{De Vries duality for normal spaces and locally compact Hausdorff spaces}
\author{G.~Bezhanishvili, P.~J.~Morandi, B.~Olberding}
\date{}

\subjclass[2010]{54D15; 54D30; 54D45; 54E05}
\keywords{Compact Hausdorff space; normal space; locally compact space; proximity; de Vries duality}

\begin{abstract}
By de Vries duality, the category of compact Hausdorff spaces is dually equivalent to the category of de Vries algebras. In our recent article, we have extended de Vries duality to completely regular spaces by generalizing de Vries algebras to de Vries extensions. To illustrate the utility of this point of view, we show how to use this new duality to obtain algebraic counterparts of normal and locally compact Hausdorff spaces in the form of de Vries extensions that are subject to additional axioms which encode the desired topological property. This, in particular, yields a different perspective on de Vries duality. As a further application, we show that a duality for locally compact Hausdorff spaces due to Dimov can be obtained from our approach.
\end{abstract}

\maketitle

\section{Introduction}

It is a well-known theorem of Smirnov that compactifications of a completely regular space $X$ can be described ``internally" by means of proximities
on $X$ compatible with the topology on $X$ (see, e.g., \cite[Sec.~7]{NW70}), where a proximity is a binary relation on the powerset of $X$ satisfying
certain natural axioms, including a point-separation axiom (see, e.g., \cite[Sec.~3]{NW70}). De Vries takes this further in \cite{deV62} by axiomatizing
the proximities on the complete Boolean algebra of regular open subsets of $X$ that correspond to compactifications of $X$. In de Vries' work, the
point-separation axiom
is replaced by the point-free axiom asserting that the proximity relation is approximating. This point-free axiom decouples the proximity from the
underlying space and yields what is known today as a de Vries algebra: a complete Boolean algebra with a binary relation satisfying all of the axioms
of a proximity except the point-separation axiom, which is replaced by de Vries' point-free axiom. De Vries showed that this axiomatization can be
used to give an algebraic description of the category ${\KHaus}$ of compact Hausdorff spaces. More formally, the category ${\KHaus}$ is dually
equivalent to the category $\dev$ of de Vries algebras.

In \cite{BMO18a} we extended de Vries duality to completely regular spaces by generalizing the notion of a de Vries algebra to that of a de Vries
extension.
While de Vries duality alone is not enough to deal with completely regular spaces and de Vries extensions, we show in \cite{BMO18a} that
de Vries duality together with Tarski duality for complete and atomic Boolean algebras provides an appropriate framework for dealing with completely
regular spaces. In fact, the methods in \cite{BMO18a} yield a dual equivalence between the category of de Vries extensions and the category
of compactifications of completely regular spaces that extends both de Vries duality and Tarski duality. The de Vries extensions corresponding
to Stone-\v{C}ech compactifications are axiomatized in \cite{BMO18a} as ``maximal'' de Vries extensions. This in turn yields a dual equivalence
between the category of completely regular spaces and the category of maximal de Vries extensions, thereby providing an algebraic counterpart
to completely regular spaces.

It is noted in \cite{BMO18a} that discrete spaces can be viewed algebraically  as trivial de Vries extensions. The interpretation of more
interesting classes of completely regular spaces is not as straightforward. In this article, we continue our work begun in \cite{BMO18a}
by giving algebraic interpretations of normal spaces and locally compact Hausdorff spaces
within our framework of de Vries extensions. On a technical level, this involves, for a compactification $e:X \rightarrow Y$ of a completely
regular space $X$, a close analysis of the corresponding de Vries extension $e^{-1}:(\RO(Y), \prec) \rightarrow (\wp(X),\subseteq)$, where $\RO(Y)$
is the complete Boolean algebra of regular open sets of $Y$ and $\wp(X)$ is the powerset of $X$. Some of the main results of the current paper
involve determining which algebraic properties of the map $e^{-1}$ reflect the normality
and local compactness of $X$. With these characterizations, we obtain dual equivalences between the categories of such spaces and the appropriate
full subcategories of the category of de Vries extensions.

The article is organized as follows. 
In Section~2 we recall all the necessary background about de Vries algebras, de Vries extensions, and maximal
de Vries extensions. In Section~3 we introduce normal de Vries extensions and show that every normal de Vries extension is maximal. This allows us
to view normal de Vries extensions as a full subcategory $\ndeve$ of the category $\Mdeve$ of maximal de Vries extensions. We prove that $\ndeve$
is dually equivalent to the category $\norm$ of normal spaces. In Section~4 we introduce locally compact de Vries extensions. While not every locally
compact de Vries extension is maximal, we prove that $\LKHaus$ is dually equivalent to the full subcategory $\ldeve$ of $\Mdeve$ consisting of locally
compact de Vries extensions.

In Section~5 we introduce compact de Vries extensions. Since every compact de Vries extension is maximal, we view compact de Vries extensions as a
full subcategory $\Cdeve$ of $\Mdeve$. We prove that $\Cdeve$ is equivalent to $\dev$, and hence dually equivalent to $\KHaus$. This gives another
perspective on de Vries duality. In Section~6 we introduce minimal de Vries extensions and show that non-compact minimal de Vries extensions 
correspond to one-point compactifications of non-compact locally compact Hausdorff spaces.

While we do not make use of it directly, we are motivated in our treatment of local compactness by Leader's generalization in \cite{Lea67} of
a proximity relation to that of a local proximity relation, a generalization that yields a description of the local compactifications of a
completely regular space by means of local proximity relations compatible with the topology. Recently, Dimov \cite{Dim10} has recast Leader's
work in a setting similar to de Vries algebras, and obtained a duality theorem for the category $\LKHaus$ of locally compact Hausdorff spaces
that generalizes de Vries duality. In Section~7 we show that Dimov's duality
for $\LKHaus$ can be derived as a consequence of our duality for $\LKHaus$.

\section{De Vries extensions}

In this preliminary section we recall de Vries algebras, de Vries extensions, and maximal de Vries extensions. By de Vries duality \cite{deV62}, de Vries algebras provide an algebraic counterpart of compact Hausdorff spaces. By the duality developed in \cite{BMO18a}, de Vries extensions provide an algebraic counterpart of compactifications of completely regular spaces. Under this duality, maximal de Vries extensions correspond to Stone-\v{C}ech compactifications, thus yielding a duality for completely regular spaces that generalizes de Vries duality for compact Hausdorff spaces.

\subsection{De Vries algebras and compact Hausdorff spaces}

We start by recalling de Vries algebras and de Vries morphisms.

\begin{definition}\label{def: DeV}
\begin{enumerate}
\item[]
\item A \emph{de Vries algebra} is a pair $\mathfrak A=(A,\prec)$, where $A$ is a complete Boolean algebra and $\prec$ is a
binary relation on $A$ satisfying the following axioms:
\begin{enumerate}
\item[(DV1)] $1\prec 1$.
\item[(DV2)] $a\prec b$ implies $a\le b$.
\item[(DV3)] $a\le b\prec c\le d$ implies $a\prec d$.
\item[(DV4)] $a\prec b,c$ implies $a\prec b\wedge c$.
\item[(DV5)] $a\prec b$ implies $\neg b\prec \neg a$.
\item[(DV6)] $a\prec b$ implies there is $c$ such that $a\prec c\prec b$.
\item[(DV7)] $b = \bigvee\{a\in A \mid a\prec b\}$.
\end{enumerate}
\item A \emph{de Vries morphism} is a map $\rho: \dv{A} \to \dv{B}$ between de Vries algebras satisfying the following axioms:
\begin{enumerate}
\item[(M1)] $\rho(0)=0$.
\item[(M2)] $\rho(a\wedge b)=\rho(a)\wedge\rho(b)$.
\item[(M3)] $a\prec b$ implies $\neg\rho(\neg a)\prec\rho(b)$.
\item[(M4)] $\rho(b) = \bigvee \{\rho(a)\mid a\prec b\}$.
\end{enumerate}
\end{enumerate}
\end{definition}

De Vries algebras and de Vries morphisms form a category $\dev$ where the composition of two de Vries morphisms $\rho_1:\dv{A}_1\to \dv{A}_2$ and $\rho_2: \dv{A}_2 \to \dv{A}_3$ is defined by
\[
(\rho_2 \star \rho_1)(b)=\bigvee\{\rho_2\rho_1(a)\mid  a\prec b\}.
\]

De Vries algebras arise naturally from compact Hausdorff spaces. If $X$ is a compact Hausdorff space, then the pair $X^*=(\mathcal{RO}(X),\prec)$ is a de Vries algebra, where $\mathcal{RO}(X)$ is the complete Boolean algebra of regular open subsets of $X$ and $\prec$ is the canonical proximity relation on $\mathcal{RO}(X)$ given by
\[
U\prec V \mbox{ iff } {\cl}(U)\subseteq V.
\]
Similarly, de Vries morphisms arise naturally from continuous maps between compact Hausdorff spaces. If $f:X\to Y$ is such a map, then $f^*:\mathcal{RO}(Y)\to\mathcal{RO}(X)$ is a de Vries morphism, where $f^*$ is given by
\[
f^*(U)={\int}\left({\cl}\left(f^{-1}(U)\right)\right).
\]
This defines a contravariant functor $(-)^*:\KHaus\to\dev$. To define a contravariant functor $(-)_*:\dev\to\KHaus$, let $\dv{A}$ be a de Vries algebra. For $S\subseteq \dv{A}$, let
$$
\thu S = \{a\in \dv{A} \mid  b\prec a \mbox{ for some } b\in S\}
$$
and
$$
\thd S = \{a\in \dv{A} \mid  a\prec b \mbox{ for some } b\in S\}.
$$
A filter $F$ of $\dv{A}$ is \emph{round} if $\thu F=F$. Similarly, an ideal $I$ of $\dv{A}$ is \emph{round} if $\thd I=I$.

An \emph{end} of $\dv{A}$ is a maximal proper round filter. Let $Y_\dv{A}$ be the set of ends of $\dv{A}$.
For $a\in \dv{A}$, set
$$
\zeta_\dv{A}(a)=\{x\in Y_\dv{A} \mid a\in x\},
$$
and define a topology on $Y_\dv{A}$ by letting
$$
\zeta_\dv{A}[\dv{A}]=\{\zeta_\dv{A}(a) \mid a\in \dv{A}\}
$$
be a basis for the topology. Then $Y_\dv{A}$ is compact Hausdorff, and we set $\dv{A}_*=Y_\dv{A}$. For a de Vries morphism $\rho : \dv{A} \to \dv{A}'$, define $\rho_* : Y_\dv{A'} \to Y_\dv{A}$ by $\rho_*(y) = \thu \rho^{-1}(y)$. Then $\rho_*$ is a well-defined continuous map, yielding a contravariant functor $(-)_* : \dev \to \KHaus$.

We have that $\zeta_{\dv{A}} : \dv{A} \to (\dv{A}_*)^* $ is a de Vries isomorphism, and $\xi_X : X \to (X^*)_*$ is a homeomorphism, where $\xi(x) = \{ U \in X^* \mid x \in U\}$. Therefore, $\zeta : 1_\dev \to (-)^* \circ (-)_*$ and $\xi : 1_\KHaus \to (-)_* \circ (-)^*$ are natural isomorphisms. Thus, we arrive at de Vries duality.

\begin{theorem}
{\em (de Vries \cite{deV62})} $\KHaus$ is dually equivalent to $\dev$.
\end{theorem}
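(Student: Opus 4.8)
The plan is to assemble the dual equivalence from the data already displayed before the statement: the contravariant functors $(-)^*\colon\KHaus\to\dev$ and $(-)_*\colon\dev\to\KHaus$, together with the candidate natural transformations $\zeta\colon 1_\dev\to(-)^*\circ(-)_*$ and $\xi\colon 1_\KHaus\to(-)_*\circ(-)^*$. Concretely, the theorem reduces to four things: (i) $(-)^*$ is a well-defined contravariant functor; (ii) $(-)_*$ is a well-defined contravariant functor; (iii) each $\zeta_\dv{A}$ is a de Vries isomorphism and the family $\{\zeta_\dv{A}\}$ is natural in $\dv{A}$; (iv) each $\xi_X$ is a homeomorphism and $\{\xi_X\}$ is natural in $X$. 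Granting all four, a contravariant functor one of whose unit/counit transformations is a natural isomorphism of objects and which is full and faithful is (one half of) a dual equivalence, so we are done.

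For (i) one checks that for compact Hausdorff $X$ the pair $(\RO(X),\prec)$ with $U\prec V\iff\cl(U)\subseteq V$ satisfies DV1--DV7: completeness of $\RO(X)$ is standard, DV2--DV5 are immediate, DV6 (interpolation) is exactly normality of $X$, and DV7 follows from regularity; that $f^*(U)=\int(\cl(f^{-1}(U)))$ satisfies M1--M4 (M2 uses that $f^{-1}$ preserves intersections together with a short closure/interior computation, and M4 again uses regularity); and that functoriality holds with respect to the twisted composition $\star$. The last point is the only non-mechanical one here: $f^*$ is not a Boolean homomorphism (it need not preserve joins or complements), which is precisely why $(g\circ f)^*=f^*\star g^*$ rather than $f^*\circ g^*$, so the identity $(g\circ f)^*(U)=\bigvee\{f^*g^*(V)\mid V\prec U\}$ must be verified directly. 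For (ii) the substantive points are that $Y_\dv{A}$ with the $\zeta_\dv{A}[\dv{A}]$-basis is compact Hausdorff --- the sets $\zeta_\dv{A}(a)$ form a basis closed under finite intersections by DV4, Hausdorffness follows from maximality of ends together with DV6 (given ends $x\neq y$, separate a witnessing pair $a\prec b$), and compactness is an Alexander-subbase/round-filter argument --- and that for a de Vries morphism $\rho$ the assignment $\rho_*(y)=\thu\rho^{-1}(y)$ lands in ends, is continuous, and respects $\star$.

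For (iii) and (iv): $\zeta_\dv{A}$ is a Boolean homomorphism into $\RO(Y_\dv{A})$ that both preserves and reflects $\prec$ (using DV6), and it is injective because ends separate elements (for $a\not\le b$ use DV7 to obtain $c\prec a$ with $c\not\le b$, then extend the round filter $\thu\{c\}$ to an end). The main obstacle I anticipate is surjectivity of $\zeta_\dv{A}$ onto $\RO(Y_\dv{A})$: an arbitrary regular open set is a union of basic opens $\zeta_\dv{A}(a)$, and one must show that when such a union is regular open it equals $\zeta_\dv{A}$ of the corresponding join in $\dv{A}$ --- this is where the round-filter machinery and DV7 do real work. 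For $\xi_X$ one checks that $\{U\in\RO(X)\mid x\in U\}$ really is an end (roundness is normality; maximality because a proper round extension would contain some $U$ with $x\notin U$ and, by roundness, the complement $\neg U$, which then also contains $x$, a contradiction); injectivity is Hausdorffness of $X$; and the second genuine obstacle is surjectivity of $\xi_X$, i.e.\ that every end of $\RO(X)$ is the neighbourhood end of a point. Here compactness of $X$ is essential: for an end $x$ the closed sets $\{\cl(U)\mid U\in x\}$ have the finite intersection property, so their intersection contains a point $p$, and one verifies $\xi_X(p)=x$.

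Finally, continuity and openness of $\xi_X$ and $\zeta_\dv{A}^{-1}$ are direct from the definitions of the basic open sets, and the two naturality squares amount to routine diagram chases comparing $f^{*}$ against $(\xi_Y)\!\circ\!(\dots)$ and $\rho$ against $\zeta$. Thus $\zeta$ and $\xi$ are natural isomorphisms, $(-)^*$ and $(-)_*$ are mutually inverse up to natural isomorphism, and $\KHaus$ is dually equivalent to $\dev$. I expect essentially all the difficulty to be concentrated in the two surjectivity claims (every regular open of $Y_\dv{A}$ comes from $\dv{A}$; every end of $\RO(X)$ comes from a point of $X$); the rest is axiom-pushing.
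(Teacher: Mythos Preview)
Your proposal is correct and follows precisely the approach the paper outlines: the paper does not give its own proof of this theorem but simply records the functors $(-)^*$, $(-)_*$ and asserts that $\zeta$ and $\xi$ are natural isomorphisms before citing de~Vries for the result. Your plan fleshes out exactly this skeleton, and your identification of the two substantive points (surjectivity of $\zeta_\dv{A}$ onto $\RO(Y_\dv{A})$ and surjectivity of $\xi_X$ onto the ends of $\RO(X)$) is on target.
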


\subsection{De Vries extensions and compactifications}

We next generalize de Vries algebras to de Vries extensions \cite{BMO18a}. For this we will utilize Tarski duality between the category $\sf CABA$ of complete and atomic Boolean algebras with complete Boolean homomorphisms and the category $\sf Set$ of sets and functions. If $X$ is a set, then $\wp(X)$ is a complete and atomic Boolean algebra, and if $f : X \to Y$ is a function, then $f^{-1} : \wp(Y) \to \wp(X)$ is a complete Boolean homomorphism. This yields a contravariant functor $ {\sf Set} \to {\sf CABA}$. Going backwards, for a complete and atomic Boolean algebra $\dv{B}$, let $X_\dv{B}$ be the set of atoms of $\dv{B}$, and for a complete Boolean homomorphism $\sigma : \dv{B}_1 \to \dv{B}_2$, let $\sigma_+ : X_{\dv{B}_2} \to X_{\dv{B}_1}$ be given by $\sigma_+(x) = \bigwedge \{ b \in \dv{B}_1 \mid x \le \sigma(b) \}$. It is well known that $\sigma_+$ is a well-defined function, yielding a contravariant functor ${\sf CABA} \to {\sf Set}$. For each set $X$, we have a natural isomorphism $\eta_X : X \to X_{\wp(X)}$,
given by $\eta_X(x) = \{x\}$ for each $x \in X$; and for each $\dv{B} \in \sf CABA$, we have a natural isomorphism $\vartheta_\dv{B} : \dv{B} \to \wp(X_\dv{B})$, given by $\vartheta_\dv{B}(b) = \{ x \in X_\dv{B} \mid x \le b\}$.

\begin{definition}
\begin{enumerate}
\item[]
\item A de Vries algebra $\mathfrak A=(A,\prec)$ is \emph{extremally disconnected} if $a\prec b$ iff $a\le b$.
\item A de Vries algebra $\mathfrak A=(A,\prec)$ is \emph{atomic} if $A$ is atomic as a Boolean algebra.
\item A \emph{de Vries extension} is a 1-1 de Vries morphism $\alpha: \dv{A}\to \dv{B}$ such that $\dv{A}$ is a de Vries algebra, $\dv{B}$ is an atomic extremally disconnected de Vries algebra, and $\alpha[\dv{A}]$ is join-meet dense\footnote{That is, each element of $\dv{B}$ is a join of meets of elements of $\alpha[\dv{A}]$. This is equivalent to each element of $\dv{B}$ being a meet of joins of elements of $\alpha[\dv{A}]$ (see \cite[Rem.~4.7]{BMO18a}).} in $\dv{B}$.
\end{enumerate}
\end{definition}

A morphism between de Vries extensions $\alpha: \dv{A}\to \dv{B}$ and $\alpha': \dv{A'}\to \dv{B'}$ is a pair $(\rho, \sigma)$, where $\rho:\dv{A}\to\dv{A'}$ is a de Vries morphism, $\sigma:\dv{B}\to\dv{B'}$ is a complete Boolean homomorphism, and $\sigma\circ\alpha = \alpha' \star \rho$.
\[
\xymatrix@C5pc{
\dv{A} \ar[r]^{\alpha} \ar[d]_{\rho} & \dv{B} \ar[d]^{\sigma} \\
\dv{A'} \ar[r]_{\alpha'} & \dv{B'}
}
\]
Since $\sigma$ is a complete Boolean homomorphism and $\dv{B}$, $\dv{B}'$ are extremally disconnected, $\sigma$ is a de Vries morphism and $\sigma \star \alpha = \sigma \circ \alpha$; hence, if $(\rho, \sigma)$ is a morphism in $\deve$, then the diagram above commutes in $\dev$ (see \cite[Rems.~2.6, 4.10]{BMO18a}).

The composition of two morphisms $(\rho_1, \sigma_1)$ and $(\rho_2, \sigma_2)$ is defined as $(\rho_2\star \rho_1, \sigma_2\circ \sigma_1)$.
\[
\xymatrix@C5pc{
\dv{A_1} \ar@/_1.5pc/[dd]_{\rho_2\star\rho_1}\ar[r]^{\alpha_1} \ar[d]^{\rho_1} & \dv{B}_1 \ar[d]_{\sigma_1} \ar@/^1.5pc/[dd]^{\sigma_2\circ\sigma_1} \\
\dv{A}_2 \ar[r]^{\alpha_2} \ar[d]^{\rho_2} & \dv{B}_2 \ar[d]_{\sigma_2} \\
\dv{A}_3 \ar[r]_{\alpha_3} & \dv{B}_3
}
\]
It is straightforward to see that de Vries extensions with morphisms between them form a category, which we denote $\deve$.

De Vries extensions arise naturally from compactifications of completely regular spaces. Let $e : X \to Y$ be a compactification of a completely regular space $X$. Then $(\RO(Y), \prec)$ is a de Vries algebra, the powerset $(\wp(X), \subseteq)$ is an atomic extremally disconnected de Vries algebra, and the pullback map $e^{-1} : \RO(Y) \to \wp(X)$ is a de Vries extension.

Let $\C$ be the category whose objects are compactifications $e:X\to Y$ and whose morphisms are pairs $(f,g)$ of continuous maps such that the following diagram commutes.
\[
\xymatrix@C5pc{
X \ar[r]^{e} \ar[d]_{f} & Y \ar[d]^{g} \\
X' \ar[r]_{e'} & Y'
}
\]
The composition of two morphisms $(f_1,g_1)$ and $(f_2,g_2)$ in $\C$ is $(f_2\circ f_1, g_2\circ g_1)$.
\[
\xymatrix@C5pc{
X_1 \ar[r]^{e_1} \ar@/_1.5pc/[dd]_{f_2\circ f_1} \ar[d]^{f_1} & Y_1 \ar[d]_{g_1} \ar@/^1.5pc/[dd]^{g_2\circ g_1}\\
X_2 \ar[r]^{e_2} \ar[d]^{f_2} & Y_2 \ar[d]_{g_2} \\
X_3 \ar[r]_{e_3} & Y_3
}
\]

For a morphism $(f,g)$ in $\C$, the pair $(g^*,f^{-1})$ is a morphism in $\deve$, where $g^*$ is the de Vries dual of $g$.
\[
\xymatrix@C4pc{
\RO(Y') \ar[d]_{g^*} \ar[r]^{(e')^{-1}} & \wp(X') \ar[d]^{f^{-1}}\\
\RO(Y) \ar[r]_{e^{-1}} & \wp(X)
}
\]
This yields a contravariant functor ${\sf E}:\C\to\deve$. To define a contravariant functor ${\sf C}:\deve\to\C$, let $\alpha:\dv{A}\to\dv{B}$ be a de Vries extension. Let $X_\dv{B}$ be the set of atoms of $\dv{B}$. For $b\in X_\dv{B}$, we have ${\uparrow}b$ is an ultrafilter of $\dv{B}$, and we define $\alpha_*:X_\dv{B}\to Y_\dv{A}$ by
\[
\alpha_*(b)=\thu\alpha^{-1}({\uparrow}b).
\]
We can view $X_\dv{B}$ as a subset of $Y_\dv{B}$ by sending $b$ to ${\uparrow}b$. Then we can think of $\alpha_*$ as the restriction to $X_\dv{B}$ of the de Vries dual $\alpha_* : Y_\dv{B} \to Y_\dv{A}$. By \cite[Lem.~5.4]{BMO18a}, $\alpha_*$ is 1-1. Let $\tau_\alpha$ be the least topology on $X_\dv{B}$ making $\alpha_*$ continuous. By \cite[Thm.~5.7]{BMO18a}, $\alpha_*:X_\dv{B}\to\dv{A}$ is a compactification. For a morphism $(\rho, \sigma)$ in $\deve$
\[
\xymatrix@C5pc{
\dv{A} \ar[r]^{\alpha} \ar[d]_{\rho} & \dv{B} \ar[d]^{\sigma} \\
\dv{A}' \ar[r]_{\alpha'} & \dv{B}'
}
\]
the pair $(\sigma_+, \rho_*)$ is a morphism in $\C$,
\[
\xymatrix@C5pc{
X_{\dv{B'}} \ar[r]^{\alpha'_*} \ar[d]_{\sigma_+} & Y_\dv{A'} \ar[d]^{\rho_*} \\
X_{\dv{B}} \ar[r]_{\alpha_*} & Y_\dv{A}
}
\]
where $\rho_*$ is the de Vries dual of $\rho$ and $\sigma_+$ is the Tarski dual of $\sigma$.
This yields a contravariant functor ${\sf C}:\deve\to\C$, and the functors $\sf E$ and $\sf C$ establish a dual equivalence between $\C$ and $\deve$:

\begin{theorem} \label{thm: duality}
\cite[Thm.~5.9]{BMO18a} $\C$ is dually equivalent to $\deve$.
\end{theorem}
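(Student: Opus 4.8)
The plan is to show that the contravariant functors ${\sf E} : \C \to \deve$ and ${\sf C} : \deve \to \C$ described above are quasi-inverse to one another, by exhibiting natural isomorphisms $\varepsilon : 1_\C \to {\sf C}\circ{\sf E}$ and $\delta : 1_\deve \to {\sf E}\circ{\sf C}$ assembled from the isomorphisms of de Vries duality ($\zeta,\xi$) and of Tarski duality ($\eta,\vartheta$) already in hand. En route one records that ${\sf E}$ and ${\sf C}$ are genuinely (contravariantly) functorial: ${\sf E}$ respects composition because $(g_2\circ g_1)^* = g_1^*\star g_2^*$ (de Vries duality) and $(f_2\circ f_1)^{-1} = f_1^{-1}\circ f_2^{-1}$, and dually for ${\sf C}$ using $\rho_*(y) = \thu\rho^{-1}(y)$, $(\rho_2\star\rho_1)_* = (\rho_1)_*\circ(\rho_2)_*$, and functoriality of the Tarski dual on ${\sf CABA}$.

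For $\varepsilon$, fix a compactification $e : X \to Y$. Since $Y$ is compact Hausdorff, ${\sf E}(e)$ is the de Vries extension $e^{-1} : Y^* \to \wp(X)$, and ${\sf C}{\sf E}(e) = (e^{-1})_* : X_{\wp(X)} \to (Y^*)_*$, the source carrying the topology $\tau_{e^{-1}}$. Here $\xi_Y : Y \to (Y^*)_*$ is a homeomorphism and $\eta_X : X \to X_{\wp(X)}$ is a bijection, so I would take $\varepsilon_e = (\eta_X,\xi_Y)$ and check it is a morphism of $\C$, i.e.\ $(e^{-1})_*\circ\eta_X = \xi_Y\circ e$. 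By the defining formula for $\alpha_*$ on atoms, $(e^{-1})_*(\{x\}) = \thu\{U\in\RO(Y)\mid e(x)\in U\}$; and since $Y$ is compact Hausdorff the set $\{U\mid e(x)\in U\}$ is already a round filter, so $(e^{-1})_*(\{x\}) = \{U\mid e(x)\in U\} = \xi_Y(e(x))$. Because $\tau_{e^{-1}}$ is the least topology making $(e^{-1})_*$ continuous, $\xi_Y$ is a homeomorphism, and $e$ is a topological embedding, the bijection $\eta_X$ carries $\tau_{e^{-1}}$ to the original topology of $X$; hence $\varepsilon_e$ is an isomorphism in $\C$. Naturality of $\varepsilon$ at a morphism $(f,g)$ of $\C$ then reduces to naturality of $\eta$ (for ${\sf Set}\leftrightarrow{\sf CABA}$) and of $\xi$ (for $\KHaus\leftrightarrow\dev$), both already established.

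For $\delta$, fix a de Vries extension $\alpha : \dv{A}\to\dv{B}$. Then ${\sf C}(\alpha) = \alpha_* : X_{\dv{B}}\to\dv{A}_*$ and ${\sf E}{\sf C}(\alpha) = (\alpha_*)^{-1} : \RO(\dv{A}_*)\to\wp(X_{\dv{B}})$, while $\zeta_{\dv{A}} : \dv{A}\to(\dv{A}_*)^*$ is an isomorphism in $\dev$ and $\vartheta_{\dv{B}} : \dv{B}\to\wp(X_{\dv{B}})$ is an isomorphism in ${\sf CABA}$. I would take $\delta_\alpha = (\zeta_{\dv{A}},\vartheta_{\dv{B}})$; the only thing to check is that this pair is a morphism of $\deve$, i.e.\ that $\vartheta_{\dv{B}}\circ\alpha = (\alpha_*)^{-1}\star\zeta_{\dv{A}}$, which on unwinding the $\star$-composition amounts to the identity
\[
(\alpha_*)^{-1}(\zeta_{\dv{A}}(a)) = \vartheta_{\dv{B}}(\alpha(a)) \qquad \text{for all } a\in\dv{A}.
\]
The left-hand side equals $\{b\in X_{\dv{B}}\mid a\in\alpha_*(b)\} = \{b\mid a\in\thu\alpha^{-1}({\uparrow}b)\} = \{b\mid b\le\alpha(c)\text{ for some }c\prec a\}$, and the right-hand side equals $\{b\in X_{\dv{B}}\mid b\le\alpha(a)\}$; so the displayed identity is precisely the statement that, for an atom $b$ of $\dv{B}$, one has $b\le\alpha(a)$ iff $b\le\alpha(c)$ for some $c\prec a$. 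This is the step I expect to be the main obstacle; everything else is diagram chasing. The inclusion $\supseteq$ is easy, as $c\prec a$ implies $c\le a$ and hence $\alpha(c)\le\alpha(a)$. For $\subseteq$ I would use axiom (M4), namely $\alpha(a) = \bigvee\{\alpha(c)\mid c\prec a\}$, together with the fact that $\dv{B}$, being atomic, satisfies the complete distributive law (Tarski: $\dv{B}\cong\wp(X_{\dv{B}})$), so that an atom below $\bigvee\{\alpha(c)\mid c\prec a\}$ must lie below some $\alpha(c)$. Since $\zeta_{\dv{A}}$ and $\vartheta_{\dv{B}}$ are isomorphisms in their respective categories and their inverses again fit into a commuting square, $\delta_\alpha$ is an isomorphism in $\deve$; naturality of $\delta$ then reduces to naturality of $\zeta$ and $\vartheta$ together with the compatibility $\sigma\circ\alpha = \alpha'\star\rho$ built into morphisms of $\deve$. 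With $\varepsilon$ and $\delta$ natural isomorphisms, the functors ${\sf E}$ and ${\sf C}$ witness the asserted dual equivalence between $\C$ and $\deve$.
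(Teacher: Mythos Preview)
Your proposal is sound, but there is nothing in the paper to compare it against: the paper does not prove this theorem. It is quoted from \cite[Thm.~5.9]{BMO18a}, and the surrounding text merely describes the functors ${\sf E}$ and ${\sf C}$ and asserts that they establish the dual equivalence.

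That said, your outline is exactly the expected argument and matches the strategy of the cited source: assemble the unit and counit from the de Vries isomorphisms $\zeta,\xi$ on the $\KHaus$/$\dev$ side and the Tarski isomorphisms $\eta,\vartheta$ on the ${\sf Set}$/${\sf CABA}$ side, and verify the two commuting squares. The step you flag as the ``main obstacle''---that for an atom $b$ of $\dv B$ one has $b\le\alpha(a)$ iff $\alpha_*(b)\in\zeta_{\dv A}(a)$---is precisely what the present paper records (without proof) as Lemma~\ref{lem: 5.3&6.2}(1), i.e.\ \cite[Lem.~5.3]{BMO18a}; your argument for it via (M4) and atomicity of $\dv B$ is correct. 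One cosmetic slip: in that verification you have the labels $\subseteq$ and $\supseteq$ interchanged---the ``easy'' direction, using $c\prec a\Rightarrow\alpha(c)\le\alpha(a)$, gives $(\alpha_*)^{-1}(\zeta_{\dv A}(a))\subseteq\vartheta_{\dv B}(\alpha(a))$, while (M4) together with atomicity yields the reverse inclusion. Your reduction of naturality to that of $\zeta,\xi,\eta,\vartheta$ is also correct.
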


\subsection{Maximal de Vries extensions, Stone-\v{C}ech compactifications, and completely regular spaces}

We next turn to maximal de Vries extensions.

\begin{definition}
\begin{enumerate}
\item[]
\item We call two de Vries extensions $\alpha : \dv{A} \to \dv{B}$ and $\gamma : \dv{C} \to \dv{B}$ \emph{compatible} if
$\alpha[\dv{A}]=\gamma[\dv{C}]$.
\item We say that a de Vries extension $\alpha : \dv{A} \to \dv{B}$ is \emph{maximal} provided for every compatible de Vries extension $\gamma : \dv{C} \to \dv{B}$, there is a de Vries morphism $\delta: \dv{C} \to \dv{A}$ such that $\alpha \star \delta = \gamma$.
\[
\xymatrix{
\dv{A} \ar[rr]^{\alpha} && \dv{B} \\
& \dv{C} \ar[lu]^{\delta} \ar[ru]_{\gamma} &
}
\]
\item Let $\Mdeve$ be the full subcategory of $\deve$ consisting of maximal de Vries extensions.
\end{enumerate}
\end{definition}

\begin{remark} \label{rem: compatible}
Let $\alpha : \dv{A} \to \dv{B}$ and $\gamma : \dv{C} \to \dv{B}$ be compatible de Vries extensions. Since $\alpha$ and $\gamma$ are 1-1 with the
same image, we have a bijection $\delta := \alpha^{-1}\circ \gamma : \dv{C} \to \dv{A}$ with inverse $\gamma^{-1}\circ\alpha : \dv{A} \to \dv{C}$.
Because $\alpha$ and $\gamma$ are both 1-1 and meet preserving, they are both order preserving and order reflecting, so $\delta$ and its inverse
are poset isomorphisms, hence Boolean isomorphisms.
\end{remark}

\begin{theorem} \label{thm: Stone-Cech}
\cite[Thm.~6.4]{BMO18a} If $e : X \to Y$ is a compactification, then the associated de Vries extension $e^{-1}: \RO(Y) \to \wp(X)$ is maximal iff $e:X\to Y$ is isomorphic in $\C$ to the Stone-\v{C}ech compactification $s : X\to\beta X$.
\end{theorem}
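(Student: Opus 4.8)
The plan is to carry the statement across the dual equivalence given by the functors ${\sf E}$ and ${\sf C}$ of Theorem~\ref{thm: duality} and reduce it to the universal property of the Stone-\v{C}ech compactification. The first ingredient is a concrete description of the de Vries extensions compatible with $e^{-1}$. For \emph{any} compactification $e:X\to Y$, the image $e^{-1}[\RO(Y)]$, viewed as a subset of $\wp(X)$, is exactly the collection of regular open subsets of $X$: because $X$ is dense in $Y$, the map $V\mapsto V\cap X$ is a bijection of $\RO(Y)$ onto $\RO(X)$. Since this subset of $\wp(X)$ does not depend on the compactification $e$, any two compactifications of a fixed completely regular space $X$ give compatible de Vries extensions. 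Conversely, if $\gamma:\dv C\to\wp(X)$ is a de Vries extension compatible with $e^{-1}$, then $\gamma[\dv C]$ is the set of regular open subsets of $X$, so ${\sf C}(\gamma)$ is a compactification whose domain has point set the atoms of $\wp(X)$, namely $X$, and whose topology is the one generated by $\gamma[\dv C]$; as $X$ is completely regular, hence regular, the regular open sets form a base for its topology, so this topology is the given one. Thus, modulo the isomorphisms of Remark~\ref{rem: compatible} and the naturality of ${\sf E}$ and ${\sf C}$, the de Vries extensions compatible with $e^{-1}$ correspond, via $e'\mapsto(e')^{-1}$, exactly to the compactifications $e':X\to Y'$ of $X$.

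Next I would translate maximality across the duality. If $e:X\to Y$ and $e':X\to Y'$ are compactifications and $g:Y\to Y'$ is continuous with $g\circ e=e'$, then $(\mathrm{id}_X,g)$ is a morphism of $\C$, and applying ${\sf E}$ yields a morphism $(g^{*},\mathrm{id})$ of $\deve$ from $(e')^{-1}$ to $e^{-1}$; unwinding the definition, this says precisely that $\delta:=g^{*}:\RO(Y')\to\RO(Y)$ satisfies $e^{-1}\star\delta=(e')^{-1}$. Conversely, applying ${\sf C}$ to any de Vries morphism $\delta:\RO(Y')\to\RO(Y)$ with $e^{-1}\star\delta=(e')^{-1}$ returns a continuous map $\delta_{*}:Y\to Y'$ with $\delta_{*}\circ e=e'$. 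Together with the first paragraph, this shows that $e^{-1}$ is maximal if and only if for every compactification $e':X\to Y'$ of $X$ there is a continuous map $Y\to Y'$ commuting with the embeddings; that is, if and only if $Y$ dominates every compactification of $X$.

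It then remains to observe that $Y$ dominates every compactification of $X$ exactly when $e$ is isomorphic in $\C$ to $s:X\to\beta X$. If $e\cong s$, then $e:X\to Y$ shares the universal property of the Stone-\v{C}ech compactification, so every compactification $e':X\to Y'$ of $X$, being a continuous map into the compact Hausdorff space $Y'$, extends along $e$ to a continuous map $Y\to Y'$; hence $Y$ dominates every compactification of $X$. Conversely, if $Y$ dominates every compactification of $X$, then in particular there is a continuous $g:Y\to\beta X$ with $g\circ e=s$, while the universal property of $\beta X$ provides a continuous $h:\beta X\to Y$ with $h\circ s=e$. Since $g\circ h$ and $h\circ g$ restrict to the identity on the dense subspaces $s[X]\subseteq\beta X$ and $e[X]\subseteq Y$, they are the identity maps; hence $g$ and $h$ are mutually inverse homeomorphisms, and $(\mathrm{id}_X,g)$ is an isomorphism in $\C$ from $e$ to $s$.

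The main obstacle is the first paragraph: one has to verify that compatibility with $e^{-1}$ genuinely forces the space produced by ${\sf C}$ to be $X$ with its original topology (rather than some other completely regular topology on the same underlying set), and one has to keep track of the Boolean isomorphisms of Remark~\ref{rem: compatible} precisely enough that ``de Vries extension compatible with $e^{-1}$'' and ``compactification of $X$'' are matched in an honest bijection. Once that is settled, the remaining steps are a routine transfer across Theorem~\ref{thm: duality} together with the defining property of $\beta X$.
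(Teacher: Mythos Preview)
The paper does not prove this statement: Theorem~\ref{thm: Stone-Cech} is simply quoted from \cite[Thm.~6.4]{BMO18a} with no argument given, so there is no ``paper's own proof'' to compare against. Your outline is essentially correct and is the natural line of argument; in fact it is organized around exactly the two ingredients the present paper does import from \cite{BMO18a}, namely Lemma~\ref{lem: 4.1} (the image of $e^{-1}$ is $\RO(X)$, independent of the compactification) and Lemma~\ref{lem: 5.3&6.2}(2) (compatible extensions induce the same topology on $X_\dv{B}$). Your first paragraph is precisely a reformulation of those two lemmas, and your second and third paragraphs are the expected transfer along the dual equivalence of Theorem~\ref{thm: duality} together with the universal property of $\beta X$.

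Two small points worth tightening. First, in your second paragraph you should be explicit that a de Vries morphism $\delta$ with $e^{-1}\star\delta=(e')^{-1}$ gives the $\deve$-morphism $(\delta,\mathrm{id}_{\wp(X)})$, so that applying ${\sf C}$ literally produces a $\C$-morphism $(\mathrm{id}_X,\delta_*)$; this is implicit in what you wrote but deserves one sentence. Second, the claim that the topology produced by ${\sf C}(\gamma)$ on the atoms of $\wp(X)$ coincides with the original topology on $X$ is exactly Lemma~\ref{lem: 5.3&6.2}(2) once you know $\gamma[\dv C]=e^{-1}[\RO(Y)]$; you need not rederive semiregularity by hand. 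With those two clarifications your argument is complete.
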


\begin{remark} \label{rem: equiv versus iso}
Equivalent compactifications $e : X \to Y$ and $e' : X \to Y'$ are isomorphic in $\C$ but the converse is not true in general \cite[Ex.~3.2]{BMO18a}. However, if $e'$ is the Stone-\v{C}ech compactification of $X$, then $e$ is equivalent to $e'$ iff $e$ is isomorphic to $e'$ in $\C$ \cite[Thm.~3.3]{BMO18a}.
\end{remark}

Let $\creg$ be the category of completely regular spaces and continuous maps. Sending a completely regular space $X$ to its Stone-\v{C}ech compactification $s : X\to\beta X$ yields an equivalence between $\creg$ and the full subcategory of $\C$ consisting of Stone-\v{C}ech compactifications. Since Stone-\v{C}ech compactifications dually correspond to maximal de Vries extensions, we arrive at the following duality theorem, which generalizes de Vries duality to completely regular spaces.

\begin{theorem}\label{thm: comp reg}
\cite[Thm.~6.9]{BMO18a} $\creg$ is dually equivalent to $\Mdeve$.
\end{theorem}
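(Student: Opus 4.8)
The plan is to obtain the dual equivalence by composing an ordinary equivalence between $\creg$ and a suitable full subcategory of $\C$ with the dual equivalence of Theorem~\ref{thm: duality}, using Theorem~\ref{thm: Stone-Cech} to pin down the relevant subcategories. Let ${\sf SC}$ denote the full subcategory of $\C$ whose objects are the Stone-\v{C}ech compactifications $s_X : X \to \beta X$ of completely regular spaces $X$. I would first show that $\creg$ is equivalent to ${\sf SC}$, then that the dual equivalence of Theorem~\ref{thm: duality} restricts to a dual equivalence between ${\sf SC}$ and $\Mdeve$; composing these gives that $\creg$ is dually equivalent to $\Mdeve$.

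For the first step, I would define a functor ${\sf S} : \creg \to {\sf SC}$ on objects by ${\sf S}(X) = (s_X : X \to \beta X)$ and on a continuous map $f : X \to X'$ by ${\sf S}(f) = (f, \beta f)$, where $\beta f : \beta X \to \beta X'$ is the unique continuous map with $\beta f \circ s_X = s_{X'} \circ f$ provided by the universal property of $\beta X$; functoriality of ${\sf S}$ is immediate from the uniqueness clause of that universal property. This ${\sf S}$ is faithful because $f$ is the first coordinate of ${\sf S}(f)$, and full because any $\C$-morphism $(f,g) : s_X \to s_{X'}$ satisfies $g \circ s_X = s_{X'} \circ f = \beta f \circ s_X$, which forces $g = \beta f$ since $s_X[X]$ is dense in $\beta X$ and $\beta X'$ is Hausdorff. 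It is essentially surjective onto ${\sf SC}$ by definition of ${\sf SC}$. Hence ${\sf S}$ is an equivalence.

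For the second step, note that for $e = s_X$ in ${\sf SC}$ the extension ${\sf E}(e) = e^{-1}$ is maximal by Theorem~\ref{thm: Stone-Cech} (as $s_X$ is $\C$-isomorphic to the Stone-\v{C}ech compactification of its own domain, namely itself), so ${\sf E}$ corestricts to a functor ${\sf SC} \to \Mdeve$; since ${\sf E}$ is fully faithful and ${\sf SC} \subseteq \C$, $\Mdeve \subseteq \deve$ are full subcategories, this corestriction is again fully faithful. For essential surjectivity, let $\alpha : \dv{A} \to \dv{B}$ be a maximal de Vries extension. The natural isomorphism $1_{\deve} \cong {\sf E} \circ {\sf C}$ of Theorem~\ref{thm: duality} gives a $\deve$-isomorphism $\alpha \cong ({\sf C}(\alpha))^{-1}$, where ${\sf C}(\alpha) : X_{\dv{B}} \to Y_{\dv{A}}$ is a compactification. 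Since maximality is invariant under $\deve$-isomorphism — given such an isomorphism one transports a compatible de Vries extension over the target across it, using that a $\dev$-isomorphism is a Boolean isomorphism respecting $\prec$, so that $\star$-composing with it coincides with ordinary composition — the extension $({\sf C}(\alpha))^{-1}$ is maximal, and Theorem~\ref{thm: Stone-Cech} then shows ${\sf C}(\alpha)$ is $\C$-isomorphic to $s_{X_{\dv{B}}} \in {\sf SC}$. Applying ${\sf E}$ yields $\alpha \cong ({\sf C}(\alpha))^{-1} \cong (s_{X_{\dv{B}}})^{-1} = {\sf E}(s_{X_{\dv{B}}})$, so $\alpha$ lies in the essential image. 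Thus ${\sf E}$ restricts to a contravariant equivalence ${\sf SC} \to \Mdeve$, i.e.\ ${\sf SC}$ is dually equivalent to $\Mdeve$, and composing with ${\sf S}$ finishes the argument.

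The genuinely substantive inputs here are Theorems~\ref{thm: duality} and~\ref{thm: Stone-Cech}, which are already available; the rest is the universal property of $\beta X$ together with routine category theory. The one point I expect to need a moment's care is the invariance of maximality under $\deve$-isomorphism (equivalently, that $\Mdeve$ is closed under isomorphism in $\deve$), since this is exactly what legitimizes restricting the dual equivalence of Theorem~\ref{thm: duality} to ${\sf SC}$ and $\Mdeve$.
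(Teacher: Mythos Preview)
Your proposal is correct and follows essentially the same approach as the paper: the paper does not give a formal proof (the result is cited from \cite{BMO18a}), but the preceding paragraph sketches exactly your argument---sending $X$ to $s_X:X\to\beta X$ gives an equivalence of $\creg$ with the full subcategory of $\C$ consisting of Stone-\v{C}ech compactifications, and these correspond under Theorem~\ref{thm: duality} to maximal de Vries extensions by Theorem~\ref{thm: Stone-Cech}. Your write-up simply fills in the routine details of this outline.
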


\subsection{Additional properties of de Vries algebras and de Vries extensions}

We conclude this preliminary section by recalling some basic facts about de Vries algebras and de Vries extensions that we will use 
subsequently. We start with de Vries algebras.


\begin{lemma} \label{prop: closed}
Let $\dv{A}$ be a de Vries algebra and $Y_\dv{A}$ its dual compact Hausdorff space.
\begin{enumerate}
\item There is an isomorphism between the lattice of round filters of $\dv{A}$ (ordered by reverse inclusion) and the lattice of round ideals
(ordered by inclusion), given by $F \mapsto \{ a \in \dv{A} \mid \lnot a \in F\}$ for a round filter $F$ and
$I \mapsto \{ a \in \dv{A} \mid \lnot a \in I\}$ for a round ideal $I$.
\item There is an isomorphism between the lattice of round filters of $\dv{A}$ and the lattice of closed subsets of $Y_\dv{A}$,
given by
$$
F \mapsto  \bigcap \{ \zeta_\dv{A}(a) \mid a \in F\} \mbox{ and } C \mapsto  \{ a \in \dv{A} \mid C \subseteq \zeta_\dv{A}(a) \}.
$$
\item There is an isomorphism between the lattice of round ideals of $\dv{A}$ and the lattice of open subsets of $Y_\dv{A}$, given by
$$
I \mapsto  \bigcup \{ \zeta_\dv{A}(a) \mid a \in I\} \mbox{ and } U \mapsto  \{ a \in \dv{A} \mid \cl(\zeta_\dv{A}(a)) \subseteq U\}.
$$
\end{enumerate}
\end{lemma}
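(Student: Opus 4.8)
The plan is to establish the three isomorphisms in turn, reducing (2) and (3) to (1) via de Vries duality. For part (1), I would first check that the assignment $F \mapsto I_F := \{a \in \dv{A} \mid \lnot a \in F\}$ sends a round filter to a round ideal. That $I_F$ is an ideal follows from De Morgan and the fact that $F$ is a filter; roundness of $I_F$ uses (DV5) and (DV6) together with roundness of $F$: if $a \in I_F$, then $\lnot a \in F = {\thu}F$, so there is $b \in F$ with $b \prec \lnot a$, hence by (DV5) $a \prec \lnot b$ with $\lnot b \in I_F$, giving $a \in {\thd}I_F$; the reverse inclusion ${\thd}I_F \subseteq I_F$ is similar. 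The same computation with the roles swapped shows $I \mapsto \{a \mid \lnot a \in I\}$ sends round ideals to round filters, and the two maps are mutually inverse since $\lnot\lnot a = a$. Both maps are clearly order-reversing on inclusion, hence order-preserving once the round filters are ordered by reverse inclusion; this gives the lattice isomorphism.

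For part (2), I would recall the standard correspondence from de Vries duality between round filters of $\dv{A}$ and points/closed sets of $Y_\dv{A}$. The map $F \mapsto \bigcap\{\zeta_\dv{A}(a) \mid a \in F\}$ clearly lands in the closed subsets of $Y_\dv{A}$ since each $\zeta_\dv{A}(a)$ is clopen (the $\zeta_\dv{A}(a)$ form a basis of clopens in the Stone-like topology, and are closed as well). To see this is a bijection onto closed sets with inverse $C \mapsto \{a \mid C \subseteq \zeta_\dv{A}(a)\}$, I would argue: given a closed set $C$, the set $F_C := \{a \mid C \subseteq \zeta_\dv{A}(a)\}$ is a filter, and it is round because $\zeta_\dv{A}(a) = \bigcup\{\zeta_\dv{A}(b) \mid b \prec a\}$ is covered (using (DV7) together with the fact that $b \prec a$ implies $\cl(\zeta_\dv{A}(b)) \subseteq \zeta_\dv{A}(a)$ in the dual space) and compactness of $C$ lets us extract a single $b$ with $C \subseteq \zeta_\dv{A}(b)$, $b \prec a$; so ${\thu}F_C = F_C$. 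That the two assignments are mutually inverse comes down to: every closed set is the intersection of the basic clopens containing it (since $Y_\dv{A}$ is compact Hausdorff and the $\zeta_\dv{A}(a)$ form a clopen basis), and every round filter $F$ equals $F_{C}$ where $C = \bigcap\{\zeta_\dv{A}(a) \mid a \in F\}$, which uses that a round filter is the intersection of the ends containing it — this is where maximality of ends and the approximation axiom (DV7) get used. Order-reversal of inclusion against reverse-inclusion on filters is immediate.

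For part (3), I would simply compose the isomorphism of (1) (round ideals $\leftrightarrow$ round filters) with the isomorphism of (2) (round filters $\leftrightarrow$ closed sets) and then with complementation (closed sets $\leftrightarrow$ open sets, $C \mapsto Y_\dv{A} \setminus C$), and check that the resulting composite is the stated map. Tracing through: a round ideal $I$ goes to the round filter $\{a \mid \lnot a \in I\}$, which goes to the closed set $\bigcap\{\zeta_\dv{A}(a) \mid \lnot a \in I\} = \bigcap\{\zeta_\dv{A}(\lnot a) \mid a \in I\} = \bigcap\{Y_\dv{A}\setminus\zeta_\dv{A}(a) \mid a \in I\}$, whose complement is $\bigcup\{\zeta_\dv{A}(a) \mid a \in I\}$, as claimed. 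For the inverse direction one checks the formula $U \mapsto \{a \mid \cl(\zeta_\dv{A}(a)) \subseteq U\}$ matches the composite of complementation with the inverse from (2) and then (1); the appearance of $\cl$ here rather than plain $\subseteq$ is exactly the De Morgan dual of the fact that the filter side used $\subseteq$ to a clopen, and $\cl(\zeta_\dv{A}(a)) \subseteq U$ is equivalent to $a \prec b$ for some $b$ with $\zeta_\dv{A}(b) \subseteq U$. I expect the main obstacle to be part (2): verifying carefully that $C \mapsto \{a \mid C \subseteq \zeta_\dv{A}(a)\}$ really is a round filter and that the two maps invert each other requires the compactness argument to pass from a cover of $C$ by basic clopens to a single member, plus the fact (from de Vries duality) that distinct round filters are separated by the basic clopens — once that is in hand, parts (1) and (3) are formal.
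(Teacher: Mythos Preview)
Your overall architecture is sound and matches the paper's implicit approach (the paper does not actually prove this lemma: it declares (1) straightforward, cites de Vries' thesis for (2), and says (3) is similar). However, there is a genuine error running through your treatment of (2) and (3): you repeatedly assume the sets $\zeta_\dv{A}(a)$ are clopen. They are not. In de Vries duality the dual space $Y_\dv{A}$ is an arbitrary compact Hausdorff space, and $\zeta_\dv{A}$ is an isomorphism onto $\RO(Y_\dv{A})$, so the $\zeta_\dv{A}(a)$ are regular open but typically not closed. Consequently your justification that $\bigcap\{\zeta_\dv{A}(a)\mid a\in F\}$ is closed fails as written, and in part (3) the identity $\zeta_\dv{A}(\lnot a)=Y_\dv{A}\setminus\zeta_\dv{A}(a)$ is false; the correct formula is $\zeta_\dv{A}(\lnot a)=Y_\dv{A}\setminus\cl(\zeta_\dv{A}(a))$.

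The fix is to use roundness rather than clopenness. For (2), closedness of $\bigcap\{\zeta_\dv{A}(a)\mid a\in F\}$ follows because $F$ is round: for each $a\in F$ there is $b\in F$ with $b\prec a$, hence $\cl(\zeta_\dv{A}(b))\subseteq\zeta_\dv{A}(a)$, so the intersection equals $\bigcap\{\cl(\zeta_\dv{A}(a))\mid a\in F\}$, an intersection of closed sets. For (3), the trace-through becomes
\[
\bigcap\{\zeta_\dv{A}(a)\mid \lnot a\in I\}=\bigcap\{\cl(\zeta_\dv{A}(a))\mid \lnot a\in I\}=\bigcap\{Y_\dv{A}\setminus\zeta_\dv{A}(\lnot a)\mid \lnot a\in I\},
\]
whose complement is $\bigcup\{\zeta_\dv{A}(b)\mid b\in I\}$ as desired. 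With these corrections your argument goes through; the compactness step for roundness of $F_C$ is fine once you drop the word ``clopen'' and use that the $\zeta_\dv{A}(b)$ for $b\prec a$ form an open cover of $\zeta_\dv{A}(a)$.
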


The proof of Lemma~\ref{prop: closed}(1) is straightforward, that of Lemma~\ref{prop: closed}(2) is given in
\cite[Thm.~1.3.12]{deV62}, and Lemma~\ref{prop: closed}(3) is proved similarly. The proof of the next lemma is also straightforward, 
and we skip it.

\begin{lemma} \label{lem: extra properties of morphisms}
Let $\rho : \dv{A} \to \dv{A}'$ be a de Vries morphism.
\begin{enumerate}
\item If $a \in \dv{A}$, then $\rho(a) \le \lnot\rho(\lnot a)$.
\item If $I$ is an ideal of $\dv{A}$, $a_1,\dots,a_n\in I$, $x \in \dv{A}'$, and $x \le \rho(a_1) \vee \cdots \vee \rho(a_n)$, then 
there is $b \in I$ $(b=a_1 \vee \cdots \vee a_n)$ such that $x \le \rho(b)$.
\end{enumerate}
\end{lemma}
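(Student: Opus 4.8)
The plan is to derive both statements by direct computation from the de Vries morphism axioms (M1)--(M4); neither part requires the full strength of the approximation axiom (M4), and I do not expect any genuine obstacle.

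For part (1), the point is simply that $\rho$ is meet preserving. Since $a \wedge \neg a = 0$, axiom (M2) gives $\rho(a) \wedge \rho(\neg a) = \rho(a \wedge \neg a) = \rho(0)$, and (M1) gives $\rho(0) = 0$. Hence $\rho(a) \wedge \rho(\neg a) = 0$, which is precisely the statement that $\rho(a) \le \neg\rho(\neg a)$.

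For part (2), I would first record the auxiliary fact that $\rho$ is order preserving: if $a \le c$, then $a = a \wedge c$, so by (M2) we have $\rho(a) = \rho(a \wedge c) = \rho(a) \wedge \rho(c)$, whence $\rho(a) \le \rho(c)$. Now put $b = a_1 \vee \cdots \vee a_n$. Since $I$ is an ideal, it is closed under finite joins, so $b \in I$. For each $i$ we have $a_i \le b$, and thus $\rho(a_i) \le \rho(b)$ by monotonicity; taking the join over $i$ gives $\rho(a_1) \vee \cdots \vee \rho(a_n) \le \rho(b)$. Combining this with the hypothesis $x \le \rho(a_1) \vee \cdots \vee \rho(a_n)$ yields $x \le \rho(b)$, which is what is claimed.

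The only subtlety worth flagging is that de Vries morphisms need not preserve joins, so one cannot simply assert $\rho(a_1) \vee \cdots \vee \rho(a_n) = \rho(b)$; but the inequality $\le$, which already follows from monotonicity alone, is all that part (2) requires, and it is precisely because equality can fail that the statement is phrased as an inequality with the witness $b$ exhibited explicitly.
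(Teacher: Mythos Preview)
Your proof is correct; both parts follow exactly as you argue from (M1), (M2), and the monotonicity consequence of (M2). The paper itself omits the proof as straightforward, so there is nothing further to compare.
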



The next two lemmas are about de Vries extensions.

\begin{lemma} \label{lem: 4.1}
\cite[Lem.~4.1]{BMO18a}
Let $e : X \to Y$ be a compactification of a completely regular space $X$. If $e^{-1} : \RO(Y) \to \wp(X)$ is the corresponding de Vries extension, 
then the image of $e^{-1}$ is $\RO(X)$ and $e^{-1}$ is a Boolean isomorphism from $\RO(Y)$ to $\RO(X)$.
\end{lemma}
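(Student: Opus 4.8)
The plan is to reduce the statement to a standard fact about dense subspaces. Identifying $X$ with the dense subspace $e[X]$ of $Y$, the map $e^{-1}$ becomes $U \mapsto U \cap X$, and I will use two elementary properties of a dense subspace $X \subseteq Y$: (a) $\cl_X A = (\cl_Y A) \cap X$ for every $A \subseteq X$; and (b) if $W$ is open in $Y$ and $W \cap X \subseteq C$ with $C$ closed in $Y$, then $W \subseteq C$ --- this holds because density forces $W \subseteq \cl_Y(W \cap X) \subseteq C$. Applying (b) with $W = U$ and $C = \cl_Y U$ shows in particular that $\cl_Y(U \cap X) = \cl_Y U$ for every open $U \subseteq Y$. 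With these in hand, the proof breaks into four steps: showing $U \cap X \in \RO(X)$ for $U \in \RO(Y)$; injectivity of $e^{-1}$; surjectivity of $e^{-1}$ onto $\RO(X)$; and checking that $e^{-1}$ is a Boolean homomorphism.

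For the first step, given $U \in \RO(Y)$ the set $U \cap X$ is open in $X$, and by (a) together with $\cl_Y(U\cap X) = \cl_Y U$ we get $\cl_X(U \cap X) = (\cl_Y U) \cap X$. The inclusion $U \cap X \subseteq \int_X((\cl_Y U)\cap X)$ is clear; for the reverse, a point of the right-hand side has a $Y$-open neighbourhood $W$ with $W \cap X \subseteq \cl_Y U$, whence $W \subseteq \cl_Y U$ by (b) and so $W \subseteq \int_Y \cl_Y U = U$. Thus $\int_X\cl_X(U\cap X) = U \cap X$, i.e.\ $U \cap X \in \RO(X)$. Injectivity is immediate (and also follows from $e^{-1}$ being a de Vries extension, hence $1$-$1$): if $U \cap X = V \cap X$ then $\cl_Y U = \cl_Y(U\cap X) = \cl_Y(V\cap X) = \cl_Y V$, so $U = \int_Y\cl_Y U = \int_Y\cl_Y V = V$.

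For surjectivity, given $V \in \RO(X)$ put $U = \int_Y\cl_Y V \in \RO(Y)$. Writing $V = W \cap X$ with $W$ open in $Y$, property (b) gives $W \subseteq \cl_Y V$, hence $W \subseteq U$ and $V = W \cap X \subseteq U \cap X$; conversely $U \cap X$ is $X$-open and contained in $(\cl_Y V)\cap X = \cl_X V$, so $U \cap X \subseteq \int_X\cl_X V = V$. Thus $U \cap X = V$ and the image of $e^{-1}$ is exactly $\RO(X)$. Finally, $e^{-1}$ preserves finite meets because meet in each of $\RO(Y)$ and $\RO(X)$ is intersection, and it preserves complements since $(\neg_Y U)\cap X = (Y \setminus \cl_Y U)\cap X = X \setminus \cl_X(U\cap X) = \neg_X(U\cap X)$ by (a); a bijection between Boolean algebras preserving meets and complements is a Boolean isomorphism, so $e^{-1} : \RO(Y) \to \RO(X)$ is one.

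The argument is entirely routine; the only thing requiring care is keeping track of whether interiors and closures are formed in $X$ or in $Y$, and invoking density at exactly the right moments --- the implication in (b), that an open set of $Y$ sitting inside a closed set after intersecting with $X$ already sits inside it, is the single fact that does all the real work.
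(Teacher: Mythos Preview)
Your proof is correct and complete; the four steps are cleanly executed, and the use of density via your observation (b) is exactly the right tool. Note that the paper itself does not prove this lemma---it is quoted from \cite[Lem.~4.1]{BMO18a} and stated without proof---so there is no in-paper argument to compare against. Your direct verification is the standard one and would serve as a self-contained substitute for the citation.
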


\begin{lemma} \label{lem: 5.3&6.2}
Let $\alpha : \dv{A} \to \dv{B}$ be a de Vries extension.
\begin{enumerate}
\item \cite[Lem.~5.3]{BMO18a}
If $a \in \dv{A}$ and $b \in X_\dv{B}$, then $b \le \alpha(a)$ iff $\alpha_*(b) \in \zeta(a)$.
\item \cite[Lem.~6.2]{BMO18a}
If $\gamma : \dv{C} \to \dv{B}$ is another de Vries extension, then $\alpha$ and $\gamma$ are compatible iff they induce the same topology on 
$X_\dv{B}$.
\end{enumerate}
\end{lemma}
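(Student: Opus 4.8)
\emph{Proof idea.} The plan is to prove (1) by unwinding the definitions and then to deduce (2) from it. For (1), observe that $\alpha_*(b)\in\zeta_{\dv A}(a)$ means $a\in\alpha_*(b)=\thu\alpha^{-1}({\uparrow}b)$, which, unpacking the definitions of $\thu$ and of $\alpha^{-1}({\uparrow}b)$, says exactly that there is $c\in\dv A$ with $c\prec a$ and $b\le\alpha(c)$. So it suffices to show $b\le\alpha(a)$ iff $b\le\alpha(c)$ for some $c\prec a$. One direction is immediate: $c\prec a$ gives $c\le a$ by (DV2), and $\alpha$ is order preserving by (M2), so $b\le\alpha(c)\le\alpha(a)$. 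For the other direction, (M4) gives $\alpha(a)=\bigvee\{\alpha(c)\mid c\prec a\}$; if $b\le\alpha(c)$ failed for every $c\prec a$ then, $b$ being an atom of $\dv B$, we would have $\alpha(c)\le\lnot b$ for all such $c$, hence $\alpha(a)\le\lnot b$, contradicting $0\ne b\le\alpha(a)$. This is the only place the atomicity of $\dv B$ enters.

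For (2), the point is that (1) rewrites the basic open sets of $\tau_\alpha$ algebraically: for each $a\in\dv A$ we get $\alpha_*^{-1}(\zeta_{\dv A}(a))=\{b\in X_\dv B\mid b\le\alpha(a)\}=\vartheta_\dv B(\alpha(a))$. Since $\{\zeta_{\dv A}(a)\mid a\in\dv A\}$ is a basis for $Y_\dv A$ and $\tau_\alpha$ is the coarsest topology making $\alpha_*$ continuous, the family $\{\alpha_*^{-1}(\zeta_{\dv A}(a))\mid a\in\dv A\}=\vartheta_\dv B[\alpha[\dv A]]$ is a basis for $\tau_\alpha$, and likewise $\vartheta_\dv B[\gamma[\dv C]]$ is a basis for $\tau_\gamma$. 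If $\alpha$ and $\gamma$ are compatible, then these are literally the same family of subsets of $X_\dv B$, whence $\tau_\alpha=\tau_\gamma$. For the converse I would show that $\vartheta_\dv B[\alpha[\dv A]]$ is determined by $\tau_\alpha$ alone: since $\alpha_*:X_\dv B\to Y_\dv A$ is a compactification (\cite[Thm.~5.7]{BMO18a}), Lemma~\ref{lem: 4.1} identifies the image of $\alpha_*^{-1}:\RO(Y_\dv A)\to\wp(X_\dv B)$ with $\RO(X_\dv B)$ computed in $\tau_\alpha$; and since $\zeta_{\dv A}$ maps $\dv A$ onto $\RO(Y_\dv A)$ (it is a de Vries isomorphism onto $(\dv A_*)^*=(\RO(Y_\dv A),\prec)$), the displayed identity gives $\vartheta_\dv B[\alpha[\dv A]]=\{\alpha_*^{-1}(V)\mid V\in\RO(Y_\dv A)\}=\RO(X_\dv B,\tau_\alpha)$. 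The same holds for $\gamma$, so $\tau_\alpha=\tau_\gamma$ forces $\vartheta_\dv B[\alpha[\dv A]]=\vartheta_\dv B[\gamma[\dv C]]$, and injectivity of $\vartheta_\dv B$ then yields $\alpha[\dv A]=\gamma[\dv C]$, i.e.\ $\alpha$ and $\gamma$ are compatible.

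The routine parts are the unwinding of $\alpha_*$ and $\thu$, the order-preservation of $\alpha$, and the basis bookkeeping. The one substantive step, and the main obstacle, is the identification $\vartheta_\dv B[\alpha[\dv A]]=\RO(X_\dv B,\tau_\alpha)$ used in the converse of (2): the subtlety is that $\alpha$ itself need not equal $(\alpha_*)^{-1}\circ\zeta_{\dv A}$, but part (1) shows that these two maps have the same image, and Lemma~\ref{lem: 4.1} pins that image down as the regular open sets of $(X_\dv B,\tau_\alpha)$ — this is where the real content of the lemma lies.
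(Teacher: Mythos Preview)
The paper does not prove this lemma; it is quoted from \cite[Lem.~5.3, Lem.~6.2]{BMO18a} and stated without proof in the preliminary section. So there is no proof in the present paper to compare against.

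That said, your argument is correct and is essentially the natural one. For (1) you have unwound the definitions properly and used atomicity exactly where needed. For (2) your key step --- identifying $\vartheta_{\dv B}[\alpha[\dv A]]$ with $\RO(X_{\dv B},\tau_\alpha)$ via part (1), the bijection $\zeta_{\dv A}:\dv A\to\RO(Y_{\dv A})$, and Lemma~\ref{lem: 4.1} --- is sound. One small clarification: your remark that ``$\alpha$ itself need not equal $(\alpha_*)^{-1}\circ\zeta_{\dv A}$'' is a bit misleading. Part (1) says precisely that $\vartheta_{\dv B}\circ\alpha=(\alpha_*)^{-1}\circ\zeta_{\dv A}$ as maps $\dv A\to\wp(X_{\dv B})$, so after the Tarski identification $\vartheta_{\dv B}$ these maps \emph{are} equal, not merely have the same image. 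This only strengthens your argument: the identity $\vartheta_{\dv B}[\alpha[\dv A]]=\alpha_*^{-1}[\RO(Y_{\dv A})]=\RO(X_{\dv B},\tau_\alpha)$ follows immediately.
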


\section{Normal de Vries extensions}

In this section we introduce normal de Vries extensions and show that every normal de Vries extension is maximal. We prove that the dual equivalence of Theorem~\ref{thm: comp reg} between $\creg$ and $\Mdeve$ restricts to a dual equivalence between the full subcategory $\norm$ of $\creg$ consisting of normal spaces and the full subcategory $\ndeve$ of $\Mdeve$ consisting of normal de Vries extensions.

\begin{definition}
\begin{enumerate}
\item[]
\item We call a de Vries extension $\alpha : \dv{A} \to \dv{B}$ \emph{normal} provided the following axiom holds: If $F$ is a round filter and
$I$ a round ideal of $\dv{A}$ with $\bigwedge \alpha[F] \le \bigvee \alpha[I]$, then there are $a,b\in\dv{A}$ such that $a \prec b$,
$\bigwedge \alpha[F] \le \alpha(a)$, and $\alpha(b) \le \bigvee \alpha[I]$.
\item Let $\ndeve$ be the full subcategory of $\deve$ consisting of normal de Vries extensions.
\end{enumerate}
\end{definition}

It is a well-known theorem (see, e.g., \cite[Cor.~3.6.4]{Eng89}) that a compactification $e:X\to Y$ of a normal space is equivalent to the Stone-\v{C}ech compactification $s : X \to \beta X$ iff disjoint closed sets in $X$ have disjoint closures in $Y$. We next prove an algebraic version of this result by characterizing normal de Vries extensions.

\begin{theorem} \label{prop:normality}
Let $e : X \to Y$ be a compactification. Then the associated de Vries extension $e^{-1} : \RO(Y) \to \wp(X)$ is normal iff $X$ is normal and $e$ is isomorphic to the Stone-\v{C}ech compactification $s : X \to \beta X$ of $X$. In particular, $X$ is normal iff $s^{-1} : \RO(\beta X) \to \wp(X)$ is a normal de Vries extension.
\end{theorem}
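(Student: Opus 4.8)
The plan is to translate the axiom defining a normal de Vries extension into a condition on the position of $X$ inside $Y$, and then to combine it with the classical description of the Stone-\v{C}ech compactification among all compactifications of a normal space. Throughout I identify $Y$ with its dual space $Y_{\RO(Y)}$, so that $\zeta(U)=U$ for $U\in\RO(Y)$, and $X$ with the dense subspace $e(X)\subseteq Y$, so that $e^{-1}(U)=U\cap X$ in $\wp(X)$. Under these identifications, Lemma~\ref{prop: closed} makes the round filters $F$ of $\RO(Y)$ correspond to the closed subsets $C$ of $Y$ with $\bigwedge e^{-1}[F]=C\cap X$, and the round ideals $I$ of $\RO(Y)$ correspond to the open subsets $O$ of $Y$ with $\bigvee e^{-1}[I]=O\cap X$. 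Hence $e^{-1}$ is a normal de Vries extension exactly when the following condition $(\star)$ holds: whenever $C\subseteq Y$ is closed, $O\subseteq Y$ is open and $C\cap X\subseteq O\cap X$, there are $U,V\in\RO(Y)$ with $U\prec V$, $C\cap X\subseteq U\cap X$ and $V\cap X\subseteq O\cap X$. The bookkeeping in this reduction is routine but must be done carefully.

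For the forward implication I would show that $(\star)$ forces $X$ to be normal and disjoint closed subsets of $X$ to have disjoint closures in $Y$. Given disjoint closed $D_1,D_2\subseteq X$, apply $(\star)$ with $C=\cl(D_1)$ and $O=Y\setminus\cl(D_2)$; since $C\cap X=D_1$, $O\cap X=X\setminus D_2$ and $D_1\subseteq X\setminus D_2$, this yields $U\prec V$ with $D_1\subseteq U$ and $V\cap D_2=\emptyset$. Then $\cl(U)\subseteq V$ gives $D_2\subseteq Y\setminus V\subseteq Y\setminus\cl(U)$, so $U$ and $Y\setminus\cl(U)$ are disjoint open subsets of $Y$ whose traces on $X$ separate $D_1$ and $D_2$; moreover $\cl(D_1)\subseteq\cl(U)\subseteq V$ and $\cl(D_2)\subseteq Y\setminus V$, so the two closures are disjoint. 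Now $X$ is normal, so \cite[Cor.~3.6.4]{Eng89} shows $e$ is equivalent to $s:X\to\beta X$, and since a normal space is completely regular, Remark~\ref{rem: equiv versus iso} upgrades this to an isomorphism in $\C$.

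For the converse, assume $X$ is normal and $e$ is isomorphic in $\C$ to $s$. By Remark~\ref{rem: equiv versus iso} it is equivalent to $s$, and since $(\star)$ is clearly invariant under equivalence of compactifications (a homeomorphism over $X$ carries regular open sets, the relation $\prec$ and traces on $X$ to their counterparts), it suffices to verify $(\star)$ for $e=s:X\to\beta X$. Given closed $C\subseteq\beta X$ and open $O\subseteq\beta X$ with $C\cap X\subseteq O\cap X$, the sets $D_1=C\cap X$ and $D_2=X\setminus(O\cap X)$ are disjoint closed subsets of the normal space $X$, so Urysohn's lemma supplies a continuous $f:X\to[0,1]$ with $f\equiv 0$ on $D_1$ and $f\equiv 1$ on $D_2$; let $\bar f:\beta X\to[0,1]$ be its continuous extension and put $U=\int\big(\bar f^{-1}[0,\tfrac13]\big)$ and $V=\int\big(\bar f^{-1}[0,\tfrac23]\big)$. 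Since the interior of a closed set is regular open, $U,V\in\RO(\beta X)$; since $\cl(U)\subseteq\bar f^{-1}[0,\tfrac13]\subseteq\bar f^{-1}\big([0,\tfrac23)\big)\subseteq V$ we get $U\prec V$; since $D_1\subseteq\bar f^{-1}\big([0,\tfrac13)\big)\subseteq U$ we get $C\cap X\subseteq U\cap X$; and since $V\cap X\subseteq f^{-1}[0,\tfrac23]\subseteq X\setminus D_2=O\cap X$ we get $V\cap X\subseteq O\cap X$. Thus $(\star)$ holds and $s^{-1}$ is normal. The final ``in particular'' assertion is the special case $e=s$ of what has been proved.

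The step I expect to be the main obstacle is the verification of $(\star)$ in the converse: one must produce \emph{regular} open sets $U\prec V$ with prescribed traces on $X$, and one cannot simply pass to regular open hulls, since enlarging $V$ may wreck the inclusion $V\cap X\subseteq O\cap X$. The resolution is to use a Urysohn function of $X$ extended to $\beta X$ --- rather than mere normality of $\beta X$ --- together with the observation that the interior of a closed set is regular open. A secondary point requiring care is the reformulation step, in particular verifying that $\bigwedge e^{-1}[F]$ really equals $C\cap X$ for the closed set $C$ that corresponds to the round filter $F$.
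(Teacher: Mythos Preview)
Your argument is correct, and the overall architecture matches the paper: reformulate normality of the extension as a topological condition on $X\subseteq Y$ via the round-filter/round-ideal dictionary of Lemma~\ref{prop: closed}, and then invoke the classical characterisation \cite[Cor.~3.6.4]{Eng89} of the Stone--\v{C}ech compactification of a normal space. Your direction ``$(\star)\Rightarrow X$ normal and $e\simeq s$'' is essentially identical to the paper's.

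The one place you diverge is in the other direction. To produce $U\prec V$ in $\RO(\beta X)$ with the required traces on $X$, the paper stays on the side of the ``disjoint closures'' description of $\beta X$: it applies normality of $X$ twice to get $V_1,V_2\in\RO(X)$ with $\cl_X(V_1)\cap\cl_X(V_2)=\varnothing$, lifts them to $W_1,W_2\in\RO(\beta X)$ via Lemma~\ref{lem: 4.1}, and then uses \cite[Cor.~3.6.4]{Eng89} to conclude $\cl_{\beta X}(W_1)\cap\cl_{\beta X}(W_2)=\varnothing$, whence $W_1\prec\lnot W_2$. You instead use the universal property of $\beta X$: a Urysohn function on $X$ extends to $\bar f$ on $\beta X$, and the interiors of sublevel sets of $\bar f$ are automatically regular open with the correct traces. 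Both approaches avoid the pitfall you identify (that passing to regular-open hulls can enlarge $V\cap X$); the paper does so by working with complements, you do so by choosing $V$ as an interior of a closed set from the start. Your route is arguably cleaner here, as it uses Urysohn once rather than normality twice, and makes the production of regular opens transparent; the paper's route has the virtue of using only the single external input \cite[Cor.~3.6.4]{Eng89} in both directions.
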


\begin{proof}
First suppose that $X$ is normal and $e$ is isomorphic to the Stone-\v{C}ech compactification $s : X \to \beta X$. To simplify notation, we identify $e$ with $s$ and view $X$ as a dense subspace of $\beta X$. Then $s^{-1}(W) = W \cap X$. Let $F$ be a round filter and $I$ a round ideal of $\RO(\beta X)$ with $\bigwedge s^{-1}[F] \le \bigvee s^{-1}[I]$. We have $\bigwedge s^{-1}[F] = (\bigcap F) \cap X =: C$ is closed in $X$, $\bigvee s^{-1}[I] = (\bigcup I) \cap X =: U$ is open in $X$, and $C\subseteq U$. Let $D = X \setminus U$. Then $D$ is closed and $C \cap D = \varnothing$. Using that $X$ is normal twice, there are $V_1, V_2 \in \RO(X)$ with $C \subseteq V_1$, $D \subseteq V_2$, and $\cl_X(V_1) \cap \cl_X(V_2) = \varnothing$. By Lemma~\ref{lem: 4.1}, there are $W_1, W_2 \in \RO(\beta X)$ with $W_i \cap X = V_i$. By \cite[Cor.~3.6.4]{Eng89}, $\cl_{\beta X}(V_1) \cap \cl_{\beta X}(V_2) = \varnothing$, so since $X$ is dense in $\beta X$, we have $\cl_{\beta X}(W_1) \cap \cl_{\beta X}(W_2) = \varnothing$. Therefore, $\cl_{\beta X}(W_1) \subseteq \beta X \setminus \cl_{\beta X}(W_2)$. Since $W_2$ is regular open, $\cl_{\beta X}(W_2)$ is regular closed, so $W_3 := \beta X \setminus \cl_{\beta W}(W_2)$ is regular open and $W_1 \prec W_3$. We have $C \subseteq V_1 = W_1 \cap X$ and
\begin{align*}
W_3 \cap X &= X \cap (\beta X \setminus \cl_{\beta X}(W_2))= X \setminus \cl_{\beta X}(W_2) \\
&= X \setminus \cl_{\beta X}(W_2 \cap X) = X \setminus \cl_X(V_2) \subseteq X \setminus V_2 \subseteq U.
\end{align*}
Therefore, we have found $W_1 \prec W_3$ with $\bigwedge s^{-1}[F] \subseteq W_1$ and $W_3 \subseteq \bigvee s^{-1}[I]$. This proves that $s^{-1} : \RO(\beta X) \to \wp(X)$ is a normal de Vries extension.

Conversely, suppose that $e^{-1} : \RO(Y) \to \wp(X)$ is a normal de Vries extension. By \cite[Cor.~3.6.4]{Eng89}, to show that $X$ is normal and 
$e$ is isomorphic to the Stone-\v{C}ech compactification of $X$ it is sufficient to show that if $C$ and  $D$ are disjoint closed sets of $X$, then 
$\cl_Y(C) \cap \cl_Y(D) = \varnothing$. Let $U = X \setminus D$. Then $C \subseteq U$. Set $V = \int_Y(U \cup (Y \setminus X))$. By 
Lemma~\ref{prop: closed}, there is a round filter $F$ of $\RO(Y)$ with $\bigcap F = \cl_Y(C)$ and a round ideal $I$ of $\RO(Y)$ with $\bigcup I = V$. 
Therefore, $\bigwedge e^{-1}[F] = \cl_Y(C) \cap X = C$ and $\bigvee e^{-1}[I] = V \cap X =  U$. Since $e^{-1} : \RO(Y) \to \wp(X)$ is a normal 
de Vries extension, there are $W_1,W_2 \in \RO(Y)$ with $W_1 \prec W_2$, $C \subseteq W_1 \cap X$, and $W_2 \cap X \subseteq U$. Then 
$\cl_Y(W_1) \subseteq W_2$. Since $C \subseteq W_1 \cap X \subseteq W_1$, we see that $\cl_Y(C) \subseteq \cl_Y(W_1) \subseteq W_2$. Also, 
$W_2 \cap X \subseteq U$, so $W_2 \subseteq U \cup (Y \setminus X)$. Since $W_2$ is open in $Y$, it follows that 
$W_2 \subseteq \int_Y(U \cup (Y \setminus X)) = V$. Therefore, $\cl_Y(C) \subseteq V$, so $\cl_Y(C) \cap (Y\setminus V) = \varnothing$. Finally,
\[
Y \setminus V = Y \setminus \int_Y(U \cup (Y \setminus X)) = \cl_Y(X \setminus U) = \cl_Y(D).
\]
Consequently, $\cl_Y(C) \cap \cl_Y(D) = \varnothing$. This finishes the proof.
\end{proof}

\begin{corollary}
Let $\alpha : \dv{A} \to \dv{B}$ be a normal de Vries extension. Then $\alpha$ is maximal.
\end{corollary}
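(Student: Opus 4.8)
The plan is to move the problem to the topological side via the dual equivalence of Theorem~\ref{thm: duality} and then combine Theorem~\ref{prop:normality} with Theorem~\ref{thm: Stone-Cech}. Since ${\sf E}$ and ${\sf C}$ form a dual equivalence between $\C$ and $\deve$, we have ${\sf E}\circ{\sf C}\cong 1_{\deve}$, so $\alpha:\dv{A}\to\dv{B}$ is isomorphic in $\deve$ to ${\sf E}({\sf C}(\alpha))$. Tracing through the definitions of ${\sf C}$ and ${\sf E}$, the latter is the de Vries extension $(\alpha_*)^{-1}:\RO(Y_\dv{A})\to\wp(X_\dv{B})$ associated to the compactification $\alpha_*:X_\dv{B}\to Y_\dv{A}$, where $X_\dv{B}$ carries the topology $\tau_\alpha$. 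Hence it suffices to show that $(\alpha_*)^{-1}$ is maximal, once we know that being a normal de Vries extension transfers across the isomorphism $\alpha\cong(\alpha_*)^{-1}$ in $\deve$.

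Granting that transfer, $(\alpha_*)^{-1}$ is a normal de Vries extension, so Theorem~\ref{prop:normality}, applied to the compactification $e=\alpha_*$, tells us that $X_\dv{B}$ is normal and $\alpha_*$ is isomorphic in $\C$ to the Stone-\v{C}ech compactification of $X_\dv{B}$. Theorem~\ref{thm: Stone-Cech} then yields that $(\alpha_*)^{-1}$ is maximal, and transferring maximality back across $\alpha\cong(\alpha_*)^{-1}$ gives that $\alpha$ is maximal.

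So the only thing needing verification is that the properties ``normal de Vries extension'' and ``maximal de Vries extension'' are invariant under isomorphism in $\deve$; this is the step I expect to require the most care, though it is routine. For an isomorphism $(\rho,\sigma):\alpha\to\alpha'$ in $\deve$, the component $\rho$ is a de Vries isomorphism, $\sigma$ is a complete Boolean isomorphism, and $\sigma\circ\alpha=\alpha'\star\rho$; thus $\rho$ carries the round filters and round ideals of $\dv{A}$ bijectively onto those of $\dv{A}'$ and preserves $\prec$, while $\sigma$ preserves arbitrary meets and joins, so the defining condition of normality for $\alpha$ passes verbatim to $\alpha'$. For maximality one checks that, given a de Vries extension $\gamma':\dv{C}'\to\dv{B}'$ compatible with $\alpha'$, the composite $\sigma^{-1}\circ\gamma'$ is a de Vries extension compatible with $\alpha$, and that a factorization $\alpha\star\delta=\sigma^{-1}\circ\gamma'$ yields $\alpha'\star(\rho\star\delta)=\gamma'$; here one uses the identity $\sigma\star\alpha=\sigma\circ\alpha$, valid because $\sigma$ is a complete Boolean homomorphism between extremally disconnected de Vries algebras, to reconcile $\star$ with $\circ$. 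Everything else is a direct appeal to results already established.
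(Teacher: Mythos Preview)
Your proof is correct and follows essentially the same route as the paper's: the paper's proof is the one-line ``This follows from Theorems~\ref{thm: Stone-Cech} and \ref{prop:normality}'', which implicitly passes through the dual equivalence of Theorem~\ref{thm: duality} to replace $\alpha$ by the de Vries extension $(\alpha_*)^{-1}$ coming from the compactification $\alpha_*:X_\dv{B}\to Y_\dv{A}$, exactly as you do. Your version simply makes explicit the routine isomorphism-invariance of normality and maximality that the paper takes for granted.
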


\begin{proof}
This follows from Theorems~\ref{thm: Stone-Cech} and \ref{prop:normality}
\end{proof}

\begin{remark}
Using Theorem~\ref{thm: duality}, we can phrase Theorem~\ref{prop:normality} dually as follows: A de Vries extension $\alpha : \dv{A} \to \dv{B}$ is normal iff $X_\dv{B}$ is normal and the corresponding compactification $\alpha_* : X_\dv{B} \to Y_\dv{A}$ is isomorphic to the Stone-\v{C}ech compactification of $X_\dv{B}$.
\end{remark}

Let $\norm$ be the full subcategory of $\creg$ consisting of normal spaces. Putting Theorems~\ref{thm: comp reg} and~\ref{prop:normality} together yields the following duality theorem for normal spaces.

\begin{theorem}
There is a dual equivalence between $\norm$ and $\ndeve$.
\end{theorem}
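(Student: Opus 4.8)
The plan is to assemble the duality from pieces already in place rather than constructing functors from scratch. By Theorem~\ref{thm: comp reg} we have a dual equivalence $\creg \simeq \Mdeve^{\mathrm{op}}$, implemented by the functors restricting $\sf E$ and $\sf C$ (composed with the equivalence between $\creg$ and Stone-\v{C}ech compactifications). By the Corollary, every normal de Vries extension is maximal, so $\ndeve$ is genuinely a full subcategory of $\Mdeve$; and $\norm$ is by definition a full subcategory of $\creg$. So it suffices to show that the given dual equivalence carries $\norm$ into $\ndeve$ and $\ndeve$ into $\norm$ — i.e. that the equivalence matches these two full subcategories — and then invoke the general fact that an equivalence of categories restricts to an equivalence between corresponding full subcategories once the object classes are shown to correspond.

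First I would make the object-level correspondence precise. Given a normal space $X$, its image under the equivalence is (up to isomorphism) the de Vries extension $s^{-1} : \RO(\beta X) \to \wp(X)$ associated to the Stone-\v{C}ech compactification $s : X \to \beta X$. By Theorem~\ref{prop:normality} (the ``in particular'' clause), $s^{-1}$ is a normal de Vries extension, so the image lies in $\ndeve$. Conversely, let $\alpha : \dv{A} \to \dv{B}$ be a normal de Vries extension. By the Corollary it is maximal, so under $\sf C$ it corresponds to a compactification $\alpha_* : X_\dv{B} \to Y_\dv{A}$ which, being maximal, is isomorphic in $\C$ to the Stone-\v{C}ech compactification of $X_\dv{B}$ (Theorem~\ref{thm: Stone-Cech}). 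Applying Theorem~\ref{prop:normality} to this compactification: since $\alpha$ is normal and $\sf E$ sends $\alpha_*$ back to something isomorphic to $\alpha$, the space $X_\dv{B}$ is normal. Hence the preimage of $\ndeve$ under the equivalence consists of normal spaces, completing the bijection on objects.

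Once the objects correspond, the morphisms take care of themselves: $\norm$ is \emph{full} in $\creg$ and $\ndeve$ is \emph{full} in $\Mdeve$, so the hom-sets on both sides are just the ambient hom-sets, and the equivalence $\creg \simeq \Mdeve^{\mathrm{op}}$ already gives bijections on those. Thus the restriction of the functors from Theorem~\ref{thm: comp reg} to $\norm$ and $\ndeve$ is still fully faithful, and by the object correspondence it is essentially surjective in both directions; therefore it is a dual equivalence between $\norm$ and $\ndeve$.

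I do not expect a serious obstacle here, since all the substantive work has been done: Theorem~\ref{prop:normality} supplies the exact topological characterization needed, the Corollary ensures $\ndeve \subseteq \Mdeve$, and Theorem~\ref{thm: comp reg} supplies the ambient duality. The only point requiring a little care is the bookkeeping through the chain of equivalences — $\creg \simeq \{\text{Stone-\v{C}ech compactifications}\} \subseteq \C \simeq \deve^{\mathrm{op}}$, restricted to $\Mdeve^{\mathrm{op}}$ — so that ``image under the equivalence'' is unambiguous and Theorem~\ref{prop:normality} is being applied to the correct compactification; invoking Remark~\ref{rem: equiv versus iso} (equivalence vs.\ isomorphism in $\C$ for Stone-\v{C}ech compactifications) keeps this clean. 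Otherwise the argument is a formal consequence of restricting an equivalence to matching full subcategories.
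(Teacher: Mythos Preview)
Your proposal is correct and follows exactly the approach the paper indicates: the paper simply states that the theorem is obtained by putting Theorems~\ref{thm: comp reg} and~\ref{prop:normality} together, and your write-up just unpacks this, using the Corollary to place $\ndeve$ inside $\Mdeve$ and then restricting the ambient dual equivalence to the matching full subcategories.
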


\section{Locally compact de Vries extensions}

In this section we introduce locally compact de Vries extensions. Unlike normal de Vries extensions, locally compact de Vries extensions do not have to be maximal. We prove that the category $\ldeve$ of locally compact maximal de Vries extensions is dually equivalent to the category $\LKHaus$ of locally compact Hausdorff spaces.

Let $e : X \to Y$ be a compactification and $e^{-1}:\RO(Y)\to\wp(X)$ the corresponding de Vries extension. For $U\in\RO(Y)$, we have:
\begin{align*}
\lnot e^{-1}(\lnot U) &=  X \setminus e^{-1}(\int_Y(Y \setminus U)) = X \setminus e^{-1}(Y \setminus \cl_Y(U)) \\
&=  X \setminus (X \setminus e^{-1}(\cl_Y(U))) = e^{-1}(\cl_Y(U)).
\end{align*}
Therefore, $\lnot e^{-1}(\lnot U)$ is a closed subset of $X$. For it to be compact, since $e^{-1}[\RO(Y)]=\RO(X)$ by Lemma~\ref{lem: 4.1}, if $\lnot e^{-1}(\lnot U) \subseteq \bigcup \{ e^{-1}(V_i) \mid i\in I \}$, with $V_i \in \RO(Y)$, then there is a finite $J \subseteq I$ with $\lnot e^{-1}(\lnot U) \subseteq \bigcup \{ e^{-1}(V_i) \mid i\in J \}$. This motivates the following definition.

\begin{definition}
Let $\alpha : \dv{A} \to \dv{B}$ be a de Vries extension.
\begin{enumerate}
\item We call $a \in \dv{A}$ \emph{$\alpha$-compact} provided $\lnot\alpha(\lnot a) \le \bigvee \alpha[S]$ for some $S \subseteq \dv{A}$ implies that there is a finite $T \subseteq S$ with $\lnot \alpha(\lnot a) \le \bigvee \alpha[T]$.
\item Let $I_\alpha = \{ a \in \dv{A} \mid a$ is $\alpha$-compact$\}$.
\end{enumerate}
\end{definition}

\begin{lemma}\label{lem: I_alpha}
If $\alpha: \dv{A} \to \dv{B}$ is a de Vries extension, then $I_\alpha$ is an ideal of $\dv{A}$.
\end{lemma}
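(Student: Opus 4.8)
The plan is to verify directly from the definition of $\alpha$-compactness the three defining properties of a (lattice) ideal: that $I_\alpha$ is nonempty (equivalently, that $0\in I_\alpha$), that it is downward closed, and that it is closed under binary joins. Throughout I would use that every de Vries morphism is order preserving and meet preserving (both consequences of (M2)), and that $\dv{A}$ and $\dv{B}$ are Boolean algebras, so De Morgan's laws are available.

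For $0\in I_\alpha$, the key observation is that $\alpha(1)=1$. Indeed, $1\prec 1$ by (DV1), so (M3) gives $\lnot\alpha(\lnot 1)\prec\alpha(1)$; since $\lnot 1=0$ and $\alpha(0)=0$ by (M1), this reads $1\prec\alpha(1)$, and as $\dv{B}$ is extremally disconnected $\prec$ coincides with $\le$, forcing $\alpha(1)=1$. Hence $\lnot\alpha(\lnot 0)=\lnot\alpha(1)=0\le\bigvee\alpha[\varnothing]$, so $\varnothing$ is the required finite subcover and $0$ is $\alpha$-compact. Closure under binary joins is the straightforward case: if $a,b\in I_\alpha$, then $\lnot(a\vee b)=\lnot a\wedge\lnot b$, so by (M2) and De Morgan $\lnot\alpha(\lnot(a\vee b))=\lnot\alpha(\lnot a)\vee\lnot\alpha(\lnot b)$. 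Thus if $\lnot\alpha(\lnot(a\vee b))\le\bigvee\alpha[S]$, both $\lnot\alpha(\lnot a)$ and $\lnot\alpha(\lnot b)$ lie below $\bigvee\alpha[S]$, and $\alpha$-compactness of $a$ and of $b$ yields finite $T_a,T_b\subseteq S$ with $\lnot\alpha(\lnot a)\le\bigvee\alpha[T_a]$ and $\lnot\alpha(\lnot b)\le\bigvee\alpha[T_b]$; then $T:=T_a\cup T_b$ works.

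Downward closure is where a small trick is needed, and I expect it to be the main obstacle. Suppose $a\in I_\alpha$ and $a'\le a$; I want $a'$ to be $\alpha$-compact. Given $S\subseteq\dv{A}$ with $\lnot\alpha(\lnot a')\le\bigvee\alpha[S]$, I would pass to $S^+:=S\cup\{\lnot a'\}$. Since $\alpha(\lnot a')\vee\lnot\alpha(\lnot a')=1$, we get $\bigvee\alpha[S^+]=1\ge\lnot\alpha(\lnot a)$, so $\alpha$-compactness of $a$ produces a finite $T^+\subseteq S^+$ with $\lnot\alpha(\lnot a)\le\bigvee\alpha[T^+]$. Put $T:=T^+\cap S$ (finite); then $\bigvee\alpha[T^+]\le\bigvee\alpha[T]\vee\alpha(\lnot a')$. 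Since $a'\le a$ and $\alpha$ is order preserving, $\lnot\alpha(\lnot a')\le\lnot\alpha(\lnot a)\le\bigvee\alpha[T]\vee\alpha(\lnot a')$; meeting both sides with $\lnot\alpha(\lnot a')$ and using $\alpha(\lnot a')\wedge\lnot\alpha(\lnot a')=0$ leaves $\lnot\alpha(\lnot a')\le\bigvee\alpha[T]$ with $T\subseteq S$ finite, so $a'\in I_\alpha$. Combining the three parts, $I_\alpha$ is a nonempty, downward closed, join-closed subset of $\dv{A}$, i.e.\ an ideal.

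The point of the trick in the last step is that $\alpha$-compactness of $a$ only constrains covers of $\lnot\alpha(\lnot a)$, not of the smaller element $\lnot\alpha(\lnot a')$; adjoining $\lnot a'$ to a cover of $\lnot\alpha(\lnot a')$ inflates it to a cover of $\lnot\alpha(\lnot a)$ (in fact of $1$), and this is exactly the algebraic shadow of the topological fact that a closed subset of a compact set is compact. The only genuinely nonroutine verifications are the identity $\alpha(1)=1$ and this cover-enlargement argument; everything else is bookkeeping with (M1), (M2), and Boolean identities.
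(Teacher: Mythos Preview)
Your proof is correct and follows essentially the same route as the paper: both verify nonemptiness via $\lnot\alpha(\lnot 0)=0$, handle joins by splitting $\lnot\alpha(\lnot(a\vee b))$ as $\lnot\alpha(\lnot a)\vee\lnot\alpha(\lnot b)$, and prove downward closure by adjoining $\lnot a'$ to the cover so that $\alpha$-compactness of $a$ applies. Your version is slightly more explicit (e.g.\ justifying $\alpha(1)=1$ via (M3) and spelling out the meet-with-$\lnot\alpha(\lnot a')$ step), but there is no substantive difference in strategy.
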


\begin{proof}
First, $0 \in I_\alpha$ because $\lnot \alpha(\lnot 0) = \lnot\alpha(1) = \lnot 1 = 0$ is clearly $\alpha$-compact, so $I_\alpha$ is nonempty. Next, let $a \in I_\alpha$ and
$b \le a$. Suppose that $\lnot\alpha(\lnot b) \le \bigvee \alpha[S]$ for some $S \subseteq \dv{A}$. Then
\[
\lnot\alpha(\lnot a) \le 1 = \lnot\alpha(\lnot b) \vee \alpha(\lnot b) \le \bigvee \alpha[S] \vee \alpha(\lnot b),
\]
so there is a finite $T \subseteq S$ with $\lnot\alpha(\lnot a) \le \bigvee \alpha[T] \vee \alpha(\lnot b)$. Since $b \le a$, we have $\lnot\alpha(\lnot b) \le \lnot\alpha(\lnot a)$, so $\lnot\alpha(\lnot b) \le \bigvee \alpha[T]$, which shows that $b \in I_\alpha$. Finally, let $a,b \in I_\alpha$. Suppose $\lnot\alpha(\lnot(a \vee b)) \le \bigvee \alpha[S]$. Then $\lnot\alpha(\lnot a) \vee \lnot\alpha(\lnot b) = \lnot\alpha(\lnot(a \vee b)) \le \bigvee \alpha[S]$. Therefore, there are finite $T, T' \subseteq S$ with $\lnot\alpha(\lnot a) \le \bigvee \alpha[T]$ and $\lnot\alpha(\lnot b) \le \bigvee \alpha[T']$. Thus, $\lnot\alpha(\lnot(a \vee b)) \le \bigvee \alpha[T \cup T']$, yielding that $a \vee b \in I_\alpha$. This completes the proof that $I_\alpha$ is an ideal of $\dv{A}$.
\end{proof}

\begin{theorem} \label{prop: I_alpha}
Let $\alpha : \dv{A} \to \dv{B}$ be a de Vries extension. Then the following are equivalent.
\begin{enumerate}
\item $X_{\dv{B}}$ is locally compact.
\item For each $b \in \dv{A}$ we have $\alpha(b) = \bigvee \{ \alpha(a) \mid a \in I_\alpha, a \prec b \}$.
\item $I_\alpha$ is a round ideal with $\bigvee \alpha[I_\alpha] = 1$.
\end{enumerate}
\end{theorem}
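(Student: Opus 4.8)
The plan is to translate everything through the duality of Theorem~\ref{thm: duality}, so that we may work with the compactification $\alpha_* : X_\dv{B} \to Y_\dv{A}$ and identify $X_\dv{B}$ with a dense subspace of the compact Hausdorff space $Y_\dv{A}$ via $\alpha_*$. Under this identification $X_\dv{B} = X$, $Y_\dv{A} = Y$, $\alpha = e^{-1}$, and $\alpha(a) = \zeta(a) \cap X$ for $a \in \dv{A}$ (using Lemma~\ref{lem: 5.3&6.2}(1)), while $\lnot\alpha(\lnot a) = \cl_Y(\zeta(a)) \cap X$ by the computation preceding the definition of $\alpha$-compactness. The key preliminary observation, which I would record first, is the topological meaning of $I_\alpha$: an element $a \in \dv{A}$ is $\alpha$-compact iff $\cl_Y(\zeta(a)) \cap X$ is a compact subset of $X$. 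The forward direction of this was already essentially derived in the motivating paragraph; the reverse uses Lemma~\ref{lem: 4.1} to write any open cover of $\cl_Y(\zeta(a)) \cap X$ by sets of $X$ as a cover by sets of the form $\alpha(V_i)$ with $V_i \in \RO(Y)$, since $e^{-1}[\RO(Y)] = \RO(X)$, and then invokes compactness to extract a finite subcover. With this in hand, $\bigvee\alpha[I_\alpha]$ is the union of all open sets of $X$ with compact closure in $X$, i.e.\ the union of all sets with compact $X$-closure.

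Next I would prove the cycle (1)$\Rightarrow$(3)$\Rightarrow$(2)$\Rightarrow$(1). For (1)$\Rightarrow$(3): local compactness of $X$ means every point has a neighborhood with compact closure, so $\bigcup\{\zeta(a)\cap X \mid a \in I_\alpha\} = X$, which translates to $\bigvee\alpha[I_\alpha] = 1$; roundness of $I_\alpha$ needs that whenever $a \in I_\alpha$, there is $c \in I_\alpha$ with $a \prec c$, which follows because in a locally compact Hausdorff space a compact set $K = \cl_Y(\zeta(a))\cap X$ can be surrounded by an open set $V$ of $X$ with $\cl_X(V)$ compact; pulling $V$ back to some $\zeta(c)\cap X$ with $V \subseteq \zeta(c)\cap X \subseteq \cl_Y(\zeta(c))\cap X \subseteq \cl_X(V)$ gives $c \in I_\alpha$ and $a \prec c$ (the last because $\cl_Y(\zeta(a)) \subseteq \zeta(c)$, as $\zeta(a) \subseteq V \cup (Y\setminus X)$ forces $\cl_Y(\zeta(a)) \subseteq \cl_Y(V) \subseteq \zeta(c)$ after an interior argument; here I would use Lemma~\ref{lem: 4.1} and density carefully). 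For (3)$\Rightarrow$(2): since $I_\alpha$ is a round ideal with join $1$, for each $b \in \dv{A}$ we have $\alpha(b) = \alpha(b) \wedge 1 = \alpha(b) \wedge \bigvee\alpha[I_\alpha]$, and an approximation argument—using (DV7), (M4), roundness of $I_\alpha$, and that $\alpha$ preserves finite meets—rewrites this as $\bigvee\{\alpha(a) \mid a \in I_\alpha, a \prec b\}$; the relation $a\prec b$ enters by combining $a' \in I_\alpha$ with $a'' \prec b$ via (DV6)/interpolation and (DV4) to land inside $I_\alpha$ below $b$. For (2)$\Rightarrow$(1): given $x \in X$, pick any $b \in \dv{A}$ with $x \in \zeta(b)\cap X$; by (2) and join-meet density, there is $a \in I_\alpha$ with $a \prec b$ and $x \in \zeta(a) \cap X$, and since $a$ is $\alpha$-compact, $\cl_Y(\zeta(a))\cap X$ is a compact neighborhood of $x$ in $X$, proving local compactness.

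The main obstacle I anticipate is the careful handling of closures across the inclusion $X \hookrightarrow Y$—specifically, keeping straight when $\cl_X(V) = \cl_Y(V) \cap X$ (true for any $V \subseteq X$) versus when a regular open set $V$ of $X$ has a regular open ``extension'' $W$ of $Y$ with $W \cap X = V$ (which is exactly Lemma~\ref{lem: 4.1}), and verifying $a \prec c$ in $\dv{A}$ from the topological inclusion $\cl_Y(\zeta(a)) \subseteq \zeta(c)$. The roundness half of (1)$\Rightarrow$(3) is where these subtleties concentrate, because one must produce an actual element $c$ of $\dv{A}$, not just an open set of $X$, and check the proximity $a \prec c$ holds in the algebra $\dv{A}$ (equivalently $\cl_Y\zeta(a) \subseteq \zeta(c)$); this is the step I would write out in the most detail. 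Everything else is a routine, if slightly lengthy, transcription between the algebraic and topological sides via Lemmas~\ref{prop: closed}, \ref{lem: extra properties of morphisms}, \ref{lem: 4.1}, and~\ref{lem: 5.3&6.2}.
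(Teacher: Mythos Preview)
Your proposal is correct, but it follows a different cycle from the paper: you argue (1)$\Rightarrow$(3)$\Rightarrow$(2)$\Rightarrow$(1), whereas the paper does (1)$\Rightarrow$(2)$\Rightarrow$(3)$\Rightarrow$(1). Your approach is uniformly topological, resting on the preliminary identification of $I_\alpha$ with $\{a : \cl_Y(\zeta(a))\cap X \text{ is compact}\}$ and then arguing each implication on the space side; in particular your (2)$\Rightarrow$(1) is a one-line ``pick an atom below the join'' argument, and your (3)$\Rightarrow$(2) goes via $\alpha(b)=\bigvee_{c\in I_\alpha}\alpha(b\wedge c)$ followed by (M4) (in fact roundness is not even needed for that step, only $\bigvee\alpha[I_\alpha]=1$). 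The paper, by contrast, handles (2)$\Rightarrow$(3) purely algebraically using $\alpha$-compactness twice, and for (3)$\Rightarrow$(1) proves a separate Claim that the open subset of $Y_\dv{A}$ corresponding to the round ideal $I_\alpha$ is exactly $\alpha_*[X_\dv{B}]$, via an argument with ends. That Claim is the real payoff of the paper's route: it is reused verbatim in the later analysis of one-point compactifications (Theorem~\ref{prop: minimal}), so the paper's detour earns a structural fact your more direct argument bypasses. One small point worth making explicit in your (1)$\Rightarrow$(3) roundness step: once $a\in I_\alpha$, the compact set $K=\cl_Y(\zeta(a))\cap X$ is closed in $Y$, and density of $X$ then forces $\cl_Y(\zeta(a))=\cl_Y(\zeta(a)\cap X)\subseteq K\subseteq X$, which is exactly what lets you conclude $\cl_Y(\zeta(a))\subseteq\zeta(c)$ from the $X$-side inclusion; you flagged this as the delicate spot, and that is the one line that makes it go through.
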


\begin{proof}
(1)$\Rightarrow$(2). Suppose that $X_{\dv{B}}$ is locally compact. The de Vries extension $\alpha : \dv{A} \to \dv{B}$ is isomorphic to $\alpha_*^{-1} : \RO(Y_\dv{A}) \to \wp(X_{\dv{B}})$. Under this isomorphism, $I_\alpha$ corresponds to $I := I_{\alpha_*^{-1}}$. To simplify notation we view $X_\dv{B}$ as a dense subspace of $Y_\dv{A}$. Let $V \in \RO(Y_\dv{A})$. Then the map $\alpha_*^{-1}$ sends $V$ to $V \cap X_\dv{B}$, so $V \cap X_\dv{B}$ is the union of those $U\in\RO(X_{\dv{B}})$ for which $\cl_{X_{\dv{B}}}(U)\subseteq V \cap X_\dv{B}$ and is compact. Since $\cl_{X_{\dv{B}}}(U)$ is compact, $\cl_{X_{\dv{B}}}(U)$ is closed in $Y_\dv{A}$. Thus, $\cl_{Y_\dv{A}}(U) = \cl_{X_{\dv{B}}}(U)$. By Lemma~\ref{lem: 4.1}, $U = W \cap X_\dv{B}$ for some $W \in \RO(Y_\dv{A})$. Because $X_\dv{B}$ is dense in $Y_\dv{A}$, we have $\cl_{Y_\dv{A}}(W) = \cl_{Y_\dv{A}}(U)$, which yields $\cl_{Y_\dv{A}}(W) \subseteq V \cap X_\dv{B} \subseteq V$. Therefore, $W \prec V$. Moreover, as $\cl_{Y_\dv{A}}(W) \cap X_\dv{B} = \cl_{Y_\dv{A}}(W) = \cl_{X_\dv{B}}(U)$ is compact, $W \in I$. Thus, $V \cap X_\dv{B}$ is the union of $W \cap X_\dv{B}$ for $W \in I$ and $W \prec V$, which yields (2).

(2)$\Rightarrow$(3). By Lemma~\ref{lem: I_alpha}, $I_\alpha$ is an ideal. By (2), $\alpha(1) = \bigvee \{ \alpha(a) \mid a \in I_\alpha\}$, so 
$1 = \bigvee \alpha[I_\alpha]$. To see that $I_\alpha$ is round, let $a \in I_\alpha$. Then $\lnot\alpha(\lnot a) \le 1 = \bigvee \alpha[I_\alpha]$. 
Since $a$ is $\alpha$-compact, there are $a_1,\dots, a_n \in I_\alpha$ with $\lnot\alpha(\lnot a) \le \alpha(a_1) \vee \cdots \vee \alpha(a_n)$. 
By Lemma~\ref{lem: extra properties of morphisms}, there is $b \in I_\alpha$ with $\lnot\alpha(\lnot a) \le \alpha(b)$. By (2), 
$\alpha(b)=\bigvee\{\alpha(c)\mid c\in I_\alpha,c\prec b\}$, so repeating the above argument with $1$ replaced by $b$ yields 
$c \prec b$ with $\lnot\alpha(\lnot a) \le \alpha(c)$. Thus, $\alpha(a) \le \lnot\alpha(\lnot a) \le \alpha(c)$, so $a \le c$, 
and hence $a \prec b$. Since $b \in I_\alpha$, this shows that $I_\alpha$ is round.

(3)$\Rightarrow$(1). Since $I_\alpha$ is a round ideal, by Lemma~\ref{prop: closed}, it corresponds to the open subset 
$U := \bigcup \{\zeta(a) \mid a \in I_\alpha\}$ of $Y_\dv{A}$.

\begin{claim} \label{claim: X open}
If $U= \bigcup \{\zeta(a) \mid a \in I_\alpha\}$, then $U = \alpha_*[X_\dv{B}]$.
\end{claim}

\begin{proofclaim}
Let $b$ be an atom of $\dv{B}$. Since $\bigvee \alpha[I_\alpha] = 1$ and $b$ is an atom, there is $a \in I_\alpha$ with $b \le \alpha(a)$. 
By Lemma~\ref{lem: 5.3&6.2}(1), $\alpha_*(b) \in \zeta(a)$. Thus, $\alpha_*[X_\dv{B}] \subseteq U$. For the reverse inclusion, let $y \in U$. 
Then there is $c \in I_\alpha$ with $c \in y$. Suppose $\bigwedge \alpha(y) = 0$. Then $1 = \bigvee \{ \lnot \alpha(a) \mid a \in y\}$. 
Since $y$ is a round filter, $c\in y$ implies there is $a\in y$ with $a\prec c$, so $\lnot\alpha(\lnot a)\prec\alpha(c)$, and hence 
$\lnot\alpha(c)\le\alpha(\lnot a)$. Therefore, $1=\bigvee \{ \alpha(\lnot a) \mid a \in y\}$, and so 
$\lnot\alpha(\lnot c)\le \bigvee \{ \alpha(\lnot a) \mid a \in y\}$. As $c \in I_\alpha$ and $y$ is 
closed under finite meets, there is $a \in y$ with $\lnot\alpha(\lnot c) \le \alpha(\lnot a)$. Therefore, 
$\alpha(a) \le \lnot \alpha(\lnot a) \le \alpha(\lnot c)$, yielding $a \le \lnot c$. Thus, $a \wedge c = 0$, 
which is false since $a \wedge c \in y$. Consequently, $\bigwedge \alpha(y) \ne 0$, and hence there is an atom 
$b$ with $b \le \bigwedge \alpha(y)$. This implies that $y \subseteq \alpha^{-1}({\uparrow}b)$, and since $y$ is round, 
$y \subseteq \thu \alpha^{-1}({\uparrow}b) = \alpha_*(b)$. Because $y$ is an end, we have equality, and so 
$y \in \alpha_*[X_\dv{B}]$. This completes the proof that $\alpha_*[X_\dv{B}] = U$.
\end{proofclaim}

From the claim we see that $\alpha_*[X_\dv{B}]$ is open in $Y_\dv{A}$, which implies that $X_\dv{B}$ is locally compact.
\end{proof}

\begin{definition}
We call a de Vries extension $\alpha : \dv{A} \to \dv{B}$ \emph{locally compact} provided
\[
\alpha(b) = \bigvee \{ \alpha(a) \mid a\in I_\alpha, a \prec b \} \mbox{ for all } b \in \dv{A}.
\]
\end{definition}

\begin{remark}\label{rem: locally compact}
By Theorem~\ref{thm: duality}, we can phrase Theorem~\ref{prop: I_alpha} dually as follows: Let $e : X \to Y$ be a compactification and let $\alpha = e^{-1} : \RO(Y) \to \wp(X)$ be the corresponding de Vries extension. Then the following are equivalent.
\begin{enumerate}
\item $X$ is locally compact.
\item $\alpha$ is locally compact.
\item The ideal $I_{\alpha}$ is round and $\bigcup \alpha[I_{\alpha}] = X$.
\end{enumerate}
In particular, $X$ is locally compact iff $s^{-1} : \RO(\beta X) \to \wp(X)$ is a locally compact de Vries extension.
\end{remark}

We next show that not every locally compact de Vries extension is maximal.

\begin{example}
Let $X$ be the set of natural numbers equipped with the discrete topology, and let $c : X \to \omega X$ be the one-point compactification of $X$ \cite[Thm.~3.5.11]{Eng89}. By Remark~\ref{rem: locally compact}, the de Vries extension $c^{-1} : \RO(Y) \to \wp(X)$ is locally compact. However, since $c$ is not isomorphic to the Stone-\v{C}ech compactification of $X$, the de Vries extension $c^{-1}$ is not maximal by \cite[Thm.~6.4]{BMO18a}.
\end{example}

\begin{definition}
\begin{enumerate}
\item[]
\item Let $\ldeve$ be the full subcategory of $\Mdeve$ consisting of locally compact maximal de Vries extensions.
\item Let $\LKHaus$ be the full subcategory of $\creg$ consisting of locally compact spaces.
\end{enumerate}
\end{definition}

\begin{theorem}\label{thm: loc comp}
There is a dual equivalence between $\LKHaus$ and $\ldeve$.
\end{theorem}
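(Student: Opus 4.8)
The plan is to obtain the dual equivalence between $\LKHaus$ and $\ldeve$ by restricting the dual equivalence of Theorem~\ref{thm: comp reg} between $\creg$ and $\Mdeve$. Since $\LKHaus$ is by definition the full subcategory of $\creg$ consisting of locally compact spaces and $\ldeve$ is by definition the full subcategory of $\Mdeve$ consisting of locally compact maximal de Vries extensions, it suffices to check that the dual equivalence of Theorem~\ref{thm: comp reg} carries objects of $\LKHaus$ to objects of $\ldeve$ and conversely. Both functors in Theorem~\ref{thm: comp reg} are essentially the composites of the functors $\sf E$, $\sf C$ of Theorem~\ref{thm: duality} with the equivalence between $\creg$ and the full subcategory of $\C$ on Stone-\v{C}ech compactifications; concretely, a normal... rather, a completely regular space $X$ is sent to the maximal de Vries extension $s^{-1} : \RO(\beta X) \to \wp(X)$, and a maximal de Vries extension $\alpha : \dv A \to \dv B$ is sent to the space $X_\dv{B}$ (which, by Theorem~\ref{thm: Stone-Cech}, is such that $\alpha_* : X_\dv{B} \to Y_\dv{A}$ is the Stone-\v{C}ech compactification of $X_\dv{B}$).

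The two inclusions I must verify are then immediate from Remark~\ref{rem: locally compact} (equivalently Theorem~\ref{prop: I_alpha}). First, if $X \in \LKHaus$, i.e.\ $X$ is locally compact (and completely regular, hence Tychonoff), then by the last sentence of Remark~\ref{rem: locally compact} the de Vries extension $s^{-1} : \RO(\beta X) \to \wp(X)$ is a locally compact de Vries extension; it is maximal by Theorem~\ref{thm: Stone-Cech}, so it lies in $\ldeve$. Conversely, if $\alpha : \dv A \to \dv B$ is an object of $\ldeve$, then $\alpha$ is a locally compact maximal de Vries extension, so by Theorem~\ref{prop: I_alpha} (the equivalence of (1) and (2)) the space $X_\dv{B} = \dv{B}_*$ is locally compact, and hence $X_\dv{B} \in \LKHaus$. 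Thus the object assignments of the two functors of Theorem~\ref{thm: comp reg} restrict to the stated subcategories.

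Finally, since both subcategories are full and the functors of Theorem~\ref{thm: comp reg} together with the natural isomorphisms witnessing that dual equivalence restrict verbatim to full subcategories closed under the respective object assignments, the restricted functors and restricted natural isomorphisms give a dual equivalence between $\LKHaus$ and $\ldeve$. I expect no real obstacle here: the only substantive content is Theorem~\ref{prop: I_alpha}, already established, which makes ``$X$ locally compact'' and ``$\alpha$ locally compact'' correspond under the duality; the remainder is the routine observation that an equivalence of categories restricts to full subcategories matched by the object maps. The one point that deserves an explicit word is that an object of $\ldeve$ is required a priori to be maximal, so that Theorem~\ref{thm: Stone-Cech} applies and identifies $\alpha_*$ with the Stone-\v{C}ech compactification of $X_\dv{B}$; this is exactly why $\ldeve$ was defined as a subcategory of $\Mdeve$ rather than of $\deve$, the preceding example showing that locally compact de Vries extensions need not be maximal.
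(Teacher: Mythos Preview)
Your proof is correct and follows exactly the paper's approach: the paper's proof is the one-line ``Apply Theorems~\ref{thm: comp reg} and~\ref{prop: I_alpha},'' and you have simply unpacked what that application amounts to, namely restricting the dual equivalence $\creg \simeq \Mdeve$ to the full subcategories matched under Theorem~\ref{prop: I_alpha}.
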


\begin{proof}
Apply Theorems~\ref{thm: comp reg} and~\ref{prop: I_alpha}.
\end{proof}

Another duality for $\LKHaus$ was obtained by Dimov \cite{Dim10}. In Section~6 we will show how to derive Dimov's duality from Theorem~\ref{thm: loc comp}.

\section{Compact de Vries extensions}

In this section we introduce compact de Vries extensions and prove that the category of compact de Vries extensions is dually equivalent to 
the category of compact Hausdorff spaces. This yields that the category of compact de Vries extensions is equivalent to the category of de 
Vries algebras. We give a direct proof of this equivalence, thus providing a different perspective on de Vries duality. 

\begin{definition}
\begin{enumerate}
\item[]
\item We call a de Vries extension $\alpha : \dv{A} \to \dv{B}$ \emph{compact} provided the following axiom holds: If $F$ is a round filter and $I$ a round ideal of $\dv{A}$ with $\bigwedge \alpha[F] \le \bigvee \alpha[I]$, then $F \cap I \ne \varnothing$.
\item Let $\Cdeve$ be the full subcategory of $\deve$ consisting of compact de Vries extensions.
\end{enumerate}
\end{definition}

\begin{remark} \label{rem: compact implies normal and maximal}
Every compact de Vries extension is normal. To see this, let $\alpha : \dv{A} \to \dv{B}$ be a compact de Vries extension, $F$ a round filter, and $I$ a round ideal of $\dv{A}$ with $\bigwedge \alpha[F] \le \bigvee \alpha[I]$. Then there is $a \in F \cap I$. Since $I$ is round, there is $b \in I$ with $a \prec b$. Therefore, $\bigwedge \alpha[F] \le \alpha(a)$ and $\alpha(b) \le \bigvee \alpha[I]$. Thus, $\alpha$ is normal. Consequently, $\Cdeve$ is a full subcategory of $\ndeve$, and hence also of $\Mdeve$.
\end{remark}

\begin{theorem} \label{prop: equivalence of compact}
For a de Vries extension $\alpha : \dv{A} \to \dv{B}$, the following are equivalent.
\begin{enumerate}
\item $\alpha$ is a compact de Vries extension.
\item $X_{\dv{B}}$ is compact.
\item $I_\alpha = \dv{A}$.
\end{enumerate}
\end{theorem}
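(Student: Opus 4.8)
The plan is to establish the cycle $(1)\Rightarrow(2)\Rightarrow(3)\Rightarrow(1)$. I would first record one auxiliary fact: if $F$ is a round filter of $\dv{A}$, then $J_F:=\{\lnot a\mid a\in F\}$ is a round ideal of $\dv{A}$, and $\lnot\bigvee\alpha[J_F]\le\bigwedge\alpha[F]$. That $J_F$ is a round ideal is routine from (DV2), (DV5) and the roundness of $F$. For the inequality, given $a\in F$ pick $a'\in F$ with $a'\prec a$ (roundness); since $\dv{B}$ is extremally disconnected, (M3) gives $\lnot\alpha(\lnot a')\le\alpha(a)$, so $\lnot\bigvee\alpha[J_F]=\bigwedge\{\lnot\alpha(j)\mid j\in J_F\}\le\lnot\alpha(\lnot a')\le\alpha(a)$; taking the meet over $a\in F$ gives the claim.

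For $(1)\Rightarrow(2)$, the crux is to show that $\bigwedge\alpha[F]\ne 0$ for every \emph{proper} round filter $F$ --- this is precisely what the compactness axiom buys us. Indeed, if $\bigwedge\alpha[F]=0$, the auxiliary fact forces $\bigvee\alpha[J_F]=1$; applying the compactness axiom to the round filter $\{1\}$ and the round ideal $J_F$ (here $\bigwedge\alpha[\{1\}]=\alpha(1)\le 1=\bigvee\alpha[J_F]$) yields $1\in\{1\}\cap J_F$, hence $0\in F$, contradicting properness. Now let $y\in Y_\dv{A}$ be an end. Then $\bigwedge\alpha[y]\ne 0$, so by atomicity of $\dv{B}$ there is an atom $b\le\bigwedge\alpha[y]$. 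Thus $y\subseteq\alpha^{-1}({\uparrow}b)$, and since $y$ is round, $y=\thu y\subseteq\thu\alpha^{-1}({\uparrow}b)=\alpha_*(b)$; as $\alpha_*(b)$ is a proper round filter and $y$ is a maximal one, $y=\alpha_*(b)\in\alpha_*[X_\dv{B}]$. Hence $\alpha_*$ is onto, and being a homeomorphic embedding it is a homeomorphism onto $Y_\dv{A}$, so $X_\dv{B}$ is compact.

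For $(2)\Rightarrow(3)$, I would transfer to $\wp(X_\dv{B})$ via the Tarski isomorphism $\vartheta_\dv{B}$. For $a\in\dv{A}$ we have $\vartheta_\dv{B}(\alpha(a))=\{x\in X_\dv{B}\mid x\le\alpha(a)\}=\alpha_*^{-1}(\zeta(a))$, the last equality by Lemma~\ref{lem: 5.3&6.2}(1); this is open since $\alpha_*$ is continuous, hence $\vartheta_\dv{B}(\lnot\alpha(\lnot a))=X_\dv{B}\setminus\vartheta_\dv{B}(\alpha(\lnot a))$ is closed, and therefore compact once $X_\dv{B}$ is. If $\lnot\alpha(\lnot a)\le\bigvee\alpha[S]$, then $\vartheta_\dv{B}(\lnot\alpha(\lnot a))\subseteq\bigcup_{s\in S}\alpha_*^{-1}(\zeta(s))$, so by compactness a finite $T\subseteq S$ already covers it, and applying $\vartheta_\dv{B}^{-1}$ gives $\lnot\alpha(\lnot a)\le\bigvee\alpha[T]$. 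Thus every $a\in\dv{A}$ is $\alpha$-compact, i.e.\ $I_\alpha=\dv{A}$.

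For $(3)\Rightarrow(1)$, let $F$ be a round filter and $I$ a round ideal of $\dv{A}$ with $\bigwedge\alpha[F]\le\bigvee\alpha[I]$. By the auxiliary fact, $\lnot\bigvee\alpha[J_F]\le\bigvee\alpha[I]$, so $1\le\bigvee\alpha[J_F]\vee\bigvee\alpha[I]=\bigvee\alpha[J_F\cup I]$. Since $I_\alpha=\dv{A}$, the top element is $\alpha$-compact and $\lnot\alpha(\lnot 1)=1$, so there is a finite $T\subseteq J_F\cup I$ with $1\le\bigvee\alpha[T]$. Put $j=\bigvee(T\cap J_F)\in J_F$ and $i=\bigvee(T\cap I)\in I$; then $t\le j\vee i$ for each $t\in T$, so $1\le\bigvee\alpha[T]\le\alpha(j\vee i)$. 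Writing $j=\lnot f$ with $f\in F$, we have $j\vee i=\lnot(f\wedge\lnot i)$, so $\alpha(\lnot(f\wedge\lnot i))=1$; by Lemma~\ref{lem: extra properties of morphisms}(1) this forces $\alpha(f\wedge\lnot i)=0=\alpha(0)$, and injectivity of $\alpha$ gives $f\wedge\lnot i=0$, i.e.\ $f\le i$. Hence $f\in I$ (as $I$ is downward closed) and $f\in F$, so $F\cap I\ne\varnothing$. The bookkeeping here --- collapsing the finite set $T$ into the single pair $\lnot f,i$ and invoking that $\alpha$ is injective --- is the most computational point but is routine; the real content of the theorem sits in the implication $(1)\Rightarrow(2)$, where one must recognize that the compactness axiom is exactly the statement that proper round filters have nonzero image-meet.
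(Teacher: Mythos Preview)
Your proof is correct and follows essentially the same cycle $(1)\Rightarrow(2)\Rightarrow(3)\Rightarrow(1)$ as the paper, with the same key ideas in each step: showing ends have nonzero image-meet, transferring to $\wp(X_\dv{B})$, and using $\alpha$-compactness of $1$ together with the round ideal $J_F$ of negations. One cosmetic difference: in $(1)\Rightarrow(2)$ the paper applies the compactness axiom directly to the pair $(F,I)=(x,\{0\})$ to conclude $0\in x$, which is a shade quicker than your detour through $(\{1\},J_F)$; and in $(2)\Rightarrow(3)$ the paper simply observes that $1\in I_\alpha$ (since $\lnot\alpha(\lnot 1)$ corresponds to the compact space $X_\dv{B}$) and invokes Lemma~\ref{lem: I_alpha}, whereas you verify $\alpha$-compactness for every $a$ directly.
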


\begin{proof}
(1)$\Rightarrow$(2): Let $x$ be an end of $\dv{A}$. If $\bigwedge \alpha[x] = 0$, then for $F = x$ and $I = \{0\}$, (1) implies that $0 \in x$, which is false. Therefore, there is an atom $b$ of $\dv{B}$ with $b \le \bigwedge \alpha[x]$. This means $x \subseteq \alpha^{-1}({\uparrow}b)$, and so $x \subseteq \thu \alpha^{-1}({\uparrow}b)$. Since $\thu \alpha^{-1}({\uparrow}b)$ is a round filter and $x$ is an end, we obtain $x = \thu \alpha^{-1}({\uparrow}b)$, and so $x = \alpha_*(b)$. Thus, $\alpha_*[X_\dv{B}] = Y_\dv{A}$, and hence $X_\dv{B}$ is compact.

(2)$\Rightarrow$(3): Suppose that $X_\dv{B}$ is compact. Since $\alpha$ is isomorphic to $\alpha_*^{-1} : \RO(Y_\dv{A}) \to \wp(X_\dv{B})$ and $\lnot\alpha_*^{-1}(\lnot Y_\dv{A}) = X_\dv{B}$ is compact, we see that $1 \in I_\alpha$, so $I_\alpha = \dv{A}$.

(3)$\Rightarrow$(1): Suppose that $I_\alpha = \dv{A}$. Let $F$ be a round filter and $I$ a round ideal of $\dv{A}$ with $\bigwedge \alpha[F] \le \bigvee \alpha[I]$. Then $\lnot \bigwedge \alpha[F] \vee \bigvee \alpha[I] = 1$. We show that $F \cap I \ne \varnothing$. Let $J = \{ c \in \dv{A} \mid \lnot c \in F\}$. Since $F$ is a round filter, $J$ is a round ideal. We show that $\lnot\bigwedge\alpha[F] = \bigvee \alpha[J]$. Since $F$ is round, $b \in F$ implies that there is $a \in F$ with $a \prec b$. Then $\alpha(a) \le \lnot\alpha(\lnot a) \le \alpha(b)$. Therefore,
\begin{align*}
\lnot\bigwedge \alpha[F] &= \lnot\bigwedge \{ \lnot\alpha(\lnot a) \mid a \in F\} =  \bigvee \{ \alpha(\lnot a) \mid a \in F\} \\
&=  \bigvee \{ \alpha(c) \mid c \in J\} = \bigvee \alpha[J].
\end{align*}
From this equality we have $1 = \bigvee \alpha[J] \vee \bigvee \alpha[I]$. Since $1 \in I_\alpha$, there are $a_1,\dots, a_n \in J$ and $b_1, \dots, b_m \in I$ with $1 = \alpha(a_1) \vee \cdots \vee \alpha(a_n) \vee \alpha(b_1) \vee \cdots \vee \alpha(b_m)$. Let $a = a_1 \vee \cdots \vee a_n$ and $b = b_1 \vee \cdots \vee b_m$. Then $a \in J$, $b \in I$, $\alpha(a_1) \vee \cdots \vee \alpha(a_n) \le \alpha(a)$, and $\alpha(b_1) \vee \cdots \vee \alpha(b_m) \le \alpha(b)$. Therefore, $1 = \alpha(a) \vee \alpha(b)$, and so $\lnot\alpha(\lnot a) \vee \alpha(b) = 1$, which gives $\alpha(\lnot a) \le \alpha(b)$. This implies $\lnot a \le b$, so $\lnot a \in I$. Because $a \in J$, we have $\lnot a \in F$, and hence $\lnot a \in F \cap I$. Thus, $\alpha : \dv{A} \to \dv{B}$ is a compact de Vries extension.
\end{proof}

\begin{remark}\label{rem: compact}
By Theorem~\ref{thm: duality}, we can phrase Theorem~\ref{prop: equivalence of compact} dually as follows: Let $e : X \to Y$ be a compactification and let $\alpha = e^{-1} : \RO(Y) \to \wp(X)$ be the corresponding de Vries extension. Then the following are equivalent.
\begin{enumerate}
\item $X$ is compact.
\item $\alpha$ is compact.
\item $I_{\alpha} = \RO(Y)$.
\end{enumerate}
In particular, $X$ is compact iff $s^{-1} : \RO(\beta X) \to \wp(X)$ is a compact de Vries extension.
\end{remark}

Since $\KHaus$ is a full subcategory of $\creg$, we may interpret it as a full subcategory of $\C$. This interpretation sends a compact Hausdorff space $X$ to the compactification $ X \to X$.

\begin{theorem} \label{thm: compact}
$\Cdeve$ is dually equivalent to $\KHaus$.
\end{theorem}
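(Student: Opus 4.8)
The plan is to obtain Theorem~\ref{thm: compact} by restricting the dual equivalence between $\C$ and $\deve$ of Theorem~\ref{thm: duality} to appropriate full subcategories, and then identifying the resulting full subcategory of $\C$ with $\KHaus$.

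First I would introduce the full subcategory $\mathcal K$ of $\C$ whose objects are the compactifications $e : X \to Y$ with $X$ compact. By Remark~\ref{rem: compact}, which is the dual form of Theorem~\ref{prop: equivalence of compact}, a compactification $e : X \to Y$ lies in $\mathcal K$ precisely when ${\sf E}(e) = e^{-1} : \RO(Y) \to \wp(X)$ is a compact de Vries extension; and by Theorem~\ref{prop: equivalence of compact} itself, a de Vries extension $\alpha : \dv A \to \dv B$ is compact precisely when ${\sf C}(\alpha) = (\alpha_* : X_\dv{B} \to Y_\dv{A})$ has $X_\dv{B}$ compact, that is, lies in $\mathcal K$. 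Since being a compact de Vries extension is an isomorphism-invariant property, these two biconditionals say exactly that the full subcategories $\mathcal K \subseteq \C$ and $\Cdeve \subseteq \deve$ correspond to one another under the dual equivalence of Theorem~\ref{thm: duality}; hence $\sf E$ and $\sf C$ restrict to a dual equivalence between $\mathcal K$ and $\Cdeve$.

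It then remains to show that $\mathcal K$ is equivalent to $\KHaus$. I would define $\Phi : \KHaus \to \mathcal K$ by sending a compact Hausdorff space $X$ to the compactification $\mathrm{id}_X : X \to X$ and a continuous map $f$ to the pair $(f, f)$; this is a functor with values in $\mathcal K$. A morphism in $\C$ from $\mathrm{id}_X : X \to X$ to $\mathrm{id}_{X'} : X' \to X'$ is a pair $(f, g)$ of continuous maps $X \to X'$ with $\mathrm{id}_{X'} \circ f = g \circ \mathrm{id}_X$, so $f = g$; therefore every such morphism is of the form $\Phi(f)$, and $\Phi$ is full and faithful. For essential surjectivity, let $e : X \to Y$ be an object of $\mathcal K$. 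Then $e[X]$ is a continuous image of the compact space $X$, hence compact, hence closed in the Hausdorff space $Y$; being dense as well, $e[X] = Y$, so $e$ is a homeomorphism. Consequently $(\mathrm{id}_X, e^{-1})$ is an isomorphism in $\C$ from $e : X \to Y$ to $\Phi(X) = (\mathrm{id}_X : X \to X)$, so $\Phi$ is essentially surjective, hence an equivalence. Composing with the previous step shows that $\Cdeve$ is dually equivalent to $\KHaus$.

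I do not anticipate a real obstacle here: once Theorems~\ref{thm: duality} and~\ref{prop: equivalence of compact} are in hand, the rest is categorical bookkeeping. The two points to handle with a little care are the verification that the dual equivalence of Theorem~\ref{thm: duality} does restrict to the chosen full subcategories, which uses the biconditional of Theorem~\ref{prop: equivalence of compact} and Remark~\ref{rem: compact} in both directions, and the elementary topological observation that any compactification of a compact Hausdorff space is a homeomorphism, which is what collapses $\mathcal K$ onto a copy of $\KHaus$.
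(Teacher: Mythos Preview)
Your proof is correct, but it takes a different route from the paper's. The paper's argument is a one-liner: by Remark~\ref{rem: compact implies normal and maximal} every compact de Vries extension is normal and hence maximal, so $\Cdeve$ is a full subcategory of $\Mdeve$; the result then follows by restricting the dual equivalence $\creg \leftrightarrow \Mdeve$ of Theorem~\ref{thm: comp reg} using Theorem~\ref{prop: equivalence of compact}. In other words, the paper passes through maximality and the Stone--\v{C}ech picture.

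You instead bypass maximality entirely and restrict the larger duality $\C \leftrightarrow \deve$ of Theorem~\ref{thm: duality} directly, introducing the intermediate category $\mathcal K$ of compactifications of compact spaces and then collapsing $\mathcal K$ onto $\KHaus$ via the observation that a compactification of a compact Hausdorff space is a homeomorphism. Your approach is more self-contained: it needs only Theorems~\ref{thm: duality} and~\ref{prop: equivalence of compact} (or Remark~\ref{rem: compact}), and avoids both Remark~\ref{rem: compact implies normal and maximal} and Theorem~\ref{thm: comp reg}. The paper's approach, on the other hand, situates the result within the $\Mdeve$ hierarchy developed earlier and makes the inclusion $\Cdeve \subseteq \ndeve \subseteq \Mdeve$ do the work; this is shorter given what has already been established and reinforces the paper's narrative that these special classes of extensions sit inside the maximal ones.
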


\begin{proof}
By Remark~\ref{rem: compact implies normal and maximal}, $\Cdeve$ is a full subcategory of $\Mdeve$. The result then follows from Theorems~\ref{thm: comp reg} and~\ref{prop: equivalence of compact}.
\end{proof}

This together with de Vries duality yields that $\Cdeve$ is equivalent $\dev$. We give a direct proof of this result, which provides a different perspective on de Vries duality.

\begin{theorem} \label{thm: cdeve = dev}
$\Cdeve$ is equivalent to $\dev$.
\end{theorem}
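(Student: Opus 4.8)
The plan is to exhibit explicit quasi-inverse functors $G : \Cdeve \to \dev$ and $F : \dev \to \Cdeve$. The functor $G$ is the obvious projection: it sends a compact de Vries extension $\alpha : \dv A \to \dv B$ to $\dv A$ and a morphism $(\rho,\sigma)$ to $\rho$; functoriality is immediate because composition in $\deve$ is componentwise. All the real work is in constructing $F$ and proving $FG \cong 1_{\Cdeve}$ (we will in fact get $GF = 1_{\dev}$ on the nose).

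On objects, $F$ sends a de Vries algebra $\dv A$ to the map $j_{\dv A} : \dv A \to \wp(Y_{\dv A})$, $j_{\dv A}(a) = \zeta_{\dv A}(a) = \{ x \in Y_{\dv A} \mid a \in x \}$, where $\wp(Y_{\dv A})$ carries the proximity $\le$ (so it is an atomic extremally disconnected de Vries algebra). One checks that $j_{\dv A}$ is a $1$-$1$ de Vries morphism with image $\RO(Y_{\dv A})$: (M1)--(M3) are routine rewrites of the fact that $\zeta_{\dv A} : \dv A \to (\RO(Y_{\dv A}),\prec)$ is a de Vries isomorphism, (M4) uses that every end is a round filter, and join-meet density of $\RO(Y_{\dv A})$ in $\wp(Y_{\dv A})$ uses that $Y_{\dv A}$ is regular (singletons are intersections of regular opens). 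Since $\lnot j_{\dv A}(\lnot a) = \cl_{Y_{\dv A}}(\zeta_{\dv A}(a))$ is a closed, hence compact, subset of $Y_{\dv A}$, every $a \in \dv A$ is $j_{\dv A}$-compact, so $I_{j_{\dv A}} = \dv A$ and $j_{\dv A}$ is a compact de Vries extension by Theorem~\ref{prop: equivalence of compact}. On morphisms, $F$ sends $\rho : \dv A \to \dv A'$ to $(\rho, (\rho_*)^{-1})$, where $\rho_* : Y_{\dv A'} \to Y_{\dv A}$ is the de Vries dual of $\rho$ and $(\rho_*)^{-1} : \wp(Y_{\dv A}) \to \wp(Y_{\dv A'})$ is its Tarski dual. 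The key point is that $(\rho,(\rho_*)^{-1})$ is a morphism in $\deve$, i.e.\ $(\rho_*)^{-1} \circ j_{\dv A} = j_{\dv A'} \star \rho$: evaluating both sides at $a \in \dv A$ and using $\rho_*(y) = \thu \rho^{-1}(y)$, the left side is $\{ y \in Y_{\dv A'} \mid \rho(c) \in y \text{ for some } c \prec a \}$ and the right side is $\bigcup \{ \zeta_{\dv A'}(\rho(c)) \mid c \prec a \}$, and these coincide. Functoriality of $F$ then follows from contravariance of $(-)_*$ and of the preimage operation, and plainly $GF = 1_{\dev}$.

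It remains to build a natural isomorphism $\epsilon : FG \to 1_{\Cdeve}$. Fix a compact de Vries extension $\alpha : \dv A \to \dv B$, so $FG\alpha$ is $j_{\dv A} : \dv A \to \wp(Y_{\dv A})$. This is where compactness enters: by Theorem~\ref{prop: equivalence of compact} the space $X_{\dv B}$ is compact, so the compactification $\alpha_* : X_{\dv B} \to Y_{\dv A}$ is a dense embedding with compact (hence closed) image, therefore surjective, and a continuous bijection from a compact space onto a Hausdorff space --- that is, $\alpha_*$ is a homeomorphism. Hence $\nu := \vartheta_{\dv B}^{-1} \circ (\alpha_*)^{-1} : \wp(Y_{\dv A}) \to \dv B$ is a Boolean isomorphism, and Lemma~\ref{lem: 5.3&6.2}(1) gives $(\alpha_*)^{-1}(\zeta_{\dv A}(a)) = \{ b \in X_{\dv B} \mid b \le \alpha(a)\} = \vartheta_{\dv B}(\alpha(a))$, whence $\nu \circ j_{\dv A} = \alpha$; thus $\epsilon_\alpha := (1_{\dv A}, \nu)$ is an isomorphism in $\deve$. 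Naturality of $\epsilon$ in a morphism $(\rho,\sigma) : \alpha \to \alpha'$ reduces, after transporting everything to the powerset algebras via the $\vartheta$'s and using that $\vartheta$ together with preimage turns $\sigma$ into the pullback along $\sigma_+$, to the single equality $\alpha_* \circ \sigma_+ = \rho_* \circ \alpha'_*$, which is exactly the commutativity of the square attached to $(\rho,\sigma)$ by the functor ${\sf C} : \deve \to \C$. This finishes the construction, so $F$ and $G$ are quasi-inverse and $\Cdeve$ is equivalent to $\dev$.

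I expect the main obstacle to be the content of the third paragraph: the ``rigidity'' of compact de Vries extensions, namely that both the codomain $\dv B$ and the map $\alpha$ are determined up to canonical isomorphism by $\dv A$ alone. Everything else is bookkeeping with the dualities already in hand (de Vries duality, Tarski duality, and the $\C$--$\deve$ duality of Theorem~\ref{thm: duality}), but the step that genuinely uses the compactness hypothesis --- without which $\alpha_*$ is merely a dense embedding --- is the promotion of $\alpha_* : X_{\dv B} \to Y_{\dv A}$ to a homeomorphism. (Alternatively, one may argue that $G$ is faithful because a complete Boolean homomorphism on $\dv B$ is determined by its restriction to the join-meet dense set $\alpha[\dv A]$, that $G$ is essentially surjective via the $j_{\dv A}$ above, and that $G$ is full by the same homeomorphism argument used to produce $\nu$; the quasi-inverse formulation is chosen because it makes the naturality statement cleanest.)
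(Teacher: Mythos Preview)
Your proposal is correct and takes essentially the same approach as the paper: the paper defines the same functor $F$ (there called ${\sf D}$), verifies the same commutativity $(\rho_*)^{-1}\circ\zeta_{\dv A}=\zeta_{\dv A'}\star\rho$, and uses the same key step that compactness of $X_{\dv B}$ makes $\alpha_*:X_{\dv B}\to Y_{\dv A}$ a homeomorphism. The only difference is packaging: the paper shows ${\sf D}$ is full, faithful, and essentially surjective (fullness via the join-meet density argument you mention in your parenthetical alternative), whereas you build the explicit quasi-inverse $G$ and the natural isomorphism $\epsilon$; the mathematical content is identical.
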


\begin{proof}
Define a functor $\sf{D} : \dev \to \Cdeve$ as follows. If $\dv{A}$ is a de Vries algebra, then $\zeta_\dv{A} : \dv{A} \to \wp(Y_\dv{A})$ is a de Vries extension, which is compact by Theorem~\ref{prop: equivalence of compact}, and we set $\sf{D}(\dv{A}) = \zeta_\dv{A}$. To define $\sf{D}$ on morphisms, for a de Vries morphism $\rho : \dv{A} \to \dv{A'}$, consider the diagram
\[
\xymatrix@C5pc{
\dv{A} \ar[r]^{\zeta_\dv{A}} \ar[d]_{\rho} & \wp(Y_\dv{A}) \ar[d]^{\rho_*^{-1}} \\
\dv{A'} \ar[r]_{\zeta_{\dv{A'}}} & \wp(Y_{\dv{A'}}).
}
\]
For $a \in \dv{A}$, we have
\[
\rho_*^{-1}(\zeta_{\dv{A}}(a)) = \{ y \in Y_{\dv{A'}} \mid a \in \rho_*(y) \} =
\{ y \in Y_{\dv{A'}} \mid \exists c\prec a : \rho(c) \in y\}.
\]
Also, $(\zeta_{\dv{A'}}\star\rho)(a) = \bigvee \{ \zeta_{\dv{A'}}(\rho(c)) \mid c \prec a\}$. Since $\zeta_{\dv{A'}}(\rho(c)) = \{ y \in Y_{\dv{A'}} \mid \rho(c) \in y\}$ and $\bigvee$ in $\wp(Y_{\dv{A'}})$ is union, we see that $\rho_*^{-1} \circ \zeta_\dv{A} = \zeta_{\dv{A'}} \star \rho$. Therefore, $(\rho, \rho_*^{-1})$ is a morphism in $\deve$, and we set $\sf{D}(\rho) = (\rho, \rho_*^{-1})$. It is clear that $\sf{D}$ sends identity morphisms to identity morphisms. If $\rho : \dv{A} \to \dv{A'}$ and $\tau : \dv{A'} \to \dv{A''}$ are morphisms in $\dev$, then $\sf{D}(\tau\star\rho) = (\tau\star\rho, (\tau\star\rho)_*^{-1})$. Since $(\tau\star\rho)_*^{-1} = (\rho_*\circ\tau_*)^{-1} = \tau_*^{-1} \circ \rho_*^{-1}$, we see that $\sf{D}(\tau\star\rho) = (\tau\star\rho, \tau_*^{-1}\circ\rho_*^{-1}) = (\tau, \tau_*^{-1}) \circ (\rho, \rho_*^{-1})$. Thus, $\sf{D}$ is a functor.

Since $\sf{D}(\rho) = (\rho, \rho_*^{-1})$, it is clear that $\sf{D}$ is faithful. To see that $\sf{D}$ is full, let $(\rho, \sigma)$ be a morphism between $\zeta_\dv{A} : \dv{A} \to \wp(Y_\dv{A})$ and $\zeta_\dv{A'} : \dv{A'} \to \wp(Y_\dv{A'})$. Then $\sigma\circ\zeta_\dv{A} = \zeta_\dv{A'}\star\rho$. If $a \in \dv{A}$, then as we saw above, $\sigma(\zeta_\dv{A}(a)) = (\zeta_{\dv{A}'} \star \rho)(a) = \rho_*^{-1}(\zeta_\dv{A}(a))$. Therefore, since $\zeta_{\dv{A}}[\dv{A}]$ is join-meet dense in $\wp(Y_\dv{A})$ and $\sigma, \rho_*^{-1}$ are both complete Boolean homomorphisms, we conclude that $\sigma = \rho_*^{-1}$. Thus, $(\rho, \sigma) = (\rho, \rho_*^{-1}) = \sf{D}(\rho)$, and hence $\sf{D}$ is full.

Let $\alpha : \dv{A} \to \dv{B}$ be a compact extension. Then $X_\dv{B}$ is compact by Theorem~\ref{prop: equivalence of compact}, so $\alpha_* : X_\dv{B} \to Y_\dv{A}$ is a homeomorphism. Therefore, $\alpha$ is isomorphic to $\zeta_\dv{A} : \dv{A} \to \wp(Y_\dv{A})$. Thus, $\sf{D} : \dev \to \Cdeve$ is an equivalence of categories \cite[Thm.~IV.4.1]{Mac71}.
\end{proof}

\section{One-point Compactifications and minimal de Vries extensions}


In this section we give an algebraic description of the one-point compactification of a non-compact locally compact Hausdorff space 
by introducing the concept of a minimal de Vries extension. 

As we pointed out in Remark~\ref{rem: equiv versus iso}, a compactification 
$e : X \to Y$ is equivalent to the Stone-\v{C}ech compactification $s : X \to \beta X$ iff $e$ is isomorphic to $s$ in $\C$. We next show 
that if $X$ is non-compact locally compact, then a corresponding result holds for the one-point compactification of $X$.

\begin{lemma} \label{lem: equivalent to alpha}
Let $X$ be a non-compact locally compact Hausdorff space and let $e : X \to Y$ be a compactification. If $e$ is isomorphic to the one-point compactification $c : X \to \omega X$ in $\C$, then $e$ and $c$ are equivalent.
\end{lemma}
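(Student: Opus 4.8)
The plan is to reduce the statement about isomorphism in $\C$ to a statement about equivalence of compactifications by exploiting the rigidity of the one-point compactification. Recall from Remark~\ref{rem: equiv versus iso} that two compactifications $e : X \to Y$ and $e' : X \to Y'$ are \emph{equivalent} if there is a homeomorphism $h : Y \to Y'$ with $h \circ e = e'$, whereas being isomorphic in $\C$ only provides \emph{some} morphism pair $(f,g)$ and $(f',g')$ witnessing that $e$ and $c$ are inverse to each other in $\C$; the potential gap is that the component $f : X \to X$ need not be the identity. So the first step is to unpack what it means for $e : X \to Y$ to be isomorphic to $c : X \to \omega X$ in $\C$: there are morphisms $(f,g) : e \to c$ and $(f',g') : c \to e$ in $\C$ whose composites are the respective identities. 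In particular $f \circ f' = \mathrm{id}_X$ and $f' \circ f = \mathrm{id}_X$, so $f : X \to X$ is a homeomorphism, and $g : Y \to \omega X$ is a homeomorphism with $g \circ e = c \circ f$.

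The second step is to show that the self-homeomorphism $f$ of $X$ can be absorbed, using the special structure of $\omega X = X \cup \{\infty\}$. Since $X$ is non-compact, locally compact, and Hausdorff, $\omega X$ is compact Hausdorff and $X$ is its (unique) dense open subspace whose complement is the single point $\infty$; moreover any homeomorphism of $X$ extends uniquely to a homeomorphism of $\omega X$ fixing $\infty$ (this is the standard functoriality/uniqueness of the one-point compactification, e.g.\ from \cite[Thm.~3.5.11]{Eng89}). Thus let $\widehat{f} : \omega X \to \omega X$ be the extension of $f^{-1}$, so $\widehat{f}\circ c = c \circ f^{-1}$. Then $h := \widehat{f} \circ g : Y \to \omega X$ is a composition of homeomorphisms, hence a homeomorphism, and
\[
h \circ e = \widehat{f} \circ g \circ e = \widehat{f} \circ c \circ f = c \circ f^{-1} \circ f = c.
\]
Therefore $h$ is a homeomorphism realizing the equivalence of $e$ with $c$, which is exactly the conclusion.

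I expect the main obstacle to be the bookkeeping in the second step: one must be careful that the extension $\widehat f$ genuinely exists and is continuous, i.e.\ that a homeomorphism of a non-compact locally compact Hausdorff space extends (uniquely, fixing the point at infinity) to its one-point compactification, and that the compatibility square $\widehat f \circ c = c \circ f^{-1}$ holds on the nose rather than merely up to the identification of $X$ with its image. Concretely: $\widehat f$ maps $X$ to $X$ via $f^{-1}$ and $\infty$ to $\infty$; continuity at $\infty$ follows because $f^{-1}$, being a homeomorphism of $X$, carries the filter of complements of compact sets to itself, so preimages of basic open neighborhoods of $\infty$ are again such neighborhoods; continuity on $X$ is immediate. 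Everything else — that $f$ is a homeomorphism, that $g$ is a homeomorphism intertwining $e$ and $c\circ f$ — is forced directly by the definitions of morphism and isomorphism in $\C$ recalled in Section~2, and requires no further work.
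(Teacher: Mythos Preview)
Your proof is correct and takes a genuinely different route from the paper's. The paper argues directly: from the isomorphism $(f,g)$ it first shows $|Y\setminus e[X]|=1$ (using that $g$ is a bijection sending the unique preimage of $\infty$ outside $e[X]$), then defines $h:Y\to\omega X$ by hand via $h(e(x))=c(x)$ and $h(w)=\infty$, and checks continuity of $h$ case-by-case on open sets of $\omega X$ according to whether they contain $\infty$. Your argument is more structural: you observe that the self-homeomorphism $f$ of $X$ extends to a self-homeomorphism $\widehat f$ of $\omega X$ fixing $\infty$ (functoriality of the one-point compactification), and then simply compose $h=\widehat f\circ g$. This is shorter and isolates the key point---that the obstruction to passing from ``isomorphic in $\C$'' to ``equivalent'' is precisely a self-homeomorphism of $X$, which the one-point compactification absorbs because $\omega$ is functorial on homeomorphisms. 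The paper's approach, by contrast, is entirely self-contained and does not invoke any extension property, at the cost of a longer point-set verification.
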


\begin{proof}
By hypothesis, there is an isomorphism $(f,g)$ between $e$ and $c$, which means the following diagram is commutative.
\[
\xymatrix{
X \ar[r]^e \ar[d]_{f} & Y \ar[d]^{g} \\
X \ar[r]_{c} & \omega X
}
\]
Write $\omega X = c(X) \cup \{\infty\}$. We claim that $|Y \setminus e[X]| = 1$. Since $g$ is onto, there is $w \in Y$ with $g(w) = \infty$. Then $w \notin e[X]$ since $\infty \notin c[X]$. Let $y \in Y$ with $y \ne w$. Then $g(y) \ne \infty$ since $g$ is 1-1. Therefore, there is $x \in X$ with $g(y) = c(x)$. Since $f$ is onto, there is $x' \in X$ with $x = f(x')$. Therefore, $g(y) = c(f(x')) = g(e(x'))$. Since $g$ is 1-1, $y = e(x')$. This proves $Y\setminus e[X] = \{w\}$, yielding the claim. Define $h : Y \to \omega X$ by $h(e(x)) = c(x)$ if $x \in X$ and $h(w) = \infty$. This is well defined since $e$ is 1-1. Note that $h\circ e = c$ follows immediately from the definition. We prove that $h$ is a homeomorphism.  Let $V$ be open in $\omega X$. First suppose that $\infty \notin V$. Then $V$ is open in $c[X]$, so $V = c[U]$ for some open set $U$ of $X$. As $h$ is 1-1 and $h\circ e = c$, we see that that $h^{-1}(V) = h^{-1}(c[U]) = e[U]$. This is open in $Y$ since $U$ is open in $X$ and $e[X]$ is open in $Y$. Next suppose that $\infty \in V$. Then $V = \{\infty\} \cup c[U]$ with $U$ open in $X$ and $X \setminus U$ compact. By the previous case, we see that $h^{-1}(V) = \{w\} \cup e[U]$. Furthermore,
\[
Y \setminus h^{-1}(V) = e[X] \setminus e[U] = e[X \setminus U].
\]
Since $X \setminus U$ is compact, $e[X \setminus U]$ is compact, and so it is closed in $Y$. Thus, $h^{-1}(V)$ is open in $Y$. This completes the proof that $h$ is continuous. Because it is a bijection between compact Hausdorff spaces, it is a homeomorphism. Thus, $e : X \to Y$ and $c : X \to \omega X$ are equivalent as compactifications of $X$.
\end{proof}

\begin{definition}
We say that a de Vries extension $\alpha : \dv{A} \to \dv{B}$ is \emph{minimal} provided for every compatible de Vries extension $\gamma : \dv{C} \to \dv{B}$, there is a de Vries morphism $\delta: \dv{A} \to \dv{C}$ such that $\gamma \star \delta = \alpha$.
\[
\xymatrix{
\dv{A} \ar[dr]_{\delta} \ar[rr]^{\alpha} && \dv{B} \\
& \dv{C} \ar[ru]_{\gamma} &
}
\]
\end{definition}

\begin{theorem} \label{prop: minimal}
For a de Vries extension $\alpha : \dv{A} \to \dv{B}$, the following are equivalent.
\begin{enumerate}
\item $\alpha$ is minimal but not compact.
\item $X_\dv{B}$ is non-compact locally compact and $\alpha_*:X_\dv{B}\to Y_\dv{A}$ is isomorphic to the one-point compactification of $X_\dv{B}$.
\item $X_\dv{B}$ is non-compact locally compact and $\alpha_*$ is equivalent to the one-point compactification of $X_\dv{B}$.
\item $I_\alpha$ is an end ideal with $\bigvee \alpha[I_\alpha] = 1$.
\end{enumerate}
\end{theorem}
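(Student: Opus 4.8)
The plan is to prove the cyclic chain of implications (1)$\Rightarrow$(2)$\Rightarrow$(3)$\Rightarrow$(4)$\Rightarrow$(1), translating freely between a de Vries extension and its dual compactification via Theorem~\ref{thm: duality}. The one ingredient that is not purely formal is a dual description of minimality, which I would record first: by Lemma~\ref{lem: 5.3&6.2}(2), a de Vries extension $\gamma$ with codomain $\dv{B}$ is compatible with $\alpha$ exactly when $\gamma$ induces the topology $\tau_\alpha$ on $X_\dv{B}$, i.e.\ when $\gamma_*$ is a compactification of the same space $X_\dv{B}$ that $\alpha_*$ compactifies; and, applying the functors $\sf E$ and $\sf C$, the existence of a de Vries morphism $\delta$ with $\gamma\star\delta=\alpha$ corresponds to the existence of a continuous map $\delta_*$ with $\delta_*\circ\gamma_*=\alpha_*$, i.e.\ to $\alpha_*$ factoring through $\gamma_*$ in $\C$. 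Since every compactification of $X_\dv{B}$ arises as $\gamma_*$ for a suitable compatible $\gamma$, this yields the reformulation: \emph{$\alpha$ is minimal if and only if $\alpha_*$ is the smallest compactification of $X_\dv{B}$.}

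For (1)$\Rightarrow$(2): if $\alpha$ is not compact then $X_\dv{B}$ is non-compact by Theorem~\ref{prop: equivalence of compact}, and if $\alpha$ is minimal then $\alpha_*$ is the smallest compactification of $X_\dv{B}$ by the reformulation above. Now a space that possesses a smallest compactification is locally compact, and for a non-compact locally compact space the smallest compactification is the one-point compactification, unique up to equivalence (see, e.g., \cite{Eng89}); hence $X_\dv{B}$ is non-compact locally compact and $\alpha_*$ is equivalent to, and therefore (Remark~\ref{rem: equiv versus iso}) isomorphic in $\C$ to, $\omega X_\dv{B}$. For (2)$\Rightarrow$(3): this is exactly Lemma~\ref{lem: equivalent to alpha}, applied to the compactification $\alpha_*$ of the non-compact locally compact space $X_\dv{B}$.

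For (3)$\Rightarrow$(4): since $X_\dv{B}$ is locally compact, Theorem~\ref{prop: I_alpha} gives that $I_\alpha$ is a round ideal with $\bigvee\alpha[I_\alpha]=1$, and Claim~\ref{claim: X open} identifies $\alpha_*[X_\dv{B}]$ with the open subset $U=\bigcup\{\zeta_\dv{A}(a)\mid a\in I_\alpha\}$ of $Y_\dv{A}$ corresponding to $I_\alpha$ under the lattice isomorphism of Lemma~\ref{prop: closed}(3). As $\alpha_*$ is isomorphic in $\C$ to $\omega X_\dv{B}$, the remainder $Y_\dv{A}\setminus U$ is a single point; since $Y_\dv{A}$ is Hausdorff this makes $U$ a maximal proper open subset, hence $I_\alpha$ a maximal proper round ideal, i.e.\ an end ideal (equivalently, by Lemma~\ref{prop: closed}(1), $\{a\mid\lnot a\in I_\alpha\}$ is an end), which together with $\bigvee\alpha[I_\alpha]=1$ gives (4). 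For (4)$\Rightarrow$(1): an end ideal is a round ideal, so Theorem~\ref{prop: I_alpha} (with $\bigvee\alpha[I_\alpha]=1$) gives that $X_\dv{B}$ is locally compact and Claim~\ref{claim: X open} again identifies $\alpha_*[X_\dv{B}]$ with the open set $U$ corresponding to $I_\alpha$; since $I_\alpha$ is a maximal proper round ideal, $U$ is a maximal proper open subset of $Y_\dv{A}$, so $Y_\dv{A}\setminus U$ is a single point $y_0$. In particular $\alpha_*$ is not onto, so $X_\dv{B}$ is non-compact, and $\alpha_*$ is a compactification of the non-compact locally compact space $X_\dv{B}$ with one-point remainder $\{y_0\}$; the construction in the last part of the proof of Lemma~\ref{lem: equivalent to alpha} then shows that $\alpha_*$ and $\omega X_\dv{B}$ are equivalent. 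Hence $\alpha_*$ is the smallest compactification of $X_\dv{B}$, so $\alpha$ is minimal by the first paragraph, while $\alpha$ is not compact since $X_\dv{B}$ is non-compact.

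The step I expect to require the most care is the dual reformulation of minimality in the first paragraph: one must verify, through the dual equivalence and the canonical Tarski isomorphism $\dv{B}\cong\wp(X_\dv{B})$, that the $\deve$-morphism realizing $\gamma\star\delta=\alpha$ has, up to this isomorphism, the identity of $X_\dv{B}$ as its second component, so that compatibility of $\gamma$ really corresponds to $\gamma_*$ compactifying $X_\dv{B}$ and the factoring of $\alpha$ through $\gamma$ really corresponds to $\alpha_*$ factoring through $\gamma_*$. A secondary point to keep track of is that Claim~\ref{claim: X open} was established under the standing assumption $\bigvee\alpha[I_\alpha]=1$; this assumption is available at both places where the claim is invoked above, so no difficulty arises there.
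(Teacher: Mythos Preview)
Your proof is correct and follows essentially the same approach as the paper's. The only difference is organizational: you extract the equivalence ``$\alpha$ is minimal iff $\alpha_*$ is the smallest compactification of $X_\dv{B}$'' as a preliminary reformulation and then run the cycle (1)$\Rightarrow$(2)$\Rightarrow$(3)$\Rightarrow$(4)$\Rightarrow$(1), whereas the paper establishes this correspondence inline in the implications (1)$\Rightarrow$(3) and (3)$\Rightarrow$(1) and uses the scheme (1)$\Rightarrow$(3), (2)$\Leftrightarrow$(3), (2)$\Rightarrow$(4), (4)$\Rightarrow$(2), (3)$\Rightarrow$(1); the underlying arguments (de Vries duality plus faithfulness of $\sf E$ for the reformulation, Claim~\ref{claim: X open} for the link between $I_\alpha$ and the remainder, and Lemma~\ref{lem: equivalent to alpha} for (2)$\Leftrightarrow$(3)) are the same.
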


\begin{proof}
(1)$\Rightarrow$(3). Since $\alpha$ is not compact, $X_\dv{B}$ is non-compact by Theorem~\ref{prop: equivalence of compact}. Let 
$e : X_\dv{B} \to Z$ be a compactification. Then $e^{-1} : \RO(Z) \to \wp(X_\dv{B})$ is a de Vries extension. As 
$\zeta_\dv{B}: \dv{B} \to \wp(X_\dv{B})$ is a Boolean isomorphism, $\gamma := \zeta_\dv{B}^{-1} \circ e^{-1} : \RO(Z) \to \dv{B}$ 
is a de Vries extension, which is compatible with $\alpha$ by Lemma~\ref{lem: 5.3&6.2}(2). Since $\alpha$ is minimal, there is a 
de Vries morphism $\delta : \dv{A} \to \RO(Z)$ with $\gamma \star \delta = \alpha$. By de Vries duality, $\delta_*$ induces a 
continuous map $Z \to Y_\dv{A}$ making the following diagram commute.
\[
\xymatrix{
X_\dv{B} \ar[rr]^{e} \ar[dr]_{\alpha_*} && Z \ar[dl] \\
& Y_\dv{A} &
}
\]
Therefore, $\alpha_*:X_\dv{B}\to Y_\dv{A}$ is the least compactification of $X_\dv{B}$. Thus, $X_\dv{B}$ is locally compact and $\alpha_*:X_\dv{B}\to Y_\dv{A}$ is equivalent to the one-point compactification of $X_\dv{B}$ by \cite[Thm.~3.5.12]{Eng89}..

(2)$\Leftrightarrow$(3). The implication $\Rightarrow$ follows by Lemma~\ref{lem: equivalent to alpha} and the implication $\Leftarrow$ is clear.

(2)$\Rightarrow$(4). By Theorems~\ref{prop: I_alpha} and~\ref{prop: equivalence of compact}, $I_\alpha$ is a proper round ideal with $\bigvee \alpha[I_\alpha] = 1$. Claim~\ref{claim: X open} shows that the open subset of $Y_\dv{A}$ corresponding to $I_\alpha$ is $\alpha_*[X_\dv{B}]$. Because this set is the complement of a single point, we see that $I_\alpha$ is an end ideal.

(4)$\Rightarrow$(2). Since $I_\alpha$ is an end ideal, Claim~\ref{claim: X open} shows that $Y_\dv{A} \setminus \alpha_*[X_\dv{B}]$ is a single point. Therefore, $X_\dv{B}$ is non-compact locally compact and $\alpha_*:X_\dv{B}\to Y_\dv{A}$ is isomorphic to the one-point compactification of $X_\dv{B}$.

(3)$\Rightarrow$(1). By Theorem~\ref{prop: equivalence of compact}, $\alpha$ is not compact. Let $\gamma : \dv{C} \to \dv{B}$ be a compatible extension. Then the topology on $X_\dv{B}$ induced by $\gamma$ is the same as that induced by $\alpha$, and by \cite[Thm.~3.5.11]{Eng89} there is a continuous map $f : Y_\dv{C} \to Y_\dv{A}$ making the following diagram commute.
\[
\xymatrix{
X_\dv{B} \ar[rr]^{\gamma_*} \ar[dr]_{\alpha_*} && Y_\dv{C} \ar[ld]^f \\
& Y_\dv{A} &
}
\]
By de Vries duality, there is a de Vries morphism $\delta : \dv{A} \to \dv{C}$ with $\delta_* = f$. Since the functor $\sf{E} : \C \to \deve$ is faithful, $\gamma\star\delta = \alpha$, and hence $\alpha$ is a minimal de Vries extension.
\end{proof}

\begin{remark}
Using Theorem~\ref{thm: duality}, we can phrase Theorem~\ref{prop: minimal} as follows: Let $e : X \to Y$ be a compactification and let $\alpha = e^{-1} : \RO(Y) \to \wp(X)$ be the corresponding de Vries extension. Then the following are equivalent.
\begin{enumerate}
\item $X$ is non-compact locally compact and $e:X \to Y$ is isomorphic to the one-point compactification of $X$.
\item $X$ is non-compact locally compact and $e:X \to Y$ is equivalent to the one-point compactification of $X$.
\item $\alpha$ is minimal but not compact.
\item $I_{\alpha}$ is an end ideal with $\bigcup \alpha[I_{\alpha}] = X$.
\end{enumerate}
\end{remark}

\begin{remark}
Let $\alpha : \dv{A} \to \dv{B}$ be a de Vries extension.
\begin{enumerate}
\item If $\alpha : \dv{A} \to \dv{B}$ is compact, then every compatible de Vries extension is isomorphic to $\alpha$. To see this, let 
$\gamma : \dv{C} \to \dv{B}$ be compatible with $\alpha$. By Lemma~\ref{lem: 5.3&6.2}(2), the topology on $X_\dv{B}$ inherited from $\alpha$ 
is the same as that inherited from $\gamma$. Because $\alpha$ is compact, the space $X_\dv{B}$ is compact by 
Theorem~\ref{prop: equivalence of compact}. Consequently, the embeddings $\alpha_* : X_\dv{B} \to Y_\dv{A}$ and 
$\gamma_* : X_\dv{B} \to Y_\dv{C}$ are both homeomorphisms, and so $\alpha_*$ and $\gamma_*$ are isomorphic in $\C$ 
as we see from the following diagram.
\[
\xymatrix@C5pc{
X_\dv{B} \ar[r]^{\alpha_*} \ar@{=}[d] & Y_\dv{A} \ar[d]^{\gamma_*\circ\alpha_*^{-1}} \\
X_\dv{B} \ar[r]_{\gamma_*} & Y_\dv{C}
}
\]
By Theorem~\ref{thm: duality}, it follows that $\alpha$ and $\gamma$ are isomorphic.

\item We show that $\alpha$ is both maximal and minimal iff $X_\dv{B}$ is almost compact, where we recall (see, e.g., \cite[p.~95]{GJ60}) that a completely regular space $X$ is \emph{almost compact} provided $|\beta X \setminus X| \le 1$. First suppose that $\alpha$ is not compact. By Theorem~\ref{prop: equivalence of compact}, $X_\dv{B}$ is non-compact. Therefore, by \cite[Thm.~6.4]{BMO18a} and Theorem~\ref{prop: minimal}, $\alpha$ is both maximal and minimal iff $X_\dv{B}$ is almost compact. Next suppose that $\alpha$ is compact. Then $X_\dv{B}$ is compact. Also, by (1), each compatible de Vries extension is isomorphic to $\alpha$, which implies that $\alpha$ is both maximal and minimal. Consequently, $\alpha$ is both maximal and minimal iff $X_\dv{B}$ is almost compact.
\end{enumerate}
\end{remark}

\section{Dimov duality for $\LKHaus$}

In Theorem~\ref{thm: loc comp} we proved that $\LKHaus$ is dually equivalent to $\ldeve$. In \cite[Thm.~3.12]{Dim10} Dimov proved that $\LKHaus$ is dually equivalent to a category we denote by $\Dim$ below. It follows that there is an equivalence between
$\ldeve$ and $\Dim$.
\[
\xymatrix{
& \LKHaus \ar@{<->}[dl] \ar@{<->}[dr] & \\
\Dim \ar@{<->}[rr] && \ldeve
}
\]
In this section we give a direct proof for why $\ldeve$ and $\Dim$ are equivalent, thus obtaining Dimov duality as a consequence
of Theorem~\ref{thm: loc comp}.

\begin{definition}
A \emph{Dimov algebra} is a triple $\mathfrak{D} = (A, \lhd, I)$, where $A$ is a complete Boolean algebra, $\lhd $ is a binary relation on
$A$ satisfying (DV1)--(DV5) of Definition~\ref{def: DeV}, and $I$ is an ideal of $A$ satisfying
\begin{enumerate}
\item[(I1)]If $a \in I$ and $a \lhd b$, then there is $c \in I$ with $a \lhd c \lhd b$.
\item[(I2)]If $(a \wedge c) \lhd (b\vee \lnot c)$ for all $c \in I$, then $a \lhd b$.
\item[(I3)]If $b \ne 0$, then there is $0 \ne a \in I$ with $a \lhd b$.
\end{enumerate}
\end{definition}

\begin{remark} \label{rem: Dimov alternative}
In \cite{Dim10} Dimov worked with contact relations $\delta$ and the resulting contact algebras. The two relations $\delta$ and $\lhd$ are dual to each other: $a \delta b$ iff $a \not\lhd \lnot b$. The axioms in terms of $\delta$ are given in \cite[Def.~2.1]{Dim10}, and it is straightforward to see that they are equivalent to axioms (DV1)-(DV5) for $\lhd$. Axioms (I1) and (I3) are the same as the axioms Dimov gives in \cite[Def.~2.9]{Dim10}. Our axiom (I2) is slightly different from the corresponding axiom of Dimov, which can be phrased in the language of $\lhd$ as follows:
\begin{equation}
\mbox{If } a \lhd (b \vee \lnot c) \mbox{ for all } c \in I, \mbox{ then } a \lhd b. \tag{$*$}
\end{equation}
Clearly (I2) implies $(*)$. For the converse, suppose that $(*)$ holds and $(a \wedge c) \lhd (b\vee \lnot c)$ for all $c \in I$. Let $d \in I$. For each $c \in I$, we have $a \wedge d \le (a \wedge d) \vee (a \wedge c) = a \wedge (c \vee d)$. Since $c \vee d \in I$, by assumption, $a \wedge (c \vee d) \lhd b \vee \lnot (c \vee d)$. Since $b \vee \lnot (c \vee d) \le b \vee \lnot c$, we have $a \wedge d \lhd b \vee \lnot c$ for all $c \in I$ by (DV3). Thus, $(*)$ implies $(a \wedge d) \lhd b$. Since this is true for all $d \in I$, by (DV5), $\lnot b \lhd (\lnot a\vee\lnot d)$ for all $d \in I$. Applying $(*)$ again yields $\lnot b\lhd\lnot a$, so $a \lhd b$ by (DV5). This shows that we can replace $(*)$ by (I2) above.
\end{remark}

\begin{definition}
A map $\rho : \mathfrak{D} \to \mathfrak{D'}$ between Dimov algebras is a \emph{Dimov morphism} if $\rho$ satisfies axioms
(M1) and (M2) of Definition~\ref{def: DeV} together with
\begin{enumerate}
\item[(D3)]If $a \lhd b$, then $\lnot\rho(\lnot a) \lhd \rho(b)$.
\item[(D4)]If $c \in I'$, then there is $a \in I$ with $c \le \rho(a)$.
\item[(D5)] $\rho(b) = \bigvee \{ \rho(a) \mid a \in I, a \lhd b\}$.
\end{enumerate}
\end{definition}

Similar to the de Vries setting, the composition of two Dimov morphisms $\rho_1$ and $\rho_2$ is given by
\[
(\rho_2 \diamond \rho_1)(b)=\bigvee\{\rho_2(\rho_1(a))\mid  a \in I, a\lhd b\}.
\]
Like $\dev$, with this composition, Dimov algebras and Dimov morphisms form a category (see \cite[Prop.~4.24]{Dim10}), which we denote by $\Dim$.

\begin{remark} \label{rem: Dimov def}
In Dimov's original definition \cite[Def.~3.8]{Dim10}, a weaker version of Axiom (D3) is used:
\[
\mbox{If } a \in I \mbox{ and } a \lhd b, \mbox{ then } \lnot\rho(\lnot a) \lhd \rho(b). \tag{$**$}
\]
But it follows from \cite[Lem~4.19]{Dim10} that the two axioms are equivalent. Dimov's proof isn't point-free; we give an alternative, pointfree proof. Suppose ($**$) holds. To see that (D3) holds, let $a \lhd b$. To show that $\lnot \rho(\lnot a) \lhd \rho (b)$, by (I2) it is sufficient to prove that $\lnot \rho(\lnot a) \wedge c \lhd \rho (b) \vee \lnot c$ for each $c \in I'$. Let $c \in I'$. By (D4), there is $d \in I$ with $c \le \rho (d)$, and so $\lnot\rho(\lnot a) \wedge c \le \lnot\rho(\lnot a) \wedge \rho(d)$.
\begin{claim}\label{claim: technical fact}
If $\rho : A \to A'$ is a meet preserving function between Boolean algebras, then
$\lnot\rho(\lnot a) \wedge \rho(d) \le \lnot\rho(\lnot(a\wedge d))$ for all $a,d \in A$.
\end{claim}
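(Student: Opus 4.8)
The plan is to reduce the stated inequality to a pure statement about meets, since the only hypothesis available is that $\rho$ preserves finite meets (in particular we may not assume $\rho$ preserves joins or complements).

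First I would record that a meet-preserving map between Boolean algebras is automatically order-preserving: if $x\le y$ in $A$, then $x=x\wedge y$, so $\rho(x)=\rho(x\wedge y)=\rho(x)\wedge\rho(y)\le\rho(y)$. Next I would rephrase the goal. The inequality $\lnot\rho(\lnot a)\wedge\rho(d)\le\lnot\rho(\lnot(a\wedge d))$ is equivalent, by the Boolean identity $u\wedge v=0 \iff v\le\lnot u$ applied with $u=\rho(\lnot a)$ and $v=\rho(d)\wedge\rho(\lnot(a\wedge d))$, to the assertion
\[
\rho(d)\wedge\rho\bigl(\lnot(a\wedge d)\bigr)\le\rho(\lnot a).
\]

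Now meet preservation turns the left-hand side into $\rho\bigl(d\wedge\lnot(a\wedge d)\bigr)$, and a one-line Boolean computation gives $d\wedge\lnot(a\wedge d)=d\wedge(\lnot a\vee\lnot d)=(d\wedge\lnot a)\vee(d\wedge\lnot d)=d\wedge\lnot a\le\lnot a$. Applying the monotonicity of $\rho$ established in the first step yields $\rho\bigl(d\wedge\lnot(a\wedge d)\bigr)=\rho(d\wedge\lnot a)\le\rho(\lnot a)$, which is exactly the reformulated goal.

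There is no genuine obstacle here; the proof is short once the right reformulation is in hand. The only point requiring care is to resist treating $\rho$ as a Boolean homomorphism — it is merely meet-preserving — which is precisely why translating the complement-laden inequality into the meet inequality $\rho(d)\wedge\rho(\lnot(a\wedge d))\le\rho(\lnot a)$ is the key move.
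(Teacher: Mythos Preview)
Your proof is correct and is essentially the same as the paper's: both reduce the claim, via the Boolean equivalence $x\wedge y=0\iff x\le\lnot y$, to the meet inequality $\rho(d)\wedge\rho(\lnot(a\wedge d))\le\rho(\lnot a)$, then use meet preservation together with the identity $d\wedge\lnot(a\wedge d)=d\wedge\lnot a\le\lnot a$ and monotonicity of $\rho$. The only cosmetic difference is that the paper runs the chain of implications forward from $d\wedge\lnot a\le\lnot a$, whereas you first reformulate the goal and then verify it.
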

\begin{proofclaim}
We have
\begin{align*}
d\wedge\lnot a\le\lnot a & \Rightarrow \rho(d\wedge\lnot a)\le\rho(\lnot a)  \Rightarrow \rho(d\wedge\lnot(a\wedge d))\le\rho(\lnot a)  \\
&\Rightarrow \rho(d)\wedge\rho(\lnot(a\wedge d))\le\rho(\lnot a)  \Rightarrow \lnot\rho(\lnot a)\wedge\rho(d)\wedge\rho(\lnot(a\wedge d))=0 \\
& \Rightarrow \lnot\rho(\lnot a) \wedge \rho(d) \le \lnot\rho(\lnot(a\wedge d)).
\end{align*}
Thus, the claim holds.
\end{proofclaim}

By Claim~\ref{claim: technical fact}, $\lnot \rho (\lnot a) \wedge \rho (d) \le \lnot \rho (\lnot (a \wedge d))$. But since $d \in I$, we have $a \wedge d \in I$, so ($**$) yields $\lnot \rho (\lnot (a \wedge d)) \lhd \rho (b)$. So
\[
\lnot \rho (\lnot a) \wedge c \le \lnot \rho (\lnot a) \wedge \rho (d) \le \lnot \rho (\lnot (a \wedge d)) \lhd \rho (b) \le \rho (b) \vee \lnot c
\]
for each $c \in I'$. Thus, $\lnot \rho (\lnot a) \lhd \rho (b)$.
\end{remark}

\begin{lemma} \label{lem: lhd facts}
Suppose that $\alpha : \dv{A} \to \dv{B}$ is a locally compact de Vries extension. Define $\lhd$ on $\dv{A}$ by
\[
a \lhd b \mbox{ iff } (a \wedge c) \prec (b \vee \lnot c) \mbox{ for all } c \in I_\alpha.
\]
\begin{enumerate}
\item If $a \prec b$, then $a \lhd b$.
\item If $a \lhd b$, then $\lnot\alpha(\lnot a) \le \alpha(b)$.
\item If $a \in I_\alpha$ and $a \lhd b$, then $a \prec b$.
\end{enumerate}
\end{lemma}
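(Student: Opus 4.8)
The plan is to treat the three items one by one, using only elementary Boolean manipulations together with the two consequences of local compactness recorded in Theorem~\ref{prop: I_alpha}: that $I_\alpha$ is a round ideal and that $\bigvee\alpha[I_\alpha]=1$. (Both are available since the defining condition of a locally compact de Vries extension is exactly item (2) of that theorem.)

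Item (1) should be immediate. If $a\prec b$ and $c\in I_\alpha$, then $a\wedge c\le a\prec b\le b\vee\lnot c$, so $a\wedge c\prec b\vee\lnot c$ by (DV3); since $c$ is arbitrary, $a\lhd b$. No property of $\alpha$ beyond (DV3) is needed.

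For item (3) the idea is to feed a well-chosen element of $I_\alpha$ into the definition of $\lhd$. Since $I_\alpha$ is round, choose $c\in I_\alpha$ with $a\prec c$; then $a\le c$ by (DV2), so $a\wedge c=a$, and applying $a\lhd b$ to this particular $c$ gives $a\prec b\vee\lnot c$. Combining this with $a\prec c$ via (DV4) yields $a\prec(b\vee\lnot c)\wedge c=b\wedge c$, whence $a\prec b$ by (DV3). I expect no real difficulty here; the only thing to notice is that roundness of $I_\alpha$ is at our disposal.

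Item (2) is the one requiring genuine work. The strategy is to establish the pointwise inequality $\lnot\alpha(\lnot a)\wedge\alpha(c)\le\alpha(b)$ for every $c\in I_\alpha$ and then take the join over $c\in I_\alpha$: since $\dv{B}$ is a complete Boolean algebra and $\bigvee\alpha[I_\alpha]=1$, distributing gives $\lnot\alpha(\lnot a)=\bigvee_{c\in I_\alpha}\bigl(\lnot\alpha(\lnot a)\wedge\alpha(c)\bigr)\le\alpha(b)$. For the pointwise inequality, fix $c\in I_\alpha$; from $a\lhd b$ we have $a\wedge c\prec b\vee\lnot c$, so (M3) and (DV2) give $\lnot\alpha(\lnot(a\wedge c))\le\alpha(b\vee\lnot c)$. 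Two observations then close the gap: Claim~\ref{claim: technical fact}, applied with $\rho=\alpha$, gives $\lnot\alpha(\lnot a)\wedge\alpha(c)\le\lnot\alpha(\lnot(a\wedge c))$; and meet-preservation of $\alpha$ gives $\alpha(b\vee\lnot c)\wedge\alpha(c)=\alpha\bigl((b\vee\lnot c)\wedge c\bigr)=\alpha(b\wedge c)\le\alpha(b)$. Meeting the first with $\alpha(c)$ and chaining through the second yields $\lnot\alpha(\lnot a)\wedge\alpha(c)\le\alpha(b)$, as wanted. The one step that takes a bit of insight is recognizing that Claim~\ref{claim: technical fact} is precisely the device needed to bound $\lnot\alpha(\lnot(a\wedge c))$ below by something expressed through $\lnot\alpha(\lnot a)$ and $\alpha(c)$; everything else is bookkeeping.
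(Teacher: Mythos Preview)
Your proof is correct. Items (1) and (2) are essentially the paper's arguments: for (2) the paper proves the equivalent inequality $\alpha(b\vee\lnot c)\le\alpha(b)\vee\lnot\alpha(c)$ and then derives $\lnot\alpha(\lnot a)\wedge\lnot\alpha(b)\wedge\alpha(c)=0$, whereas you meet directly with $\alpha(c)$ to reach $\lnot\alpha(\lnot a)\wedge\alpha(c)\le\alpha(b)$ --- a cosmetic difference, since both rely on Claim~\ref{claim: technical fact} and the identity $\bigvee\alpha[I_\alpha]=1$.

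Your argument for (3), however, is genuinely different from the paper's and more elementary. The paper first invokes (2) to obtain $\lnot\alpha(\lnot a)\le\alpha(b)$, then uses the full defining identity $\alpha(b)=\bigvee\{\alpha(c)\mid c\in I_\alpha,\,c\prec b\}$ together with $\alpha$-compactness of $a$ (via Lemma~\ref{lem: extra properties of morphisms}) to extract a single $c\prec b$ with $a\le c$. You bypass all of this: roundness of $I_\alpha$ hands you $c\in I_\alpha$ with $a\prec c$, and a single instantiation of the definition of $\lhd$ at that $c$ (using $a\wedge c=a$) already gives $a\prec b\vee\lnot c$; combining with $a\prec c$ via (DV4) finishes. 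Your route needs neither item (2), nor the explicit local-compactness formula for $\alpha(b)$, nor the compactness argument --- only that $I_\alpha$ is round. The paper's approach, on the other hand, makes the role of $\alpha$-compactness visible and reuses the machinery set up for (2).
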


\begin{proof}
(1). If $c \in I_\alpha$, then $(a \wedge c) \le a \prec b \le b \vee \lnot c$, so $(a \wedge c) \prec (b \vee \lnot c)$, and hence $a \lhd b$.

(2). For each $c \in I_\alpha$ we have $(a \wedge c) \prec (b \vee \lnot c)$, so $\lnot\alpha(\lnot(a\wedge c)) \le \alpha(b \vee \lnot c)$.  By Claim~\ref{claim: technical fact}, we have $\lnot\alpha(\lnot a) \wedge \alpha(c) \le \lnot\alpha(\lnot(a\wedge c))$. We show that $\alpha(b \vee \lnot c) \le \alpha(b) \vee \lnot \alpha(c)$. To see this, $b\wedge c\le b$, so we obtain:
\begin{align*}
b\wedge c\le b & \Rightarrow \alpha(b\wedge c)\le\alpha(b)  \Rightarrow \alpha((b\vee\lnot c)\wedge c)\le\alpha(b)  \\
&\Rightarrow \alpha(b\vee\lnot c)\wedge\alpha(c)\le\alpha(b)  \Rightarrow \alpha(b\vee\lnot c)\le\alpha(b)\vee\lnot\alpha(c).
\end{align*}
Therefore, $\lnot\alpha(\lnot a) \wedge \alpha(c) \le \alpha(b) \vee \lnot \alpha(c)$. Thus, $\lnot\alpha(\lnot a) \wedge \lnot\alpha(b) \wedge \alpha(c) = 0$ for all $c\in I_\alpha$. Since $\alpha$ is locally compact, $\bigvee \{ \alpha(c) \mid c \in I_\alpha \} = 1$ by Theorem~\ref{prop: I_alpha}. Consequently,
\begin{align*}
0 &= \bigvee \{ \lnot\alpha(\lnot a) \wedge \lnot\alpha(b) \wedge \alpha(c) \mid c \in I_\alpha\} \\
&= \lnot\alpha(\lnot a) \wedge \lnot\alpha(b) \wedge \bigvee \{ \alpha(c) \mid c \in I_\alpha\} \\
&= \lnot\alpha(\lnot a) \wedge \lnot\alpha(b).
\end{align*}
Therefore, $\lnot\alpha(\lnot a) \wedge \lnot\alpha(b) = 0$, and so $\lnot\alpha(\lnot a) \le \alpha(b)$.

(3). By (2), $\lnot\alpha(\lnot a) \le \alpha(b)$. Since $\alpha$ is locally compact, 
$\alpha(b) = \bigvee \{ \alpha(c) \mid c \in I_\alpha, c \prec b\}$. Because $a$ is $\alpha$-compact, 
applying Lemma~\ref{lem: extra properties of morphisms} to the ideal $I_\alpha \cap \thd b$ yields 
$c \in I_\alpha$ with $\lnot\alpha(\lnot a) \le \alpha(c)$ and $c \prec b$. As $\alpha(a) \le \lnot\alpha(\lnot a)$ 
and $\alpha$ is an embedding, we have $a \le c$, so $a \prec b$.
\end{proof}

We define $\sf{D} : \ldeve \to \Dim$ as follows. For a locally compact de Vries extension $\alpha : \dv{A} \to \dv{B}$, let ${\sf D}(\alpha)=(A, \lhd, I_\alpha)$, where $A$ is the underlying complete Boolean algebra of $\dv{A}$ and $\lhd$ is defined by $a \lhd b$ iff $(a \wedge c) \prec (b \vee \lnot c)$ for all $c \in I_\alpha$ as in the statement of Lemma~\ref{lem: lhd facts}; and if $(\rho, \sigma)$ is a morphism in $\ldeve$, let $\sf{D}(\rho, \sigma) = \rho$.

\begin{proposition}\label{lem: ldeve to Dim}
$\sf{D} : \ldeve \to \Dim$ is a covariant functor.
\end{proposition}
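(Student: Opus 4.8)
The plan is to verify three things: that ${\sf D}(\alpha)=(A,\lhd,I_\alpha)$ is a Dimov algebra, that for a morphism $(\rho,\sigma):\alpha\to\alpha'$ in $\ldeve$ the map $\rho$ is a Dimov morphism from ${\sf D}(\alpha)$ to ${\sf D}(\alpha')$, and that $\sf D$ preserves identities and composition. Several items come for free: $I_\alpha$ is an ideal by Lemma~\ref{lem: I_alpha}, and it is a round ideal with $\bigvee\alpha[I_\alpha]=1$ by Theorem~\ref{prop: I_alpha}, while (M1) and (M2) for $\rho$ hold because $\rho$ is a de Vries morphism. The remaining work is organized around Lemma~\ref{lem: lhd facts}.

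For the Dimov algebra axioms, (DV1)--(DV5) for $\lhd$ are obtained from the corresponding axioms for $\prec$ by unwinding the definition $a\lhd b$ iff $(a\wedge c)\prec(b\vee\lnot c)$ for all $c\in I_\alpha$; for (DV2) one also invokes Lemma~\ref{lem: lhd facts}(2), Lemma~\ref{lem: extra properties of morphisms}(1), and that $\alpha$ is an embedding. For (I1), given $a\in I_\alpha$ with $a\lhd b$, Lemma~\ref{lem: lhd facts}(3) yields $a\prec b$; interpolate with (DV6) to get $a\prec a_1\prec b$, pick $a^{+}\in I_\alpha$ with $a\prec a^{+}$ by roundness, and put $c=a^{+}\wedge a_1\in I_\alpha$; then $a\prec c$ by (DV4) and $c\prec b$ by (DV3), so $a\lhd c\lhd b$ by Lemma~\ref{lem: lhd facts}(1). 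For (I2), if $(a\wedge c)\lhd(b\vee\lnot c)$ for all $c\in I_\alpha$, then for any such $c$ we have $a\wedge c\in I_\alpha$, so Lemma~\ref{lem: lhd facts}(3) gives $(a\wedge c)\prec(b\vee\lnot c)$, whence $a\lhd b$. For (I3), if $b\ne 0$ then $\alpha(b)\ne 0$, so local compactness of $\alpha$ produces $a\in I_\alpha$ with $a\prec b$ and $\alpha(a)\ne 0$, hence $0\ne a\in I_\alpha$ with $a\lhd b$ by Lemma~\ref{lem: lhd facts}(1).

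For the Dimov morphism axioms, (D4) comes from $\bigvee\alpha[I_\alpha]=1$: since $\sigma$ is a complete Boolean homomorphism and $\sigma\circ\alpha=\alpha'\star\rho$, applying $\sigma$ gives $1=\bigvee\{(\alpha'\star\rho)(a)\mid a\in I_\alpha\}\le\bigvee\{\alpha'(\rho(a))\mid a\in I_\alpha\}$, using $(\alpha'\star\rho)(a)\le\alpha'(\rho(a))$; then for $c\in I_{\alpha'}$, the $\alpha'$-compactness of $\lnot\alpha'(\lnot c)$ together with the fact that $I_\alpha$ is an ideal yields $a\in I_\alpha$ with $\alpha'(c)\le\lnot\alpha'(\lnot c)\le\alpha'(\rho(a))$, so $c\le\rho(a)$ because $\alpha'$ reflects order. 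For (D5), the inequality $\ge$ is clear; for $\le$ it suffices, by (M4) for $\rho$ together with $\alpha'$ being order preserving and order reflecting, to show $\alpha'(\rho(e))\le\bigvee\{\alpha'(\rho(a))\mid a\in I_\alpha,\ a\lhd b\}$ for each $e\prec b$, and this holds because $\alpha'(\rho(e))\le(\alpha'\star\rho)(b)=\sigma(\alpha(b))$, and expanding $\alpha(b)=\bigvee\{\alpha(a)\mid a\in I_\alpha,\ a\prec b\}$ (local compactness) and using that $\sigma$ preserves joins, $(\alpha'\star\rho)(a)\le\alpha'(\rho(a))$, and $a\prec b\Rightarrow a\lhd b$ delivers the bound. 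Finally, (D3) follows from its weaker form in Remark~\ref{rem: Dimov def} together with (D4) (the argument there uses Claim~\ref{claim: technical fact} and (I2) for ${\sf D}(\alpha')$), and the weaker form itself holds: if $a\in I_\alpha$ and $a\lhd b$, then $a\prec b$ by Lemma~\ref{lem: lhd facts}(3), so $\lnot\rho(\lnot a)\prec\rho(b)$ by (M3), hence $\lnot\rho(\lnot a)\lhd\rho(b)$ by Lemma~\ref{lem: lhd facts}(1) applied to $\alpha'$.

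For functoriality, $1_{\dv A}$ is the identity of ${\sf D}(\alpha)$ in $\Dim$ once one checks it satisfies (D5), i.e.\ $b=\bigvee\{a\in I_\alpha\mid a\lhd b\}$, which follows from local compactness exactly as in (I3). For composition, given $(\rho_1,\sigma_1):\alpha_1\to\alpha_2$ and $(\rho_2,\sigma_2):\alpha_2\to\alpha_3$ in $\ldeve$, their composite in $\deve$ is $(\rho_2\star\rho_1,\sigma_2\circ\sigma_1)$, so by the previous paragraph $\rho_2\star\rho_1$ is a Dimov morphism ${\sf D}(\alpha_1)\to{\sf D}(\alpha_3)$, and it remains to see $\rho_2\star\rho_1=\rho_2\diamond\rho_1$. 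Both are Dimov morphisms, hence each is determined by its restriction to $I_{\alpha_1}$ via (D5); moreover for $a\in I_{\alpha_1}$ one has $\{d\mid d\prec_1 a\}=\{d\in I_{\alpha_1}\mid d\lhd_1 a\}$ (for $\subseteq$ note $d\prec_1 a$ forces $d\le a\in I_{\alpha_1}$ and then $d\lhd_1 a$ by Lemma~\ref{lem: lhd facts}(1); for $\supseteq$ use Lemma~\ref{lem: lhd facts}(3)), so $(\rho_2\star\rho_1)(a)=(\rho_2\diamond\rho_1)(a)$ directly from the two composition formulas. Hence the two morphisms agree on $I_{\alpha_1}$, and therefore everywhere. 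The main obstacle is verifying the morphism axioms (D3)--(D5), which are the ones that genuinely use the extension datum $\sigma$ and the local compactness of $\alpha$; the reconciliation of the compositions $\star$ and $\diamond$ is then a short consequence of the fact that a Dimov morphism is determined by its restriction to the defining ideal.
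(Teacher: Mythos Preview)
Your proof is correct and follows essentially the same plan as the paper's: verify (DV1)--(DV5) and (I1)--(I3) for ${\sf D}(\alpha)$ using Lemma~\ref{lem: lhd facts}, then (D3)--(D5) for $\rho$, then functoriality. The minor differences---you invoke roundness of $I_\alpha$ from Theorem~\ref{prop: I_alpha} for (I1) rather than re-deriving it via $\alpha$-compactness, route (D5) through $\sigma$ and local compactness of $\alpha$ rather than through the identity $1=\bigvee\{\rho(a)\mid a\in I_\alpha\}$ obtained from (D4), and handle composition by reducing to agreement on $I_{\alpha_1}$ rather than by a direct two-sided estimate---are all valid variations on the same theme.
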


\begin{proof}
Let $\alpha : \dv{A} \to \dv{B}$ be a locally compact de Vries extension. We show that $\sf{D}(\alpha) \in \Dim$. For this, we first
show that $\lhd$ satisfies (DV1)-(DV5).

(DV1) is clear.

(DV2): If $a \lhd b$, then $(a \wedge c) \prec (b \vee \lnot c)$ for all $c \in I_\alpha$. This implies $(a \wedge c) \le (b \vee \lnot c)$,
and so $a \wedge \lnot b \wedge c = 0$ for all $c\in I_\alpha$. Since $\alpha$ is locally compact, $\bigvee\alpha[I_\alpha]=1$. Because $\alpha$
is an embedding, the last equality implies $\bigvee I_\alpha = 1$. Therefore, $a \wedge \lnot b = 0$, so $a \le b$.

(DV3): If $a \le b \lhd c \le d$, then for all $e \in I_\alpha$ we have $a \wedge e \le b \wedge e \prec c \vee \lnot e \le d \vee \lnot e$.
Thus, $a \lhd d$.

(DV4): Suppose that $a \lhd b,c$. Then $(a \wedge e)\prec (b \vee \lnot e),(c \vee \lnot e)$ for all $e \in I_\alpha$. Thus,
$(a \wedge e) \prec ((b\wedge c) \vee \lnot e)$ for all $e \in I_\alpha$, so $a \lhd (b\wedge c)$.

(DV5): If $a \lhd b$, then $(a \wedge c) \prec (b \vee \lnot c)$ for all $c \in I_\alpha$. Therefore,
$(\lnot b \wedge c) \prec (\lnot a \vee \lnot c)$ for all $c \in I_\alpha$. Thus, $\lnot b \lhd \lnot a$.

We next show that $I_\alpha$ satisfies (I1)-(I3).

(I1): Let $a \in I_\alpha$ and $a \lhd b$. Then $\lnot \alpha(\lnot a) \le \alpha(b)$ by Lemma~\ref{lem: lhd facts}(2). Since $\alpha$ is locally compact, we have $\alpha(b) = \bigvee \{ \alpha(c) \mid c \in I_\alpha, c \prec b\}$. Because $a$ is $\alpha$-compact, applying Lemma~\ref{lem: extra properties of morphisms} to $I_\alpha \cap \thd b$ yields $c\in I_\alpha$ with $c \prec b$ and $\lnot\alpha(\lnot a) \le \alpha(c)$. Repeating this argument with $b$ replaced by $c$ yields $d \in I_\alpha$ with $d\prec c$ and $\lnot\alpha(\lnot a) \le \alpha(d)$. Since $\alpha(a) \le \lnot\alpha(\lnot a)$, we see that $a \le d \prec c$, so $a \prec c\prec b$. Thus, $a \lhd c \lhd b$ by Lemma~\ref{lem: lhd facts}(1).

(I2): If $(a\wedge c) \lhd (b \vee \lnot c)$ for all $c \in I_\alpha$, then $a \wedge c \in I_\alpha$, so $(a \wedge c) \prec (b \vee \lnot c)$ for all $c \in I_\alpha$ by Lemma~\ref{lem: lhd facts}(3). Thus, $a \lhd b$.

(I3): Suppose $b \ne 0$. Then $\alpha(b) \ne 0$ since $\alpha$ is an embedding. Therefore, as $\alpha(b) = \bigvee \{\alpha(a) \mid a \in I_\alpha, a \prec b\}$, there is $0 \ne a \in I_\alpha$ with $a \prec b$. By Lemma~\ref{lem: lhd facts}(1), $a \lhd b$.

This shows that $\sf{D}(\alpha) \in \Dim$, and hence $\sf{D}$ is well defined on objects. To see that $\sf{D}$ is well defined on morphisms, let $(\rho, \sigma)$ be a morphism between locally compact de Vries extensions $\alpha : \dv{A} \to \dv{B}$ and $\alpha' : \dv{A'} \to \dv{B'}$.
\[
\xymatrix@C5pc{
\dv{A} \ar[r]^{\alpha} \ar[d]_{\rho} & \dv{B} \ar[d]^{\sigma} \\
\dv{A'} \ar[r]_{\alpha'} & \dv{B'}
}
\]
We show that $\rho : \sf{D}(\alpha) \to \sf{D}(\alpha')$ is a morphism in $\Dim$. Since $\rho$ is a de Vries morphism, it satisfies (M1) and (M2). Suppose $a\in I_\alpha$ with $a \lhd b$. Then $a \prec b$ by Lemma~\ref{lem: lhd facts}(3), so $\lnot\rho(\lnot a) \prec \rho(b)$. This implies $\lnot\rho(\lnot a) \lhd \rho(b)$ by Lemma~\ref{lem: lhd facts}(1), so (D3) holds.

To verify (D4) we point out that if $b \in I_{\alpha}$, then
\[
\sigma(\alpha(b)) = (\alpha'\star\rho)(b) = \bigvee \{ \alpha'(\rho(a))\mid a \in I_\alpha, a \prec b\}.
\]
Therefore, $\bigvee \{ \sigma(\alpha(b)) \mid b \in I_\alpha \} = \bigvee \{ \alpha'(\rho(a)) \mid a \in I_\alpha \}$. But
\[
\bigvee \{ \sigma(\alpha(b)) \mid b \in I_\alpha \} = \sigma\left( \bigvee \{ \alpha(b) \mid b \in I_\alpha\} \right) = \sigma(1) = 1.
\]
So, if $c \in I_{\alpha'}$, then $\lnot\alpha'(\lnot c) \le \bigvee \{ \alpha'(\rho(a)) \mid a \in I_\alpha \}$. Since $c$ is $\alpha'$-compact and $I_\alpha$ is an ideal, there is $a \in I_\alpha$ with $\lnot\alpha'(\lnot c) \le \alpha'(\rho(a))$. As $\alpha'$ is an embedding and $\alpha'(c) \le \lnot \alpha'(\lnot c)$, we conclude that $c \le \rho(a)$. Thus, (D4) holds.

For (D5), since $\bigvee I_{\alpha'} = 1$, (D4) implies that $1 = \bigvee \{ \rho(a) \mid a \in I_\alpha\}$. Let $b \in \dv{A}$. Then
\[
\rho(b) = \rho(b) \wedge \bigvee \{ \rho(a) \mid a \in I_\alpha\} = \bigvee \{ \rho(a \wedge b) \mid a \in I_\alpha \} = \bigvee \{\rho(c) \mid c \in I_\alpha, c \le b\}.
\]
Now, if $c \in I_\alpha$ with $c \le b$, then $\rho(c) = \bigvee \{ \rho(a) \mid a \prec c\}$ by (M4). For $a$ with $a \prec c$, we have $a \in I_\alpha$ and $a \lhd c$ by Lemma~\ref{lem: lhd facts}(1). Thus, $\rho(b) = \bigvee \{ \rho(a) \mid a \in I_\alpha, a \lhd b\}$, and so (D5) holds.

It follows that $\sf{D}(\rho, \sigma) = \rho$ is a morphism in $\Dim$. It is clear that $\sf{D}$ preserves identity maps. To see that $\sf{D}$ preserves composition, let $(\rho_1, \sigma_1)$ and $(\rho_2, \sigma_2)$ be composable morphisms in $\ldeve$. Their composition is $(\rho_1\star\rho_2, \sigma_1 \circ \sigma_2)$. We have shown above that $\rho_1\star\rho_2$ is then a morphism of $\Dim$. Therefore, by (D5),
\[
(\rho_1\star\rho_2)(b) = \bigvee \{ (\rho_1\star\rho_2)(a) \mid a \in I_{\alpha}, a \prec b\}.
\]
On the other hand, by the definition of the composition $\rho_1\diamond \rho_2$ in $\Dim$,
\[
(\rho_1\diamond \rho_2)(b) = \bigvee \{ \rho_1(\rho_2(a)) \mid  a \in I_\alpha, a \lhd b\}.
\]
By Lemma~\ref{lem: lhd facts}, for $a \in I_\alpha$, we have $a \prec b$ iff $a \lhd b$. Therefore, since
$(\rho_1 \star \rho_2)(a) \le \rho_1(\rho_2(a))$ we get $(\rho_1\star\rho_2)(b) \le (\rho_1 \diamond \rho_2)(b)$. However, by definition of $\star$,
\[
(\rho_1\star\rho_2)(b) = \bigvee \{ (\rho_1(\rho_2(a)) \mid a \prec b\},
\]
which gives the reverse inequality. Therefore, $\rho_1 \star \rho_2 = \rho_1 \diamond \rho_2$, which shows that $\sf{D}$ preserves composition. Thus, $\sf{D}$ is a covariant  functor.
\end{proof}

Our goal is to see that $\sf D$ is an equivalence. For this we need to produce, for a Dimov algebra $\mathfrak{D}$, a maximal locally compact de Vries extension. Let $\mathfrak{D}=(A,\lhd,I)$ be a Dimov algebra. The construction in the following definition is well known in pointfree topology (see, e.g., \cite[p.~126]{Joh82} or \cite[p.~90]{PP12}).

\begin{definition} \label{def: sc prox}
We define $\prec$ on $\mathfrak{D}$ by $a \prec b$ iff there is a family $\{c_p \mid p \in \mathbb{Q} \cap [0,1] \}$ with $a \le c_0$, $c_1 \le b$, and $c_p \lhd c_q$ for each $p < q$. We call the sequence $\{c_p\}$ an \emph{interpolating sequence witnessing} $a \prec b$.
\end{definition}

Recall (see, e.g., \cite[p.~90]{PP12}) that a binary relation $R$ is said to be \emph{interpolating} if $aRb$ implies there is $c$ with $aRc$ and $cRb$. It is standard to see that $\prec$ is the largest interpolating relation contained in $\lhd$.

\begin{remark} \label{rem: repeated}
Suppose that $\mathfrak{D}$ is a Dimov algebra and $\prec$ is given as in Definition~\ref{def: sc prox}. If $a \lhd b$ and $a \in I$, then repeated use of (I1) shows that $a \prec b$.
\end{remark}

In order to prove Theorem~\ref{lem: SW proximity}, we require the following characterization of compactifications of a completely regular space \cite[Thm.~2.2.4]{deV62}, which is de Vries' pointfree version of Smirnov's theorem. If $X$ is a completely regular space, define $\lhd$ on $\RO(X)$ by $U \lhd V$ if $\cl(U) \subseteq V$. If $\prec$ is a proximity on $\RO(X)$, we say that $\prec$ is \emph{compatible} with the topology if $\prec$ is contained in $\lhd$ and $V = \bigcup \{ U \in \RO(X) \mid \exists W \in \RO(X), U \prec W \subseteq V\}$ for each open set $V$.

\begin{theorem}[de Vries] \label{thm: prox=comp}
Let $X$ be a completely regular space. There is an order isomorphism between the poset of (inequivalent) compactifications of $X$ and the poset of proximities $\prec$ on $\RO(X)$ compatible with the topology.
\end{theorem}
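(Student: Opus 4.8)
The plan is to make explicit the bijection underlying the statement and then to check it is an order isomorphism, reorganizing de Vries' argument so that it runs through the de Vries-extension machinery of Section~2. In one direction, I would send a compactification $e:X\to Y$, regarded via $e$ as a dense embedding, to the proximity $\prec_e$ on $\RO(X)$ obtained by transporting the canonical proximity of $\RO(Y)$ along the Boolean isomorphism of Lemma~\ref{lem: 4.1}; concretely, writing $\widehat U=\int_Y\cl_Y(U)$ for the regular open subset of $Y$ with $\widehat U\cap X=U$, set $U\prec_e V$ iff $\cl_Y(\widehat U)\subseteq\widehat V$. Since (DV1)--(DV7) are preserved by Boolean isomorphisms, $(\RO(X),\prec_e)$ is again a de Vries algebra, and equivalent compactifications visibly give the same relation. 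The first thing to verify is that $\prec_e$ is compatible with the topology: containment $\prec_e\subseteq\lhd$ is immediate from $\cl_X(U)=\cl_Y(U)\cap X\subseteq\cl_Y(\widehat U)\cap X\subseteq\widehat V\cap X=V$, while the density requirement I would prove first for regular open $V$ — using that the compact Hausdorff space $Y$ is normal, so each point of $V$ has a regular open neighbourhood $\widehat U$ in $Y$ with $\cl_Y(\widehat U)\subseteq\widehat V$, i.e. $\widehat U\cap X\prec_e V$ — and then deduce it for arbitrary open $V$ from the fact that a completely regular space is semiregular, so $\RO(X)$ is a base.

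In the other direction, given a compatible proximity $\prec$, the proximity axioms give (DV1)--(DV6) and the density clause of compatibility gives (DV7), so $\dv A:=(\RO(X),\prec)$ is a de Vries algebra. The key point here is that the inclusion $\iota:\RO(X)\hookrightarrow\wp(X)$ is a de Vries extension: using the elementary fact that a finite intersection of regular open sets is regular open, $\iota$ preserves finite meets, so (M1) and (M2) hold; (M3) is exactly the rewriting of $U\prec V\Rightarrow\cl_X(U)\subseteq V$ afforded by compatibility; and (M4) amounts again to $V=\bigcup\{U\mid U\prec V\}$, which follows from the density clause. As $\iota$ is injective, $\wp(X)$ is atomic and extremally disconnected, and $\iota[\RO(X)]$ is join--meet dense (each singleton is the meet of the regular open sets containing it, by complete regularity), $\iota$ is a de Vries extension. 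Applying the functor ${\sf C}:\deve\to\C$ of Theorem~\ref{thm: duality} then yields a compactification $\iota_*:X\to Y_{\dv A}$, after identifying $X$ with the set of atoms of $\wp(X)$, where $\iota_*(x)=\thu\{U\in\RO(X)\mid x\in U\}$; I set $e_\prec:=\iota_*$.

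Next I would check that these two assignments are mutually inverse, which is where de Vries duality does the work. Starting from $\prec$, the de Vries isomorphism $\zeta_{\dv A}:\dv A\to\RO(Y_{\dv A})$ carries $\prec$ to the canonical proximity of $\RO(Y_{\dv A})$, and one computes $e_\prec^{-1}(\zeta_{\dv A}(V))=\{x\mid V\in\iota_*(x)\}=\bigcup\{U\mid U\prec V\}=V$, so $e_\prec^{-1}$ agrees with $\zeta_{\dv A}^{-1}$; hence $\prec_{e_\prec}=\prec$. Starting from $e:X\to Y$, the map $e^{-1}:(\RO(Y),\prec)\to(\RO(X),\prec_e)$ is by construction a de Vries isomorphism, so it dualizes, together with the homeomorphism $\xi_Y:Y\to(Y^*)_*$ of de Vries duality, to a homeomorphism $Y_{(\RO(X),\prec_e)}\to Y$; using that neighbourhood filters of points of the compact Hausdorff space $Y$ are round, a direct computation shows this homeomorphism intertwines $e_{\prec_e}$ with $e$, so $e_{\prec_e}$ is equivalent to $e$.

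Finally, for the order statement, with compactifications ordered as usual (a finer compactification maps onto a coarser one over $X$) and compatible proximities ordered by inclusion, I would prove that $e_1\le e_2$ iff $\prec_{e_1}\subseteq\prec_{e_2}$. One implication follows by transporting a factoring map $g:Y_2\to Y_1$ through the functor ${\sf E}:\C\to\deve$; for the other, if $\prec_1\subseteq\prec_2$ then $\mathrm{id}:(\RO(X),\prec_1)\to(\RO(X),\prec_2)$ is a de Vries morphism, and dualizing it — and checking the resulting map of end spaces commutes with the embeddings $e_{\prec_i}$, which reduces again to $V=\bigcup\{U\mid U\prec_i V\}$ — shows $e_{\prec_1}\le e_{\prec_2}$. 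I expect the main obstacles to be bookkeeping rather than conceptual: pinning down the two compatibility conditions in the first paragraph (where semiregularity of $X$ and normality of $Y$ must be used exactly), and, in the last paragraph, extracting the ``$e_1\le e_2\Rightarrow\prec_{e_1}\subseteq\prec_{e_2}$'' implication cleanly from the dual equivalences; in any case all of this is already contained in \cite[Thm.~2.2.4]{deV62}.
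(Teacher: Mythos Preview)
The paper does not prove this theorem at all: it is stated as de Vries' pointfree version of Smirnov's theorem and simply cited from \cite[Thm.~2.2.4]{deV62}, to be used as input for Theorem~\ref{lem: SW proximity}. So there is no ``paper's own proof'' to compare against.

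That said, your reconstruction is sound and, interestingly, routes the argument through the paper's own de Vries-extension machinery (Lemma~\ref{lem: 4.1}, the functors ${\sf E}$ and ${\sf C}$, and the natural isomorphisms of Theorem~\ref{thm: duality}) rather than through de Vries' original development. This is a genuine reorganization: de Vries in \cite{deV62} works directly with ends of the de Vries algebra $(\RO(X),\prec)$ and builds the compactification by hand, whereas you obtain it for free as $\iota_*$ once you check that the inclusion $\iota:\RO(X)\hookrightarrow\wp(X)$ is a de Vries extension. The trade-off is that your argument presupposes the duality of \cite{BMO18a}, while the paper invokes Theorem~\ref{thm: prox=comp} as an independent classical fact; but as an internal re-derivation your plan works, and the only places requiring care are exactly the ones you flag (compatibility via normality of $Y$ and semiregularity of $X$, and the order-preservation step).
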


\begin{theorem} \label{lem: SW proximity}
Let $\mathfrak{D}=(A,\lhd,I)$ be a Dimov algebra. The relation $\prec$ defined in Definition~\ref{def: sc prox} is a de Vries proximity and $I$ is a round ideal of the de Vries algebra $\dv{A} := (A, \prec)$. Moreover, if $X$ is the open subspace of $Y_\dv{A}$ corresponding to $I$, then $X$ is locally compact and dense in $Y_\dv{A}$, and the inclusion map $e : X \to Y_\dv{A}$ is isomorphic to the Stone-\v{C}ech compactification of $X$. Furthermore, if $\alpha : \dv{A} \to \wp(X)$ is the locally compact de Vries extension corresponding to $e$, then $I = I_\alpha$.
\end{theorem}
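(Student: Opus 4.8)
The plan is to establish the assertions in the order listed: $\prec$ is a de Vries proximity; $I$ is a round ideal of $\dv{A}=(A,\prec)$ (in fact $\bigvee I=1$); $X$ is dense in $Y_\dv{A}$ and locally compact; $e$ is isomorphic in $\C$ to the Stone-\v{C}ech compactification of $X$; and $I=I_\alpha$. For the first part I would verify (DV1)--(DV7) for $\prec$ directly. Axioms (DV1)--(DV5) are routine manipulations of interpolating sequences: (DV1) uses the constant family $c_p=1$; (DV2) uses that $c_0\lhd c_1$ forces $c_0\le c_1$; (DV3) reuses a witnessing family; (DV4) uses $\{b_p\wedge c_p\}$; and (DV5) uses $\{\lnot c_{1-p}\}$. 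Axiom (DV6) is immediate because $\prec$ is by construction the largest interpolating relation contained in $\lhd$. The one axiom with content is (DV7): put $b'=\bigvee\{a\mid a\prec b\}$, so $b'\le b$ by (DV2); if $b'\ne b$ then $b\wedge\lnot b'\ne 0$, and (I3) gives $0\ne d\in I$ with $d\lhd b\wedge\lnot b'$, hence $d\prec b\wedge\lnot b'$ by Remark~\ref{rem: repeated}, whence $d\le b'$ (as $d\prec b$) and $d\le\lnot b'$, forcing $d=0$, a contradiction.

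Next, $I$ is an ideal by hypothesis, and it is round: $\thd I\subseteq I$ since $a\prec b$ forces $a\le b$, while for $a\in I$, applying (I1) to $a\lhd 1$ produces $c\in I$ with $a\lhd c$, hence $a\prec c$ by Remark~\ref{rem: repeated}, so $I\subseteq\thd I$. The same reasoning with (I3) gives $\bigvee I=1$ (if $\bigvee I=d\ne 1$, choose $0\ne a\in I$ with $a\lhd\lnot d$; then $a\le\lnot d$ and $a\le d$, a contradiction). Let $X=\bigcup\{\zeta_\dv{A}(a)\mid a\in I\}$ be the open subspace of $Y_\dv{A}$ corresponding to $I$ by Lemma~\ref{prop: closed}(3); it is dense, since $\bigvee I=1$ forces every nonempty basic open $\zeta_\dv{A}(b)$ to meet some $\zeta_\dv{A}(a)$ with $a\in I$ (else $b\le\bigwedge\{\lnot a\mid a\in I\}=0$). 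Thus $e\colon X\to Y_\dv{A}$ is a compactification. For local compactness, given $a\in I$ roundness gives $c\in I$ with $a\prec c$, so $\cl_{Y_\dv{A}}(\zeta_\dv{A}(a))\subseteq\zeta_\dv{A}(c)\subseteq X$; hence $\cl_{Y_\dv{A}}(\zeta_\dv{A}(a))$ is a compact subset of $X$ and a neighborhood of each point of the $X$-open set $\zeta_\dv{A}(a)$, and these sets cover $X$.

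Put $\alpha=e^{-1}\colon\dv{A}\to\wp(X)$, so that $\alpha(a)=\zeta_\dv{A}(a)\cap X$; by Remark~\ref{rem: locally compact}, $\alpha$ is locally compact. To prove $I=I_\alpha$, observe that by the computation opening Section~4 we have $\lnot\alpha(\lnot a)=\cl_{Y_\dv{A}}(\zeta_\dv{A}(a))\cap X$, and since $\alpha[\dv{A}]=\RO(X)$ (Lemma~\ref{lem: 4.1}) is a basis of $X$, the element $a$ is $\alpha$-compact exactly when $\cl_{Y_\dv{A}}(\zeta_\dv{A}(a))\cap X$ is compact. If $a\in I$, the previous paragraph shows $\cl_{Y_\dv{A}}(\zeta_\dv{A}(a))\subseteq X$ is compact, so $a\in I_\alpha$. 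If $a\in I_\alpha$, then $\cl_{Y_\dv{A}}(\zeta_\dv{A}(a))\cap X$ is compact, hence closed in $Y_\dv{A}$; as $X$ is dense, $\cl_{Y_\dv{A}}(\zeta_\dv{A}(a))=\cl_{Y_\dv{A}}(\zeta_\dv{A}(a)\cap X)\subseteq\cl_{Y_\dv{A}}(\zeta_\dv{A}(a))\cap X\subseteq X$, so covering this compact set by $\{\zeta_\dv{A}(b)\mid b\in I\}$ and passing to a finite subcover yields $a\le b_1\vee\cdots\vee b_n\in I$, whence $a\in I$.

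Finally, for the Stone-\v{C}ech claim I would invoke Theorem~\ref{thm: prox=comp}. Along the Boolean isomorphism $\RO(X)\cong\dv{A}$ of Lemma~\ref{lem: 4.1} (sending $U\in\RO(X)$ to the $a$ with $\zeta_\dv{A}(a)\cap X=U$), the compactification $e$ corresponds to the proximity that $\prec$ transports forward to, while $\beta X$ corresponds to the finest compatible proximity $\prec_\beta$; the crux is that this same isomorphism identifies the relation $U\lhd_X V:\Leftrightarrow\cl_X(U)\subseteq V$ with $\lhd$. Granting that: $\prec$ is the largest interpolating relation inside $\lhd$, and $\prec_\beta$ (a de Vries proximity contained in $\lhd_X$) transports back to an interpolating relation contained in $\lhd$, hence is contained in $\prec$; so, back on $\RO(X)$, $\prec_\beta$ is contained in the proximity of $e$. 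Since the proximity of $e$ is itself compatible, it is contained in $\prec_\beta$; hence the two coincide, so $e$ is equivalent to $\beta X$ and therefore, by Remark~\ref{rem: equiv versus iso}, isomorphic to it in $\C$. The identification $\lhd_X=\lhd$ is the heart of the argument; it is carried out in $Y_\dv{A}$ using de Vries duality (that $a\prec b$ iff $\cl_{Y_\dv{A}}(\zeta_\dv{A}(a))\subseteq\zeta_\dv{A}(b)$), density of $X$, roundness of $I$ (so $\cl_{Y_\dv{A}}(\zeta_\dv{A}(c))\subseteq X$ for $c\in I$), Remark~\ref{rem: repeated}, axiom (I2), and $\prec\subseteq\lhd$. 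For $\lhd\subseteq\lhd_X$, given $a\lhd b$ one has $(a\wedge c)\prec(b\vee\lnot c)$ for every $c\in I$ (by (DV3) and Remark~\ref{rem: repeated}, since $a\wedge c\in I$), so for $x\in\cl_{Y_\dv{A}}(\zeta_\dv{A}(a))\cap X$ lying in some $\zeta_\dv{A}(c)$ with $c\in I$ one gets $x\in\cl_{Y_\dv{A}}(\zeta_\dv{A}(a\wedge c))\subseteq\zeta_\dv{A}(b\vee\lnot c)$, hence $x\in\zeta_\dv{A}(c)\cap\zeta_\dv{A}(b\vee\lnot c)=\zeta_\dv{A}(c\wedge b)\subseteq\zeta_\dv{A}(b)$. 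For $\lhd_X\subseteq\lhd$, fix $c\in I$: then $\cl_{Y_\dv{A}}(\zeta_\dv{A}(a\wedge c))$ and $\cl_{Y_\dv{A}}(\zeta_\dv{A}(\lnot b\wedge c))$ both lie inside $X$, so the hypothesis forces the first into $\zeta_\dv{A}(b)$ and the second into $\cl_{Y_\dv{A}}(\zeta_\dv{A}(\lnot b))=Y_\dv{A}\setminus\zeta_\dv{A}(b)$, making them disjoint; since $Y_\dv{A}\setminus\cl_{Y_\dv{A}}(\zeta_\dv{A}(\lnot b\wedge c))=\zeta_\dv{A}(b\vee\lnot c)$, this means $(a\wedge c)\prec(b\vee\lnot c)$, and as $c\in I$ was arbitrary, (I2) gives $a\lhd b$. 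The main obstacle is precisely this identification of $\lhd$ with $\lhd_X$: keeping the closures taken in $Y_\dv{A}$ separate from those in the subspace $X$, and bringing the Dimov axioms (I1)--(I3) to bear at the right points.
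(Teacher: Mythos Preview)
Your proposal is correct and follows essentially the same approach as the paper's proof: verify (DV1)--(DV7) directly for $\prec$, show $I$ is round with $\bigvee I=1$, obtain density and local compactness of $X$, prove $I=I_\alpha$ via the compactness characterization of $\alpha$-compact elements, and finally identify $\lhd$ with the relation $\cl_X(U)\subseteq V$ on $\RO(X)$ so that $\prec$, being the largest interpolating relation inside $\lhd$, corresponds to the Stone--\v{C}ech proximity via Theorem~\ref{thm: prox=comp}. The only organizational difference worth noting is in the identification $\lhd=\lhd_X$: the paper packages this as the set-theoretic identity $\bigcup\{\cl_Y(U\cap W)\mid W\in I\}=\cl_X(U\cap X)$ together with the one-sided reduction $a\lhd b\Leftrightarrow(a\wedge c)\lhd b$ for all $c\in I$ (dual to Remark~\ref{rem: Dimov alternative}), whereas you verify both inclusions pointwise using the two-sided form $(a\wedge c)\prec(b\vee\lnot c)$ and (I2) directly; both routes rest on the same facts (roundness of $I$, density of $X$, Remark~\ref{rem: repeated}).
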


\begin{proof}
We first show that $\prec$ is a de Vries proximity.

(DV1). The constant sequence $\{1\}$ is an interpolating sequence, so $1 \prec 1$.

(DV2). If $a \prec b$ and $\{c_p\}$ is an interpolating sequence, then $c_0 \lhd c_1$, so $c_0 \le c_1$. Thus, $a \le b$.

(DV3). If $a \le b \prec c \le d$ and $\{e_p\}$ is an interpolating sequence witnessing $b \prec c$, then it is clear that $\{e_p\}$ is also an interpolating sequence witnessing $a \prec d$.

(DV4). Let $a \prec b,c$ and let $\{d_p\},\{e_p\}$ be interpolating sequences witnessing $a\prec b$ and $a\prec c$, respectively. Set $f_p = d_p \wedge e_q$. Then $\{f_p\}$ is an interpolating sequence witnessing $a \prec (b \wedge c)$.

(DV5). Let $a \prec b$ and $\{c_p\}$ be an interpolating sequence. Set $d_p = \lnot c_{1-p}$. Then $a \le c_0$ and $c_1 \le b$ yield $\lnot b \le d_0$ and $d_1 \le \lnot a$. Moreover, if $p < q$, then $1-q < 1-p$, so $c_{1-q} \lhd c_{1-p}$. Therefore, $d_p = \lnot c_{1-p} \lhd \lnot c_{1-q} = d_q$. Thus, $\{d_q\}$ is an interpolating sequence witnessing $\lnot b \prec \lnot a$.

(DV6). Let $a\prec b$ and let $\{c_p\}$ be an interpolating sequence. Set $c = c_{1/2}$. If $d_p = c_{p/2}$ and $e_p = c_{(1+p)/2}$, then it is well known and straightforward to see that $\{d_p \}$ is an interpolating sequence witnessing $a \prec c$ and $\{e_p\}$ is an interpolating sequence witnessing $c \prec b$.

(DV7). If $b \ne 0$, then there is $0\ne a \in I$ with $a \lhd b$. Thus, $a \prec b$ by a repeated use of (I1).

This proves that $\dv{A}$ is a de Vries algebra. We next show $I$ is a round ideal of $\dv{A}$. Let $a \in I$. Since $a \lhd 1$, by (I1) there is $b \in I$ with $a \lhd b \lhd 1$. Therefore, $a \prec b$. This shows that $I$ is round. In addition, $\bigvee I=1$ since otherwise $\lnot\bigvee I\ne 0$, so by (I3), there is $0\ne a \in I$ with $a \lhd \lnot \bigvee I$. Thus, $a \le \bigvee I,\lnot \bigvee I$, yielding $a = 0$. The obtained contradiction proves that $\bigvee I=1$.

Since $I$ is a round ideal, by Lemma~\ref{prop: closed}, $I$ corresponds to the open subset
$X := \bigcup \{ \zeta(a) \mid a \in I\}$ of $Y_\dv{A}$. As $\bigvee I = 1$, from 
$\zeta(\bigvee I)=\int_{Y_{\dv{A}}}(\cl_{Y_{\dv{A}}}(X))$ it follows that $X$ is dense in $Y_\dv{A}$. 
Since $X$ is an open subset of $Y_\dv{A}$, it is locally compact; and since $X$ is dense, the inclusion map 
$e : X \to Y_\dv{A}$ is a compactification of $X$. Consider the locally compact de Vries extension 
$\alpha : \dv{A} \to \wp(X)$ corresponding to $e:X \to Y_\dv{A}$ and given by $\alpha(a) = \zeta(a) \cap X$. 
We show that $I = I_\alpha$. Let $a\in I$. Since $X=\bigcup \{ \zeta(a) \mid a \in I\}$, we have $\zeta(a)\subseteq X$. 
Because $I$ is round, this implies that $\cl_{Y_{\dv{A}}}(\zeta(a))\subseteq X$. But 
$\cl_{Y_{\dv{A}}}(\zeta(a))=Y_\dv{A} \setminus \zeta(\lnot a)$. So $Y_\dv{A} \setminus \zeta(\lnot a)\subseteq X$, 
Thus, $\lnot\alpha(\lnot a) = X \setminus \alpha(\lnot a) = X \setminus \zeta(\lnot a)$. Because 
$Y_\dv{A} \setminus \zeta(\lnot a) \subseteq X$, we see that 
$\lnot\alpha(\lnot a) = Y_\dv{A} \setminus \zeta(\lnot a) = \cl_{Y_{\dv{A}}}(\zeta(a))$. 
Since $\cl_{Y_{\dv{A}}}(\zeta(a))$ is a compact subset of $X$, we conclude that $a \in I_\alpha$.

Conversely, let $a \in I_\alpha$. Then $X \setminus \alpha(\lnot a)$ is a compact subset of $X$. Since
$X=\bigcup \{ \zeta(b) \mid b \in I\}$, we have $\alpha(a)\subseteq\lnot\alpha(\lnot a)=X \setminus \alpha(\lnot a)\subseteq\bigcup \{ \zeta(b) \mid b \in I\}$. As $X \setminus \alpha(\lnot a)$ is compact and $I$ is an ideal, there is $b\in I$ with $\alpha(a)\subseteq\zeta(b)$. Because $X$ is dense in $Y_\dv{A}$, we have $\cl_{Y_{\dv{A}}}(\zeta(a))=\cl_{Y_{\dv{A}}}(\zeta(a)\cap X)=\cl_{Y_{\dv{A}}}(\alpha(a))\subseteq\cl_{Y_{\dv{A}}}(\zeta(b))$. Since $I$ is a round ideal, there is $c\in I$ with $\cl_{Y_{\dv{A}}}(\zeta(b))\subseteq\zeta(c)$. Therefore, $\zeta(a)\subseteq\zeta(c)$, so $a \le c$, and hence $a\in I$. Thus, $I=I_\alpha$.

We finish the proof by showing that $e$ is the Stone-\v{C}ech compactification of $X$. By a dual argument to that in Remark~\ref{rem: Dimov alternative}, we have $a \lhd b$ iff $a \wedge c \lhd b$ for all $c \in I$. If $a \in I$, then $a \lhd b$ implies $a \prec b$ by Remark~\ref{rem: repeated}. In the following, we will identify $\dv{A}$ with $\RO(Y_\dv{A})$ and write $Y = Y_\dv{A}$. We have
\begin{align*}
U \lhd V &\longleftrightarrow U \cap W \lhd V \quad\textrm{for all } W \in I \\
&\longleftrightarrow U \cap W \prec V \quad\textrm{for all } W \in I \\
&\longleftrightarrow \cl_Y(U \cap W) \subseteq V \quad\textrm{for all } W \in I \\
&\longleftrightarrow \bigcup \{ \cl_Y(U \cap W) \mid W \in I\} \subseteq V.
\end{align*}
We claim that $\bigcup \{ \cl_Y(U \cap W) \mid W \in I\} = \cl_X(U\cap X)$. To see this we first recall that since $I$ is round and $\bigcup I = X$, if $W \in I$, there is $W' \in I$ with $W \prec W'$, and so $\cl_Y(W) \subseteq W' \subseteq X$. Therefore, $\cl_Y(W)$ is a closed subset of $X$, and so $\cl_Y(W) = \cl_X(W)$. From this observation the inclusion $\subseteq$ is clear. For the reverse inclusion, let $x \in \cl_X(U \cap X)$. By definition of $X$, there is $W \in I$ with $x \in W$. If $V$ is an open neighborhood of $x$, then as $x \in \cl_X(U \cap X)$ and $V \cap W$ is an open neighborhood of $x$, we have $U \cap X \cap (V \cap W) \ne \varnothing$, so $(U \cap W) \cap V \ne \varnothing$. Thus, $x \in \cl_Y(U\cap W)$. This verifies the claim. We have therefore shown that $U \lhd V$ iff $\cl_X(U \cap X) \subseteq V \cap X$.

By Lemma~\ref{lem: 4.1}, the map $U \mapsto U \cap X$ is a Boolean isomorphism from $\RO(Y)$ to $\RO(X)$. This allows us to move $\lhd$ to the relation on $\RO(X)$, which we also denote by $\lhd$, given by $U \lhd V$ iff $\cl_X(U) \subseteq V$. Similarly, we can move $\prec$ to a proximity on $\RO(X)$. By a standard Urysohn argument, $\prec$ is the largest proximity on $\RO(X)$ contained in $\lhd$. By Theorem~\ref{thm: prox=comp}, the proximity $\prec$ corresponds to the largest compactification of $X$, and so $e : X \to Y_\dv{A}$ is the Stone-\v{C}ech compactification of $X$. This completes the proof.
\end{proof}

\begin{theorem} \label{thm: Dimov}
The categories $\ldeve$ and $\Dim$ are equivalent.
\end{theorem}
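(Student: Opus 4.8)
The plan is to show that the functor $\sf{D}:\ldeve\to\Dim$ of Proposition~\ref{lem: ldeve to Dim} is full, faithful, and essentially surjective, and then to invoke \cite[Thm.~IV.4.1]{Mac71}, exactly as in the proof of Theorem~\ref{thm: cdeve = dev}. Faithfulness is immediate: for a morphism $(\rho,\sigma):\alpha\to\alpha'$ in $\ldeve$ the identity $\sigma\circ\alpha=\alpha'\star\rho$, together with the join-meet density of $\alpha[\dv{A}]$ in $\dv{B}$ and the fact that $\sigma$ is a complete Boolean homomorphism, forces $\sigma$ to be determined by $\rho$; since $\sf{D}(\rho,\sigma)=\rho$, the functor is injective on hom-sets.

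For essential surjectivity, let $\mathfrak{D}=(A,\lhd,I)$ be a Dimov algebra and apply Theorem~\ref{lem: SW proximity} to obtain the de Vries algebra $\dv{A}=(A,\prec)$, the open dense subspace $X$ of $Y_\dv{A}$ corresponding to $I$, and the locally compact de Vries extension $\alpha_\mathfrak{D}:\dv{A}\to\wp(X)$ attached to the inclusion $e:X\to Y_\dv{A}$; since $e$ is the Stone-\v{C}ech compactification of $X$, Theorem~\ref{thm: Stone-Cech} gives $\alpha_\mathfrak{D}\in\ldeve$. It then suffices to check $\sf{D}(\alpha_\mathfrak{D})=\mathfrak{D}$. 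By Theorem~\ref{lem: SW proximity} we have $I_{\alpha_\mathfrak{D}}=I$, so $\sf{D}(\alpha_\mathfrak{D})=(A,\lhd',I)$, where $a\lhd' b$ iff $(a\wedge c)\prec(b\vee\lnot c)$ for all $c\in I$. If $a\lhd' b$, then, as $\prec$ is contained in $\lhd$, we get $(a\wedge c)\lhd(b\vee\lnot c)$ for all $c\in I$, so $a\lhd b$ by (I2). Conversely, if $a\lhd b$ and $c\in I$, then $a\wedge c\in I$ and $a\wedge c\lhd b$ by (DV3), so $a\wedge c\prec b$ by Remark~\ref{rem: repeated}, hence $a\wedge c\prec b\vee\lnot c$; thus $a\lhd' b$. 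Therefore $\lhd'=\lhd$ and $\sf{D}$ is even surjective on objects.

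For fullness, I would first observe that every object of $\ldeve$ is isomorphic to one of the form $\alpha_\mathfrak{D}$: if $\alpha:\dv{A}\to\dv{B}$ lies in $\ldeve$, then $\alpha$ is maximal, so by Theorems~\ref{thm: duality} and~\ref{thm: Stone-Cech} the compactification $\alpha_*:X_\dv{B}\to Y_\dv{A}$ is Stone-\v{C}ech; by Claim~\ref{claim: X open} its image is the open subspace corresponding to $I_\alpha$, which we may identify with $X_\dv{B}$, and under this identification the proximity of $\dv{A}$ coincides with the one produced by Theorem~\ref{lem: SW proximity} from $\sf{D}(\alpha)$, both being the Stone-\v{C}ech proximity on $\RO(X_\dv{B})$; hence $\alpha\cong\alpha_{\sf{D}(\alpha)}$. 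So it remains, given a Dimov morphism $\rho:\mathfrak{D}\to\mathfrak{D}'$, to produce a morphism $(\rho,\sigma):\alpha_\mathfrak{D}\to\alpha_{\mathfrak{D}'}$ in $\deve$; this will automatically be a morphism in $\ldeve$, since $\ldeve$ is a full subcategory. First I would verify that $\rho$ is a de Vries morphism from $(A,\prec)$ to $(A',\prec')$: axioms (M1) and (M2) are the Dimov axioms (M1) and (M2); (M4) follows from (D5) together with the fact that $a\lhd b\iff a\prec b$ for $a\in I$ (Lemma~\ref{lem: lhd facts}) and the monotonicity of $\rho$; and for (M3), given $a\prec b$ I would choose, using that $\prec$ is interpolating, a family $\{c_q:q\in\mathbb{Q}\cap[0,1]\}$ with $a\le c_0$, $c_1\prec b$ and $c_p\prec c_q$ for $p<q$, set $u_q=\lnot\rho(\lnot c_q)$, and use one further interpolation, (D3), and Lemma~\ref{lem: extra properties of morphisms}(1) (giving $\rho(x)\le\lnot\rho(\lnot x)$) to see that $\{u_q\}$ is an interpolating family for $\lhd'$ witnessing $\lnot\rho(\lnot a)\prec'\rho(b)$. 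Next, by de Vries duality $\rho$ has a continuous dual $\rho_*:Y_{\dv{A}'}\to Y_\dv{A}$, and I would show $\rho_*(X')\subseteq X$, where $X=\{y\in Y_\dv{A}:y\cap I\ne\varnothing\}$ and $X'$ is defined analogously: given $y'\in X'$, choose $d\in y'\cap I'$, use roundness of $y'$ to get $e\in y'$ with $e\prec' d$ (so $e\in I'$), apply (D4) to get $a\in I$ with $e\le\rho(a)$ (so $\rho(a)\in y'$), and use roundness of $I$ to get $a_1\in I$ with $a\prec a_1$; then $a_1\in\thu\rho^{-1}(y')=\rho_*(y')$, so $\rho_*(y')\in X$. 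Hence $\rho_*$ restricts to a continuous map $f:X'\to X$, the pair $(f,\rho_*)$ is a morphism $e'\to e$ in $\C$, and applying $\sf{E}:\C\to\deve$ (and using $(\rho_*)^*=\rho$) yields a morphism $(\rho,f^{-1}):\alpha_\mathfrak{D}\to\alpha_{\mathfrak{D}'}$ in $\deve$ with $\sf{D}(\rho,f^{-1})=\rho$, proving fullness.

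The routine parts are faithfulness and axioms (M1)-(M2). The points requiring care are: identifying the proximity of an $\ldeve$-object with the one manufactured from its image under $\sf{D}$ by Theorem~\ref{lem: SW proximity} --- this relies on the objects of $\ldeve$ being Stone-\v{C}ech compactifications; and the Urysohn-style argument for (M3), where the delicate point is that (D3) only relates $\lhd$ to $\lhd'$, so one must assemble a $\mathbb{Q}\cap[0,1]$-indexed $\lhd'$-interpolating family rather than a finite $\lhd'$-chain. The restriction $\rho_*(X')\subseteq X$ needed for $\sf{D}$ to be full is, by contrast, short once unwound, using only (D4) and the roundness of $I$. I expect the verification of (M3) to be the main obstacle.
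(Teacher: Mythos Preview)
Your proposal is correct and follows the same overall strategy as the paper: show that $\sf{D}$ is essentially surjective, faithful, and full, then invoke \cite[Thm.~IV.4.1]{Mac71}. The essential surjectivity and faithfulness arguments are identical to the paper's. For fullness, your argument and the paper's differ only in minor details: the paper builds the interpolating sequence for (M3) as $d_0=\lnot\rho(\lnot c_0)$ and $d_p=\rho(c_p)$ for $p>0$ (using the $\lhd$-interpolating sequence directly from Definition~\ref{def: sc prox}), whereas you take $u_q=\lnot\rho(\lnot c_q)$ throughout after first refining to a $\prec$-interpolating sequence; both work. You are also more explicit than the paper in reducing fullness to objects of the form $\alpha_{\mathfrak{D}}$, and you package the construction of $\sigma$ via the functor $\sf{E}$ rather than verifying $\sigma\circ\alpha=\alpha'\star\rho$ by hand as the paper does; these are cosmetic differences.
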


\begin{proof}
By Proposition~\ref{lem: ldeve to Dim}, we have a covariant functor $\sf{D}:\ldeve\to\Dim$. By \cite[Thm.~IV.4.1]{Mac71}, it is sufficient to show that each object of $\Dim$ is isomorphic to the $\sf{D}$-image of an object of $\ldeve$, and that $\sf{D}$ is full and faithful.

Let $\mathfrak{D} = (A, \lhd, I) \in \Dim$. By Theorem~\ref{lem: SW proximity}, we have a locally compact extension $\alpha : \dv{A} \to \wp(X)$ with $I = I_\alpha$. The functor $\sf{D}$ sends this extension to $(A, \lhd', I)$, where $\lhd'$ is defined in the statement of Lemma~\ref{lem: lhd facts}. We show that ${\lhd'} = {\lhd}$. If $a \lhd b$, then $a\wedge c \le a \lhd b \le b \vee \lnot c$, so $(a \wedge c) \lhd (b \vee \lnot c)$ for each $c \in I$. Since $a \wedge c \in I$, we have $(a \wedge c) \prec (b \vee \lnot c)$ by Remark~\ref{rem: repeated}. Thus, $a \lhd' b$. Conversely, suppose that $a \lhd' b$. Then $(a \wedge c) \prec (b \vee \lnot c)$ for all $c \in I$. Therefore, $(a \wedge c) \lhd (b \vee \lnot c)$ by the definition of $\prec$. Thus, $a \lhd b$ by (I2). This proves that $\mathfrak{D} = \sf{D}(\alpha)$.

To see that $\sf{D}$ is faithful, since $\sf{D}(\rho, \sigma) = \rho$ for each morphism $(\rho, \sigma)$ of $\ldeve$, it suffices to show that if $(\rho, \sigma)$ and $(\rho, \psi)$ are morphisms between $\alpha : \dv{A} \to \dv{B}$ and $\alpha' : \dv{A'} \to \dv{B'}$, then $\sigma = \psi$.
\[
\xymatrix@C5pc{
\dv{A} \ar[d]_{\rho} \ar[r]^{\alpha} & \dv{B} \ar@/^/[d]^{\sigma} \ar@/_/[d]_{\psi} \\
\dv{A'} \ar[r]_{\alpha'} & \dv{B'}
}
\]
We have $\sigma \circ \alpha = \psi \circ \alpha$ since both are equal to $\alpha' \star \rho$. Both $\sigma$ and $\psi$ are complete Boolean homomorphisms. Since $\alpha[\dv{A}]$ is join-meet dense in $\dv{B}$, it follows that $\sigma = \psi$. This shows that $\sf{D}$ is faithful.

Finally, to see that $\sf{D}$ is full, for each morphism $\rho : \mathfrak{D} \to \mathfrak{D'}$ in $\Dim$, we need to produce a morphism $(\rho, \sigma)$ in $\ldeve$. Using the construction of Theorem~\ref{lem: SW proximity}, we have locally compact de Vries extensions $\alpha : \dv{A} \to \wp(X)$ and $\alpha' : \dv{A}' \to \wp(X')$ with $I = I_\alpha$ and $I' = I_{\alpha'}$.

We first show that $\rho : \dv{A} \to \dv{A'}$ is a de Vries morphism. Clearly (M1) and (M2) hold. To prove (M3), suppose that $a \prec b$. Then there is an interpolating sequence $\{c_p\}$ witnessing $a \prec b$. For each $p<q$, since $c_p \lhd c_q$, we have $\lnot\rho(\lnot c_p) \lhd \rho(c_q)$ by (D3). Set $d_0 = \lnot\rho(\lnot c_0)$, and $d_p = \rho(c_p)$ if $p > 0$. Then $p < q$ implies $d_p \lhd d_q$ as $d_p \le \lnot\rho(\lnot c_p)$. Moreover, $\lnot\rho(\lnot a) \le \lnot\rho(\lnot c_0) = d_0$. Consequently, $\lnot\rho(\lnot a) \prec \rho(b)$. Finally, for (M4), let $b \in \dv{A}$. Then $\rho(b) = \bigvee \{ \rho(a) \mid a \in I, a \lhd b\}$. However, if $a \in I$ and $a \lhd b$, then $a \prec b$ by Remark~\ref{rem: repeated}. Therefore, $\rho(b) = \bigvee \{ \rho(a) \mid a \prec b\}$, and so (M4) holds. Thus, $\rho$ is a de Vries morphism.

We next show that $\rho_*(X') \subseteq X$. If $x \in X'$, then there is $b \in I'$ with $x \in \zeta(b) \subseteq X'$. By (D4), there is $a \in I$ with $b \le \rho(a)$. Since $I$ is round, there is $c \in I$ with $a \prec c$. We have $b \in x$, so $\rho(a) \in x$. Therefore, $a \in \rho^{-1}(x)$, and so $c \in {\thu \rho^{-1}(x)} = \rho_*(x)$. Thus, $\rho_*(x) \in \zeta(c) \subseteq X$, as desired. The restriction of $\rho_*$ to $X'$ is then a well defined function $X' \to X$, and so there is a complete Boolean homomorphism $\sigma : \wp(X) \to \wp(X')$ given by $\sigma(S) = (\rho_*)^{-1}(S)$ for each $S \subseteq X$.
\[
\xymatrix@C5pc{
\dv{A} \ar[r]^{\alpha} \ar[d]_{\rho} & \wp(X) \ar[d]^{\sigma} \\
\dv{A'} \ar[r]_{\alpha'} & \wp(X')
}
\]
To see that $(\rho, \sigma)$ is a morphism in $\ldeve$, we must show that $\sigma \circ \alpha = \alpha' \star \rho$. Let $b \in \dv{A}$. Then
\begin{align*}
\sigma(\alpha(b)) &= \sigma(\zeta(b) \cap X) = (\rho_*)^{-1}(\zeta(b) \cap X) = \{ x \in X' \mid b \in \rho_*(x) \} \\
&= \{ x \in X' \mid \exists a \prec b : \rho(a) \in x\}.
\end{align*}
On the other hand, $(\alpha' \star \rho)(b) = \bigvee \{ \alpha'(\rho(a)) \mid a \prec b\}$. Now, for $a \prec b$, we have
\[
\alpha'(\rho(a)) = \zeta(\rho(a)) \cap X' = \{ x \in X' \mid \rho(a) \in x \}.
\]
Therefore, as the join in $\wp(X')$ is union,
\[
(\alpha'\star\rho)(b) = \{ x \in X' \mid \exists a\prec b : \rho(a) \in x\} = \sigma(\alpha(b)).
\]
This shows that $(\rho, \sigma)$ is a morphism in $\ldeve$. Since ${\sf D}(\rho, \sigma) = \rho$, we conclude that $\sf{D}$ is full. This completes the proof that $\sf{D}$ is part of a category equivalence between $\ldeve$ and $\Dim$.
\end{proof}

\def\cprime{$'$}
\providecommand{\bysame}{\leavevmode\hbox to3em{\hrulefill}\thinspace}
\providecommand{\MR}{\relax\ifhmode\unskip\space\fi MR }
\providecommand{\MRhref}[2]{%
  \href{http://www.ams.org/mathscinet-getitem?mr=#1}{#2}
}
\providecommand{\href}[2]{#2}

\bigskip

\noindent Department of Mathematical Sciences, New Mexico State University, Las Cruces NM 88003

\bigskip

\noindent guram@nmsu.edu, pmorandi@nmsu.edu, olberdin@nmsu.edu

\end{document}